%%%%%%%%%%%%%%%%%%%%%%%%%%%%%%%%%%%%%%%%%%%%%%%%%%%%%%%%%%%%%%%%%%%%%%%%%%%%%%%%
% Yacine Chitour, Guilherme Mazanti, Pierre Monmarch\'e, Mario Sigalotti       %
% 2018--2022                                                                   %
%%%%%%%%%%%%%%%%%%%%%%%%%%%%%%%%%%%%%%%%%%%%%%%%%%%%%%%%%%%%%%%%%%%%%%%%%%%%%%%%

\documentclass[a4paper, 12pt, twoside]{article}

\usepackage[top=3cm, bottom=3cm, left=3cm, right=3cm]{geometry}
\usepackage[pdftex]{graphicx}
\usepackage{tikz}
\usetikzlibrary{shapes, arrows, arrows.meta}
\usepackage{array}
\usepackage{amsfonts,amstext,amscd,bezier,amssymb}
\usepackage[centertags]{amsmath}
\usepackage{amsthm}
\usepackage{mathtools}
\usepackage[integrals]{wasysym}
\usepackage{stmaryrd}
\usepackage{bbm}
\usepackage[mathcal]{euscript}
\usepackage[utf8]{inputenc}
\usepackage[T1]{fontenc}
\usepackage[english]{babel}
\usepackage{cite}
\usepackage{mathptmx}
\usepackage[inline]{enumitem}
\usepackage{calc}
\usepackage{anyfontsize}

\usepackage[hidelinks, bookmarks, bookmarksnumbered, pdfstartview={XYZ null null 1.00}]{hyperref}

\theoremstyle{plain}
\newtheorem{theo}{Theorem}[section]
\newtheorem{lemm}[theo]{Lemma}
\newtheorem{coro}[theo]{Corollary}
\newtheorem{prop}[theo]{Proposition}
\theoremstyle{definition}
\newtheorem{defi}[theo]{Definition}
\newtheorem{remk}[theo]{Remark}
\newtheorem{expl}[theo]{Example}

\DeclareMathOperator{\spr}{\rho} % Spectral radius
\DeclareMathOperator{\co}{co} % Convex hull

\newcommand{\id}{\mathrm{Id}} % Identity matrix
\newcommand*\dd{\mathop{}\!\mathrm{d}}
\newcommand{\na}{\nabla}

\DeclarePairedDelimiter\abs{\lvert}{\rvert}
\DeclarePairedDelimiter\norm{\lVert}{\rVert}

\newcommand{\po}{\left(}
\newcommand{\pf}{\right)}
\newcommand{\cco}{\llbracket}
\newcommand{\ccf}{\rrbracket}
\newcommand{\R}{\mathbb R}
\newcommand{\N}{\mathbb N}  

\newcommand{\1}{\mathbbm{1}}
\newcommand{\Ebz}{\overline{\mathcal E}_z^{h+1}}

\numberwithin{figure}{section}
\numberwithin{equation}{section}

\setcounter{totalnumber}{50}
\setcounter{topnumber}{50}
\setcounter{bottomnumber}{50}

\let\originalleft\left
\let\originalright\right
\renewcommand{\left}{\mathopen{}\mathclose\bgroup\originalleft}
\renewcommand{\right}{\aftergroup\egroup\originalright}

%%%%%%%%%%%%%%%%%%%%%%%%%%%%%%%%%%%%%%%%%%%%%%%%%%%%%%%%%%%%%%%%%%%%%%%%%%%%%%%%
% DOCUMENT                                                                     %
%%%%%%%%%%%%%%%%%%%%%%%%%%%%%%%%%%%%%%%%%%%%%%%%%%%%%%%%%%%%%%%%%%%%%%%%%%%%%%%%
\begin{document}

\setlist[enumerate, 1]{label={\textnormal{(\alph*)}}, ref={(\alph*)}, leftmargin=0pt, itemindent=*}
\setlist[enumerate, 2]{label={\textnormal{(\roman*)}}, ref={(\roman*)}}

\title{On the gap between deterministic and probabilistic Lyapunov exponents for continuous-time linear systems}
\author{Yacine Chitour\footnote{Université Paris-Saclay, CNRS, CentraleSupélec, Laboratoire des signaux et systèmes, 91190, Gif-sur-Yvette, France.} \and Guilherme Mazanti\footnote{Université Paris-Saclay, CNRS, CentraleSupélec, Inria, Laboratoire des signaux et systèmes, 91190, Gif-sur-Yvette, France.} \and Pierre Monmarch\'e\footnote{Sorbonne Université, CNRS, Laboratoire Jacques-Louis Lions (LJLL), Laboratoire de Chi\-mie Thé\-o\-rique (LCT), F-75005 Paris, France} \and Mario Sigalotti\footnote{Sorbonne Université, Inria, CNRS, Laboratoire Jacques-Louis Lions (LJLL), F-75005 Paris, France}}

\maketitle

\begin{abstract}
Consider a non-autonomous continuous-time linear system in which the time-dependent matrix determining the dynamics is piecewise constant and takes finitely many values $A_1, \dotsc, A_N$. This paper studies the equality cases between the maximal Lyapunov exponent associated with the set of matrices $\{A_1, \dotsc, A_N\}$, on the one hand, and the corresponding ones for piecewise deterministic Markov processes with modes $A_1, \dotsc, A_N$, on the other hand. A fundamental step in this study consists in establishing a result of independent interest, namely, that any sequence of Markov processes associated with the matrices $A_1,\dotsc, A_N$ converges, up to extracting a subsequence, to a Markov process associated with a suitable convex combination of those matrices.
\end{abstract}

\noindent\textbf{Keywords.} Linear switched systems, continuous-time Markov processes, piecewise deterministic Markov processes, convexified Markov processes, Lyapunov exponents

\noindent\textbf{2020 Mathematics Subject Classification.} 60J25, 34A38, 34D08

\tableofcontents

\section{Introduction}

In this paper, we consider the family of non-autonomous continuous-time linear systems
\begin{equation}
\label{eq:switch}
\dot x(t) = A_{\sigma(t)} x(t),
\end{equation}
where $x(\cdot)$ takes values in $\mathbb R^d$, $\sigma(\cdot)$ is piecewise constant and takes values in the finite set of indices $\{1, \dotsc, N\}$, and we set $A$ to be the $N$-tuple made of 
$d \times d$ matrices with real coefficients 
 $A_1, \dotsc, A_N$ (also called the  \emph{modes} of \eqref{eq:switch}). Each \emph{signal} $\sigma$ corresponds to a possible evolution in time of a discrete parameter affecting the dynamics. This class of systems can be used to describe phenomena where different dynamical modes operate and the order in which they are active is not precisely known. In the engineering literature, such systems and their discrete-time counterparts bear the name of \emph{switched systems} \cite{Liberzon2003Switching} and they have been widely studied in the mathematical community since \cite{Daubechies1992Sets}.

One of the major issues regarding these systems concerns their asymptotic stability, uniformly with respect to 
the signal $\sigma$. Indeed, the fact that each individual mode is asymptotically stable does not imply that the trajectories of the corresponding switched system converge to $0$: it is easy to find two positive times $t_1, t_2$ and two matrices $A_1, A_2$ whose eigenvalues have negative real part such that the spectral radius of $e^{A_1 t_1} e^{A_2 t_2}$ is larger than $1$, as illustrated, for instance, in \cite{Liberzon2003Switching}. The measure of stability of a switching system with respect to \emph{all} possible signals $\sigma$ is characterized by its \emph{deterministic maximal Lyapunov exponent} $\lambda_{\mathrm d}(A)$, measuring the maximal asymptotic exponential rate of \eqref{eq:switch} (see \eqref{eq:defLambdaD} below). 

A difficulty in dealing with $\lambda_{\mathrm d}(A)$ is that the characterization of the asymptotic behavior of \eqref{eq:switch} based on its value is in general conservative from a practical viewpoint, since it corresponds to a maximization with respect to all possible signals. In the (few) cases where maximizing signals (or maximizing sequences of signals) are known, they happen to have a very specific structure, for instance switching between modes at precise times or at a fast rate \cite{Balde2009Note}. In many situations, one disposes of additional information on the signal $\sigma$ implying that such specific structures occur rarely, at least in a probabilistic sense. This motivates addressing the measure of stability of switched systems within a probabilistic framework, a question which has been considered in the literature, for instance in \cite{Benaim2014Stability} for systems in dimension $d = 2$. One may naturally expect that, except for some very particular situations, such a probabilistic framework  gives rise to less conservative measures of stability of \eqref{eq:switch}, as it is indeed observed for the two-dimensional systems considered in \cite{Benaim2014Stability}.

An important class of switched systems with random switching is that of piecewise deterministic Markov processes (PDMPs) introduced in \cite{Davis1984Piecewise}, which provide a natural modeling framework for phenomena of random switching without memory and corresponds to considering $\sigma$ as a continuous-time Markov process. In that case, the asymptotic behavior of \eqref{eq:switch} can be studied through a \emph{probabilistic Lyapunov exponent} $\lambda_{\mathrm p}(\nu, \mu, P, A)$, which consists of the expected value with respect to $\sigma$ of the asymptotic exponential rate of \eqref{eq:switch}, where $(\nu, \mu, P)$ are the parameters of the Markov process as described in Section~\ref{SecLambdaP} below. Recall that, due to the classical result in \cite{Furstenberg1960Products}, under generic assumptions on the Markov process, the asymptotic exponential rate for a given $\sigma$ is equal to $\lambda_{\mathrm p}(\nu, \mu, P, A)$ almost surely.

In applications, the quantity $\lambda_{\mathrm p}(\nu, \mu, P, A)$ is a suitable measure of asymptotic behavior if the parameters $(\nu, \mu, P)$ of the Markov process are fixed. However, it is also important to consider situations in which such parameters are not known exactly, and, for that purpose, we introduce the quantity $\lambda_{\mathrm p}^{\sup}(A)$ as the supremum of $\lambda_{\mathrm p}(\nu, \mu, P, A)$ with respect to all Markov processes $(\nu, \mu, P)$, which corresponds to a worst-case scenario.

Clearly, for every Markov process $(\nu, \mu, P)$ and every $N$-tuple of matrices $A$, one has $\lambda_{\mathrm p}(\nu, \mu, P, A) \leqslant \lambda_{\mathrm d}(A)$, and hence one also has $\lambda_{\mathrm p}^{\sup}(A) \leqslant \lambda_{\mathrm d}(A)$. The goal of this paper is to investigate under which conditions on $A$ the probabilistic point of view is strictly less restrictive than the deterministic one, i.e., to characterize in terms of $A$ the strict inequality (or, equivalently, the equality) between $\lambda_{\mathrm p}(\nu, \mu, P, A)$ and $\lambda_{\mathrm d}(A)$ for a fixed $(\nu, \mu, P)$ and also between $\lambda_{\mathrm p}^{\sup}(A)$ and $\lambda_{\mathrm d}(A)$. Hence, this paper is the continuous-time counterpart of \cite{Chitour2021Gap}, in which similar issues have been addressed for discrete-time systems.

Our main results in that sense, Theorems~\ref{TheoPNotSC-ContinuousTime} and \ref{thm:lambda-p-conv}, show that probabilistic measures of stability of \eqref{eq:switch} are indeed less restrictive than the deterministic ones, except for some particular situations that we characterize. For instance, under additional irreducibility and strong connectedness assumptions, Theorem~\ref{TheoPNotSC-ContinuousTime} reduces to Proposition~\ref{PropFixedP-ContinuousTime}, which states that $\lambda_{\mathrm p}(\nu, \mu, P, A)$ is strictly smaller than $\lambda_{\mathrm d}(A)$ except for the particular situation in which the matrices $A_1, \dotsc, A_N$ are skew-symmetric up to a common translation by a multiple of the identity matrix and a common change of basis. Up to a technical assumption which is always satisfied in dimension $d \leqslant 3$, Theorem~\ref{thm:lambda-p-conv} states that the probabilistic measure of stability $\lambda_{\mathrm p}^{\sup}(A)$ is strictly smaller than $\lambda_{\mathrm d}(A)$ except for the particular situation where the worst possible deterministic behavior is attained by a matrix in the convex hull of $A_1, \dotsc, A_N$. In particular, these results confirm that probabilistic measures of stability are most often less conservative than their deterministic counterparts, as previously observed in \cite{Benaim2014Stability} for two-dimensional systems.

The characterization of equality between $\lambda_{\mathrm p}(\nu, \mu, P, A)$ and $\lambda_{\mathrm d}(A)$ follows essentially the same lines as the corresponding problem in discrete time addressed in \cite{Chitour2021Gap}. We first consider the case where the $N$-tuple $A$ is irreducible and the matrix $P$ is strongly connected (as defined in Section~\ref{SecDefinitions}): it is shown that equality occurs if and only if the matrices $A_i - \lambda_{\mathrm d}(A) \id$ are skew-symmetric (up to a common change of basis), a result that relies on the use of an extremal norm for $A$ (Definition~\ref{DefiExtremal}) and the characterization of semigroups with constant spectral radius from \cite{Protasov2017Matrix}. We then treat the general case by decomposing $A$ into irreducible blocks and $P$ into strongly connected blocks.

As regards the question of equality between $\lambda_{\mathrm p}^{\sup}(A)$ and $\lambda_{\mathrm d}(A)$, we show that it implies that $\lambda_{\mathrm d}(A)$ is equal to the maximum of the real part of the eigenvalues of some matrix $M$ belonging to the convex hull of $A$, and that the converse is true under a technical assumption on $A = (A_1, \dotsc, A_N)$ and $M$, cf.\ Definition~\ref{defi:cond-C}, which is always satisfied in dimension $d \leqslant 3$. We conjecture that this technical assumption is not necessary to get the converse implication. The difficulty in removing the technical assumption consists in proving the convergence of the probabilistic Lyapunov exponent when the jump rate goes to $+\infty$.

Indeed, our analysis relies on the investigation of the behavior of maximizing sequences of Markov processes for $\lambda_{\mathrm p}^{\sup}(A)$. In the discrete-time setting considered in \cite{Chitour2021Gap}, the issue is easily handled thanks to the compactness of the space of discrete-time Markov processes. This is not anymore the case in the continuous-time setting, where the situation is much more delicate since switching between modes can occur arbitrarily fast. When the transition matrix $P$ is fixed and strongly connected and the jump rate $\mu$ goes to infinity, it is well-known that high-frequency jumps lead to deterministic averaging \cite{benaim2019, Benaim2014Stability}, namely the Markov process converges to a deterministic motion $\dot x = M x$, where $M$ belongs to the convex hull of $A$. More generally (if $P$ is not fixed as $\mu\rightarrow +\infty$), in case of a combination of fast and slow jumps, one can expect the convergence towards a Markov process on convex combinations of matrices of $A$.

We rigorously handle such a decomposition of Markov processes in different ti\-me\-sca\-les by relying on results from various works by Landim and collaborators, in particular \cite{Landim2016}, although these works are primarily interested in metastability phenomena, i.e., Markov chains for which the different timescales are all slow, instead of fast as in our case. As a consequence, we prove that we can extract from any sequence of Markov processes with modes in $A$ a subsequence that converges in law to a Markov process associated with a suitable convex combinations of the original matrices in $A$. This compactification result, Theorem~\ref{Thm:CV-mu-pas-borne}, is one of our main results.

Since the convergence in law obtained in Theorem~\ref{Thm:CV-mu-pas-borne} is not uniform in time, it is not sufficient to deduce convergence of the Lyapunov exponents of the sequence of Markov processes to the Lyapunov exponent of the limit process. Such a convergence property is interesting in itself and has already been addressed in particular cases, e.g.\ \cite[Section~2.5]{Benaim2014Stability} (where $N = d = 2$ and the matrices are Hurwitz) or \cite[Corollary~2.15]{benaim2019} (where the matrix $M$ appearing in the limit is  Metzler and strongly connected). We prove in Proposition~\ref{prop:converge} a result in that direction under the already mentioned additional technical assumption on $A$ and $M$. Establishing such a result unconditionally would provide a complete characterization of the equality between $\lambda_{\mathrm d}(A)$ and $\lambda_{\mathrm p}^{\sup}(A)$.  

As a conclusion, our results show that the equality between the deterministic maximal Lyapunov exponent and the probabilistic ones (either for a fixed Markov chain or for the worst probabilistic case) only occurs in very specific cases. This shows that, in most cases, working within a probabilistic framework yields a less conservative estimate, closer to the stability properties most commonly observed in practice.

The paper is organized as follows. Section~\ref{SecDefinitions} collects definitions, notations, and basic facts relative to deterministic and probabilistic Lyapunov exponents as well as continuous-time Markov processes. The statements of the main results proved in the paper are presented in Section~\ref{sec:main-results}. Section~\ref{SecCharactFixedMarkov} addresses the characterization of equality between $\lambda_{\mathrm p}(\nu, \mu, P, A)$ and $\lambda_{\mathrm d}(A)$. We describe in Section~\ref{sec:compactification} the compactification of the space of Markov processes, which is used in Section~\ref{SecCharactSupMarkov} to study the case of equality between $\lambda_{\mathrm p}^{\sup}(A)$ and $\lambda_{\mathrm d}(A)$. The paper is completed by two appendices. In Appendix~\ref{Sec:MarkovDecomposition}, we prove a general result of decomposition into different timescales for a sequence of Markov chains on a finite state space, adapted from \cite{Landim2016}, which is a central tool in the proofs of Section~\ref{sec:compactification}. Appendix~\ref{app:B} provides the proofs of some linear-algebraic technical results used in Section~\ref{SecCharactSupMarkov}.

\section{Definitions, notations, and basic facts}
\label{SecDefinitions}

Throughout the paper, $d$ and $N$ belong to $\mathbb N$, which is used to denote the set of positive integers. If $a$ and $b$ are integers, $\llbracket a, b \rrbracket$ denotes the set of integers $j$ such that $a \leqslant j \leqslant b$. We use $\abs{\cdot}$ to denote a norm in $\mathbb R^d$ and $\norm{\cdot}$ to denote the corresponding induced norm on the space $\mathcal M_d(\mathbb R)$ of $d \times d$ matrices with real coefficients. The identity matrix in $\mathcal M_d(\mathbb R)$ is denoted by $\id$. The spectral radius of a square matrix $M$ is denoted by $\spr(M)$, and its spectral abscissa, defined as the maximum of the real parts of its eigenvalues, is denoted by $\lambda(M)$. An $N$-tuple $A = (A_1, \dotsc, A_N) \in \mathcal M_d(\mathbb R)^N$ is said to be \emph{irreducible} if the only invariant subspaces by all $A_i$ are $\{0\}$ and $\mathbb R^d$. If $\mathcal A \subset \mathcal M_d(\mathbb R)$, we use $\co(\mathcal A)$ to denote the convex hull of $\mathcal A$.

Denote by $\Sigma$ the set of all piecewise constant right-continuous functions defined on $[0,\infty)$ and taking values in $\llbracket 1, N\rrbracket$. Given $\sigma\in\Sigma$, we use $t \mapsto \Phi_\sigma(t)$ to denote the flow of
\[\dot x = A_{\sigma(t)} x\]
with $\Phi_\sigma(0) = \id$. In particular,
\[\Phi_\sigma(t_n) = e^{A_{\sigma(t_{n-1})} (t_n-t_{n-1})} \dotsm e^{A_{\sigma(0)} t_1},\]
where $(t_i)_{i \in \mathbb N}$ is an increasing sequence containing all discontinuity times of $\sigma$.

\subsection{Deterministic Lyapunov exponent}

Let $A = (A_1, \dotsc, A_N) \in \mathcal M_d(\mathbb R)^N$. The \emph{deterministic Lyapunov exponent} $\lambda_{\mathrm d}(A)$ associated with $A$ is defined as
\begin{equation}
\label{eq:defLambdaD}
\lambda_{\mathrm d}(A) =\limsup_{t \to \infty} \frac{1}{t}  \sup_{\sigma\in \Sigma} \log\norm*{\Phi_\sigma(t)}.
\end{equation}
Since all norms in $\mathbb R^d$ are equivalent, it immediately follows that $\lambda_{\mathrm d}(A)$ does not depend on the specific choice of $\norm{\cdot}$. It turns out (see, e.g., \cite[Lemma~1.2]{Jungers2009Joint}) that, since $\norm{\cdot}$ is submultiplicative on $\mathcal M_d(\mathbb R)$, one has
\begin{equation}
\label{eq:LyapDeterm}
\lambda_{\mathrm d}(A) = \lim_{t \to \infty} \frac{1}{t}\sup_{\sigma\in \Sigma} \log\norm*{\Phi_\sigma(t)} = \inf_{t > 0} \frac{1}{t}\sup_{\sigma\in \Sigma} \log\norm*{\Phi_\sigma(t)}.
\end{equation}
Moreover, for every $\sigma \in \Sigma$ and $t > 0$, one has
\begin{equation}
\label{LambdaDGeqLambdaWord}
\frac{1}{t}\log\spr(\Phi_\sigma(t)) \leqslant \lambda_{\mathrm d}(A).
\end{equation}
Indeed, let $\hat \sigma \in \Sigma$ be the $t$-periodic signal coinciding with $\sigma$ on the interval $[0, t)$. Then, for every $k \in \mathbb N$,
\begin{align*}
\frac{1}{t}\log\spr(\Phi_\sigma(t)) & = \frac{1}{k t}\log\spr(\Phi_{\hat\sigma}(k t))  \leqslant \frac{1}{k t}\log\norm*{\Phi_{\hat\sigma}(k t)} \displaybreak[0] \\ 
& \leqslant \frac{1}{k t} \sup_{\varsigma \in \Sigma} \log\norm*{\Phi_{\varsigma}(k t)},
\end{align*}
and we conclude from \eqref{eq:defLambdaD} by taking the $\limsup$ as $k \to +\infty$.

\begin{defi}[Extremal norm]
\label{DefiExtremal}
Let $A = (A_1, \dotsc, A_N) \in \mathcal M_d(\mathbb R)^N$. A norm $\norm{\cdot}_{\mathrm e}$ in $\mathcal M_d(\mathbb R)$ is said to be 
\emph{extremal for $A$} if, for every $\sigma \in \Sigma$ and $t\geqslant 0$, it holds $\norm{\Phi_\sigma(t)}_{\mathrm e} \leqslant e^{\lambda_{\mathrm d}(A) t}$.
\end{defi}

\begin{remk}
\label{RemkNondefective}
A necessary and sufficient condition for the existence of an extremal norm for a given $A = (A_1, \dotsc, A_N)$ is the \emph{nondefectiveness} of $A$, i.e., the existence of $C > 0$ such that $\norm{\Phi_\sigma(t)} \leqslant C e^{\lambda_{\mathrm d}(A) t}$ for every $t \geqslant 0$ and $\sigma \in \Sigma$ (see, e.g., \cite[Theorem~2.2]{Jungers2009Joint} for the discrete-time case, which extends readily to the continuous-time setting).

Note that, since the computation of $\lambda_{\mathrm d}(A)$ is intractable in general (cf.\ \cite{Jungers2009Joint}), nondefectiveness turns out to be also difficult to check. This motivates the search for simpler conditions implying the nondefectiveness of a family of matrices $A$. One such condition is the irreducibility of $A$ (see, e.g., \cite{Wirth2002Generalized}, where it shown that irreducibility actually implies the existence of a so-called Barabanov norm, which is
an extremal norm satisfying some additional properties).
\end{remk}

\subsection{Continuous-time Markov processes}

In this paper, we consider continuous-time Markov processes in $\llbracket 1, N\rrbracket$ defined by triples $(\nu, \mu, P)$, where $P = (p_{ij})_{i, j = 1}^N \in \mathcal M_N(\mathbb R)$ is a stochastic matrix, $\mu > 0$, and $\nu \in \mathbb R^N$ is a probability vector, seen as a row vector, i.e., as a $1 \times N$ matrix. The Markov process corresponding to $(\nu, \mu, P)$, denoted by $\sigma$, is the continuous-time Markov chain on $\llbracket 1, N\rrbracket$ with initial law $\nu$, transition matrix $P$, and jump rate $\mu$. Hence, if $\sigma(t) = i \in \llbracket 1, N\rrbracket$ and $t^\prime$ is the next jump time, then $t^\prime - t$ follows an exponential law of parameter $\mu$ and $\sigma(t^\prime) = j \in \llbracket 1, N\rrbracket$ with probability $p_{ij}$. Note that trivial jumps (i.e., from a state to itself) are allowed and that, for every $\alpha \in(0,1]$, both triples of parameters $(\nu, \mu/\alpha, \id + \alpha(P - \id))$ and $(\nu, \mu, P)$ determine Markov processes with the same law.

When $\sigma(\cdot)$ and $\nu$ are, respectively, a Markov chain and a probability vector on $\cco 1,N\ccf$, we occasionally denote by $\mathbb P_{\nu}$ and $\mathbb E_{\nu}$ probabilities and expectations to indicate that the law of the initial condition $\sigma(0)$ is $\nu$. If $\nu=\delta_i$ for some $i\in\cco1,N\ccf$, we simply write $\mathbb P_i$ and $\mathbb E_i$.

Given $A = (A_1, \dotsc, A_N) \in \mathcal M_d(\mathbb R)^N$ and $x_0 \in \mathbb R^d$, the above Markov process $\sigma$ in $\llbracket 1, N\rrbracket$ induces the stochastic processes $A_{\sigma(\cdot)}$ in $\mathcal M_d(\mathbb R)$ and  $\Phi_{\sigma}(\cdot) x_0$ in $\mathbb R^d$. When clear from the context, we still identify such processes with the triple $(\nu, \mu, P)$. The matrices $A_1, \dotsc, A_N$ are called the \emph{modes} of the Markov process and, for distinct $i, j \in \llbracket 1, N\rrbracket$, $\lambda(i, j) = \mu p_{ij}$ is the \emph{jump rate from $i$ to $j$}. Notice that, although $x(\cdot)=\Phi_{\sigma}(\cdot) x_0$ is not a Markov process by itself, this is the case for $(x(\cdot),\sigma(\cdot))$, which is a PDMP. However, with a slight abuse of language, we will sometimes refer to $x(\cdot)$ as a Markov process for $A$.

We say that a stochastic matrix $P$ is \emph{strongly connected} if it is not similar via a permutation to a nontrivial block upper triangular matrix, i.e., if its associated directed graph is strongly connected. (Such a matrix is usually called \emph{irreducible}, but we already use the latter term in its linear algebraic meaning.) More generally, every stochastic matrix $P \in \mathcal M_N(\mathbb R)$ admits, up to a permutation in the set of indices $\llbracket 1, N\rrbracket$, the decomposition into strongly connected blocks (see, e.g., \cite{Seneta2006Nonnegative}) given by
\begin{equation}
\label{DecomposeP}
P = \begin{pmatrix}
P_{1} & 0 & \cdots 
& \cdots & 0 \\
0& \ddots & \ddots 
& &\vdots \\
\vdots & \ddots & \ddots&\ddots&\vdots\\
0& \cdots & 0& P_R 
& 0 \\
* & \cdots &\cdots & * & Q\\
\end{pmatrix},
\end{equation}
where $\spr(Q) < 1$ and, for $i \in \llbracket 1, R\rrbracket$, $P_i \in \mathcal M_{n_i}(\mathbb R)$ is a stochastic and strongly connected matrix for some positive integers $R, n_1, \dotsc, n_R$. For every $i \in \llbracket 1, R\rrbracket$, we define the recurrence class $\mathcal I(i)$ for $P$ by
\begin{equation}
\label{eq:defi-Ii}
\mathcal I(i) = \llbracket n_1 + \dotsb + n_{i-1} + 1, n_1 + \dotsb + n_i\rrbracket,
\end{equation}
and the set of transient states by $\mathcal T=\cco n_1+\dots+n_R+1,N\ccf$ (possibly empty). Given a probability vector $\nu \in \mathbb R^N$, a recurrence class $\mathcal I(i)$ is said to be \emph{accessible from $\nu$} if $\mathbb P_{\nu}(\sigma(\tau) \in \mathcal I(i))>0$, where $\tau = \inf\{t\geqslant 0 \mid \sigma(t)\notin \mathcal T\}$.

A probability vector $\nu\in\R^N$ is said to be \emph{invariant for $P$} if $\nu P = \nu$. When $P$ is strongly connected, such an invariant law exists and is unique. More generally, considering a decomposition in strongly connected blocks \eqref{DecomposeP}, we  let $\nu^{[i]}$ be the unique invariant probability vector for $P_i$, which is canonically extended (adding zero components) to a vector in $\mathbb R^N$ still denoted by $\nu^{[i]}$. Then every invariant probability vector $\nu \in \mathbb R^N$ of $P$ can be uniquely decomposed as
\begin{equation}
\label{DecomposeNu}
\nu = \sum_{i=1}^R \alpha_i \nu^{[i]},
\end{equation}
where $\alpha_1, \dotsc, \alpha_R \in [0, 1]$ and $\sum_{i=1}^R \alpha_i = 1$. This means that the corresponding Markov process $\sigma$ starts in the recurrence class $\mathcal I(i)$ with probability $\alpha_i$, for $i \in \llbracket 1, R\rrbracket$, and remains there for all positive times.

\subsection{Probabilistic Lyapunov exponent}
\label{SecLambdaP}

Let $(\nu, \mu, P)$ define a Markov process in $\Sigma$ and $A \in \mathcal M_d(\mathbb R)^N$. The \emph{probabilistic Lyapunov exponent} for $(\nu, \mu, P, A)$ is the quantity defined by
\begin{equation}
\label{eq:defLambdaP}
\lambda_{\mathrm p}(\nu, \mu, P, A) = \limsup_{t \to +\infty} \frac{1}{t} \mathbb E[\log\norm{\Phi_\sigma(t)}],
\end{equation}
where $\mathbb E$ denotes the expectation with respect to $\sigma$ distributed according to $(\nu, \mu, P)$. As for $\lambda_{\mathrm d}(A)$, the above expression is independent on the choice of the specific induced norm $\norm{\cdot}$ in $\mathcal M_d(\mathbb R)$. 

Given $A \in \mathcal M_d(\mathbb R)^N$, we denote by $\lambda_{\mathrm p}^{\sup}(A)$ the supremum of $\lambda_{\mathrm p}(\nu, \mu, P, A)$ over all parameters $(\nu, \mu, P)$. 
For every Markov process $(\nu, \mu, P)$, we have
\begin{equation}
\label{LambdaPLeqLambdaD-Continuous}
\lambda_{\mathrm p}(\nu, \mu, P, A) \leqslant \lambda_{\mathrm p}^{\sup}(A) \leqslant \lambda_{\mathrm d}(A),
\end{equation}
since, for fixed $t > 0$,
\[\mathbb E[\log\norm{\Phi_\sigma(t)}] \leqslant \sup_{\sigma' \in \Sigma} \log\norm{\Phi_{\sigma'}(t)}.\]

Provided that $\nu$ is invariant for $P$,  by classical subadditivity arguments on the function $t \mapsto \mathbb E[\log\norm{\Phi_\sigma(t)}]$, one has
\begin{equation}
\label{eq:LyapProba}
\lambda_{\mathrm p}(\nu, \mu, P, A) = \lim_{t \to +\infty} \frac{1}{t} \mathbb E[\log\norm{\Phi_\sigma(t)}] = \inf_{t > 0} \frac{1}{t} \mathbb E[\log\norm{\Phi_\sigma(t)}].
\end{equation}
In fact, the initial condition $\nu$ is not very important, due to the Markov property and the ergodic behaviour within each recurrent class. More precisely, we can state the following.

\begin{prop}
\label{prop:condition-initiale}
Consider the decomposition \eqref{DecomposeP} and let $\tau = \inf\{t\geqslant 0 \mid \sigma(t)\notin \mathcal T\}$ be the first time at which $\sigma$ reaches a recurrent class. Then, for each probability vector $\nu\in\R^N$,
\[\lambda_{\mathrm p}(\nu,\mu,P,A) \ = \ \sum_{i=1}^R \mathbb P_{\nu} \po \sigma(\tau) \in \mathcal I(i)\pf \lambda_{\mathrm p}(\nu^{[i]},\mu,P,A)\,.\]
\end{prop}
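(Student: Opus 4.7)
The plan is to split the trajectory at the hitting time $\tau$ of the recurrent states, apply the strong Markov property at $\tau$, and establish an ergodic-type auxiliary claim showing that within each recurrence class $\mathcal I(i)$ the probabilistic Lyapunov exponent does not depend on the initial state and equals $\lambda_{\mathrm p}(\nu^{[i]}, \mu, P, A)$.

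First, since $\spr(Q) < 1$, the hitting time $\tau$ has exponential tails, so in particular $\mathbb E_\nu[\tau] < \infty$. The cocycle identity $\Phi_\sigma(t) = \Phi_{\theta_\tau \sigma}(t - \tau)\, \Phi_\sigma(\tau)$ (valid on $\{\tau \leq t\}$), combined with the deterministic bounds $\norm{\Phi_\sigma(\tau)^{\pm 1}} \leq e^{C\tau}$ where $C = \max_{1 \le i \le N} \norm{A_i}$, yields $\abs{\log\norm{\Phi_\sigma(t)} - \log\norm{\Phi_{\theta_\tau\sigma}(t-\tau)}} \leq C\tau$. Dividing by $t$ and letting $t \to \infty$ eliminates the segment $[0, \tau]$, so that
\[\lambda_{\mathrm p}(\nu, \mu, P, A) = \limsup_{t \to \infty} \frac{1}{t} \mathbb E_\nu\left[\log\norm{\Phi_{\theta_\tau \sigma}(t - \tau)}\, \mathbbm 1_{\tau \leq t}\right].\]

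The key auxiliary step is to establish that for every $i \in \cco 1, R\ccf$ and every $j \in \mathcal I(i)$,
\[\lim_{t \to \infty} \frac{1}{t} \mathbb E_j[\log\norm{\Phi_\sigma(t)}] = \lambda_{\mathrm p}(\nu^{[i]}, \mu, P, A). \qquad (\star)\]
The existence of this limit does not follow directly from the subadditivity argument of \eqref{eq:LyapProba}, since $\delta_j$ need not be invariant for $P$. I would couple a chain $\sigma^j$ starting from $j$ with a chain $\sigma^k$ starting from some $k \in \mathcal I(i)$ via the hitting time $T_{jk} = \inf\{t \geq 0 : \sigma^j(t) = k\}$: by the strong Markov property, one may assume $\sigma^j(T_{jk} + s) = \sigma^k(s)$ for all $s \geq 0$. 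Factoring $\Phi_{\sigma^j}(t) = \Phi_{\sigma^k}(t - T_{jk}) \Phi_{\sigma^j}(T_{jk})$ and comparing $\Phi_{\sigma^k}(t-T_{jk})$ with $\Phi_{\sigma^k}(t)$ through the flow over $[t-T_{jk}, t]$ gives $\abs{\log\norm{\Phi_{\sigma^j}(t)} - \log\norm{\Phi_{\sigma^k}(t)}} \leq 2CT_{jk}$ on $\{T_{jk} \leq t\}$, while on the complementary event the difference is at most $2Ct$ multiplied by an exponentially small probability — strong connectedness of $P_i$ ensures exponential tails for $T_{jk}$. Taking expectations, $\abs{\mathbb E_j[\log\norm{\Phi_\sigma(t)}] - \mathbb E_k[\log\norm{\Phi_\sigma(t)}]} = O(1)$ uniformly in $t$. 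Averaging $\mathbb E_{\nu^{[i]}} = \sum_{k \in \mathcal I(i)} \nu^{[i]}_k \mathbb E_k$ and dividing by $t$ then yields $(\star)$.

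To conclude, I apply the strong Markov property at $\tau$:
\[\mathbb E_\nu\!\left[\log\norm{\Phi_{\theta_\tau \sigma}(t-\tau)} \mathbbm 1_{\tau \leq t} \mid \mathcal F_\tau\right] = \mathbb E_{\sigma(\tau)}\!\left[\log\norm{\Phi_\sigma(t-\tau)}\right] \mathbbm 1_{\tau \leq t},\]
split the outer expectation according to which recurrence class $\sigma(\tau)$ falls in, and apply dominated convergence. The integrand is bounded in absolute value by $C$ and, by $(\star)$, converges pointwise to $\lambda_{\mathrm p}(\nu^{[i]}, \mu, P, A) \mathbbm 1_{\sigma(\tau) \in \mathcal I(i)}$ as $t \to \infty$, yielding the claimed identity. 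The main obstacle is precisely the auxiliary claim $(\star)$, whose proof crucially exploits the strong connectedness of each $P_i$ in order to get the exponential hitting-time tails that make the coupling cheap compared to the time horizon.
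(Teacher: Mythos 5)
Your proof is correct, but it follows a genuinely different route from the paper's. You split the trajectory at the random hitting time $\tau$, invoke the strong Markov property there, and then prove the auxiliary claim $(\star)$ that within a recurrence class the normalized expectation $\frac1t\mathbb E_j[\log\norm{\Phi_\sigma(t)}]$ converges to $\lambda_{\mathrm p}(\nu^{[i]},\mu,P,A)$ for every starting state $j\in\mathcal I(i)$, via a hitting-time coupling whose cost is $O(1)$ uniformly in $t$ thanks to the exponential tails of $T_{jk}$; dominated convergence then finishes the argument. The paper instead never localizes at $\tau$: it compares the law $\nu(t_0)=\nu e^{t_0\mu(P-\id)}$ with the limiting invariant law $\tilde\nu=\sum_i\alpha_i\nu^{[i]}$ in total variation at a fixed deterministic time $t_0$, couples two chains to agree for all times with probability at least $1-\varepsilon/2$, and absorbs the discrepancy into an error $\varepsilon K(t-t_0)+Kt_0$ before letting $\varepsilon\to0$; the decomposition over classes is then the exact identity $\mathbb E_{\tilde\nu}[\log\norm{\Phi_\sigma(t)}]=\sum_i\alpha_i\mathbb E_{\nu^{[i]}}[\log\norm{\Phi_\sigma(t)}]$, valid for every $t$, combined with the existence of the limits in \eqref{eq:LyapProba} for invariant initial laws. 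Your approach costs more (the extra ergodic lemma $(\star)$ and the exponential-tail estimates) but buys more: it shows the exponent is the same from every individual state of a recurrence class, with an $O(1/t)$ error rate, rather than only from the averaged law $\tilde\nu$ up to an arbitrarily small linear-in-$t$ error. The paper's argument is shorter because it works only with one-dimensional marginals and the standard convergence $\nu(t)\to\tilde\nu$. Both are complete; the only points worth making explicit in yours are that $\mathbb E_\nu[\tau]<\infty$ and $\mathbb P_\nu(\tau>t)$ decays exponentially (which you do note via $\spr(Q)<1$), and that the domination bound $\abs{\log\norm{\Phi_\sigma(s)}}\leqslant Cs$ uses that $\norm{\cdot}$ is an induced norm.
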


\begin{proof}
Let $K=\max_{i\in\cco 1,N \ccf}\|A_i\|$. Then, for all $\sigma\in\Sigma$ and all $t,s\geqslant 0$,
\begin{equation}\label{eq:phi}
   e^{-Kt}\|\Phi_{\sigma}(s)\|  \ \leqslant \ \|\Phi_{\sigma}(t+s)\| \ \leqslant \ e^{Kt}\|\Phi_{\sigma}(s)\| \,. 
\end{equation}
Fix a probability vector $\nu\in\R^N$ and let  $\alpha_i = \mathbb P_{\nu} \po \sigma(\tau) \in \mathcal I(i)\pf$ for $i\in\cco 1,R\ccf$ and $\tilde \nu = \sum_{i=1}^R \alpha_i \nu^{[i]}$. By standard arguments on Markov chains, denoting $\nu(t) = \nu e^{t \mu (P-I)}$ the law at time $t$ of a chain associated with $P$ with initial condition $\nu$, then $\nu(t)$ converges to $\tilde \nu$ as $t\rightarrow +\infty$. For an arbitrary $\varepsilon>0$, let $t_0$ be such that the total variation norm of $\nu(t_0)-\tilde \nu$ is less than  $\varepsilon$. It means that there exist random variables $\sigma_0,\sigma'_0$ on $\cco 1,N\ccf$ respectively distributed according to $\nu(t_0)$ and $\tilde \nu$ such that $\mathbb P(\sigma_0 \neq \sigma_0') \leqslant \varepsilon/2$. Considering two chains $\sigma$ and $\sigma'$ with respective initial conditions $\sigma_0$ and $\sigma_0'$ and such that, conditionally to $\{\sigma_0=\sigma_0'\}$, $\{\sigma(t)=\sigma'(t)\ \forall t\geqslant 0\}$, we get that, for all $t\geqslant t_0$, 
\[\left|  \mathbb E_{\nu(t_0)} \po \log\|\Phi_{\sigma}(t-t_0)\|\pf \ - \ \mathbb E_{\tilde \nu} \po \log\|\Phi_{\sigma}(t-t_0)\|\pf\right| \  \leqslant \ \varepsilon K(t-t_0)\,.\] 
From \eqref{eq:phi},
\[\left| \log \|\Phi_{\sigma}(t)\| - \log \|\Phi_{\sigma(t_0+\cdot)}(t-t_0)\| \right| \leqslant Kt_0\,.\]
Thus, using the Markov property, for all $t\geqslant t_0$,
\[\left|  \mathbb E_{\nu} \po \log\|\Phi_{\sigma}(t)\|\pf \ - \ \mathbb E_{\nu(t_0)} \po \log\|\Phi_{\sigma}(t-t_0)\|\pf\right| \  \leqslant \   Kt_0\,.\] 
Combining these two bounds, dividing by $t$, taking the $\limsup$ as $t\rightarrow +\infty$, and using that $\varepsilon$ is arbitrary, we get that 
\[\lambda_{\mathrm p}(\nu,\mu,P,A) \ = \ \lambda_{\mathrm p}(\tilde\nu,\mu,P,A)\,.\]
Besides, conditioning with respect to the recurrence class of the initial condition, we immediately get that 
\[\mathbb E_{\tilde \nu} \po \log\|\Phi_{\sigma}(t)\|\pf \ = \ \sum_{i=1}^R  \alpha_i \mathbb E_{\nu^{[i]}} \po \log\|\Phi_{\sigma}(t)\|\pf  \]
for all $t\geqslant 0$. Dividing by $t$ and letting $t\rightarrow +\infty$ yield the conclusion.
\end{proof}

{\sloppy

From Proposition~\ref{prop:condition-initiale}, we deduce that $\lambda_{\mathrm p}(\nu,\mu,P,A) \leqslant \max_{i\in\cco 1,R\ccf} \lambda_{\mathrm p}(\nu^{[i]},\mu,P,A)$. When $\sigma(0)$ is distributed according to $\nu^{[i]}$ for some $i\in\cco 1,R\ccf$, $\sigma$ stays for all times in the class $\mathcal I(i)$, where $P_i$ is strongly connected. Since we are interested in maximal values of the Lyapunov exponent, we are going to use several times in what follows Proposition~\ref{prop:condition-initiale} to reduce to the strongly connected case.

}

\section{Statements of the main results}
\label{sec:main-results}

Our main result concerning equality between $\lambda_{\mathrm p}(\nu, \mu, P, A)$ and $\lambda_{\mathrm d}(A)$ for a given Markov process $(\nu, \mu, P)$ and a given $A \in \mathcal M_d(\mathbb R)^N$ is the following.

\begin{theo}
\label{TheoPNotSC-ContinuousTime}
Let $P \in \mathcal M_{N}(\mathbb R)$ be a stochastic matrix, $\nu$ be a probability vector of $\R^N$, $\mu>0$, and $A = (A_1, \dotsc,\allowbreak A_N) \allowbreak \in \mathcal M_d(\mathbb R)^N$. Then the following statements are equivalent:
\begin{enumerate}
\item\label{TheoPNotSC-ContinuousTime-Equality} $\lambda_{\mathrm d}(A) = \lambda_{\mathrm p}(\nu, \mu, P, A)$.
\item\label{TheoPNotSC-ContinuousTime-Spr} For every  recurrent class $\mathcal I$ of $P$ which  is accessible from $\nu$, every $k\in \N$, $i_1,\dotsc,i_k \in \mathcal I$,  and $t_1, \dotsc, t_k \geqslant 0$, it holds
\[\spr(e^{A_{i_k} t_k} \dotsm e^{A_{i_1} t_1}) = e^{\lambda_{\mathrm d}(A) (t_1 + \dotsb + t_k)}.\]
\end{enumerate}
\end{theo}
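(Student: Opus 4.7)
The plan is to reduce both implications, via Proposition~\ref{prop:condition-initiale}, to the situation in which $P$ is strongly connected, $\nu$ is its unique invariant probability vector, and the single accessible recurrent class is all of $\cco 1, N \ccf$. Indeed, Proposition~\ref{prop:condition-initiale} yields $\lambda_{\mathrm p}(\nu,\mu,P,A) = \sum_i \alpha_i \lambda_{\mathrm p}(\nu^{[i]},\mu,P,A)$, summed over recurrent classes accessible from $\nu$; since each $\lambda_{\mathrm p}(\nu^{[i]},\mu,P,A) \leqslant \lambda_{\mathrm d}(A)$, statement~\ref{TheoPNotSC-ContinuousTime-Equality} is equivalent to having equality on every accessible class. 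Condition~\ref{TheoPNotSC-ContinuousTime-Spr} decouples the same way, so from now on I work within a single such class.

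The implication \ref{TheoPNotSC-ContinuousTime-Spr} $\Rightarrow$ \ref{TheoPNotSC-ContinuousTime-Equality} is then straightforward: under the reduction, $\Phi_\sigma(T)$ is almost surely of the form $e^{A_{i_K} s_K}\cdots e^{A_{i_1} s_1}$ with $i_j \in \cco 1, N\ccf$, so \ref{TheoPNotSC-ContinuousTime-Spr} forces $\spr(\Phi_\sigma(T)) = e^{\lambda_{\mathrm d}(A) T}$ almost surely. Since $\norm{\Phi_\sigma(T)} \geqslant \spr(\Phi_\sigma(T))$, taking logarithms, expectations, dividing by $T$ and passing to the $\limsup$ as $T\to\infty$ yields $\lambda_{\mathrm p}(\nu,\mu,P,A) \geqslant \lambda_{\mathrm d}(A)$, which combined with \eqref{LambdaPLeqLambdaD-Continuous} gives the equality.

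For the harder direction \ref{TheoPNotSC-ContinuousTime-Equality} $\Rightarrow$ \ref{TheoPNotSC-ContinuousTime-Spr}, the plan is first to reduce to the case $A$ irreducible by decomposing $\R^d$ into $A$-invariant subspaces and isolating the block on which $\lambda_{\mathrm d}(A)$ is attained, following the strategy of the discrete-time reduction in \cite{Chitour2021Gap}. Since irreducibility implies nondefectiveness (Remark~\ref{RemkNondefective}), $A$ admits an extremal, submultiplicative, induced norm $\norm{\cdot}_{\mathrm e}$, and the rescaled flow $\Psi_\sigma(t) := e^{-\lambda_{\mathrm d}(A) t}\Phi_\sigma(t)$ satisfies $\norm{\Psi_\sigma(t)}_{\mathrm e} \leqslant 1$. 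Set $f(t) := \mathbb E_\nu[-\log \norm{\Psi_\sigma(t)}_{\mathrm e}] \geqslant 0$. The cocycle identity $\Psi_\sigma(t+s) = \Psi_{\sigma(s+\cdot)}(t)\Psi_\sigma(s)$, submultiplicativity of $\norm{\cdot}_{\mathrm e}$, and the invariance of $\nu$ (via the Markov property) give the superadditivity $f(t+s) \geqslant f(t) + f(s)$; meanwhile assumption \ref{TheoPNotSC-ContinuousTime-Equality}, together with the equivalence of $\norm{\cdot}_{\mathrm e}$ and $\norm{\cdot}$, forces $f(t)/t \to 0$. A nonnegative superadditive function with vanishing asymptotic slope must vanish identically, so intersecting null sets over a countable dense set of $t$ and invoking continuity in $t$ yields that, almost surely, $\norm{\Psi_\sigma(t)}_{\mathrm e} = 1$ for every $t \geqslant 0$.

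To conclude, fix $i_1, \dotsc, i_k \in \cco 1, N\ccf$ and $t_1, \dotsc, t_k > 0$, and let $M = e^{B_{i_k} t_k}\cdots e^{B_{i_1} t_1}$ with $B_j = A_j - \lambda_{\mathrm d}(A)\id$. Since $P$ is strongly connected, for every $n \in \N$ and every $\varepsilon > 0$ the event that $\sigma$ realizes $n$ consecutive concatenations of the prescribed path, bridging any missing transitions $i_j \to i_{j+1}$ by short sojourns through intermediate states of $P$ and matching each prescribed duration within $\varepsilon$, has positive probability. On this event the previous step forces the associated product to have extremal norm $1$; letting the bridging durations and $\varepsilon$ tend to $0$ and using continuity gives $\norm{M^n}_{\mathrm e} = 1$ for every $n$, hence $\spr(M) = \lim_n \norm{M^n}_{\mathrm e}^{1/n} = 1$, which after undoing the shift by $\lambda_{\mathrm d}(A)\id$ is exactly \ref{TheoPNotSC-ContinuousTime-Spr}. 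I expect the main obstacles to be the reduction to irreducible $A$ (ensuring that the block supporting $\lambda_{\mathrm d}(A)$ also inherits the equality $\lambda_{\mathrm p} = \lambda_{\mathrm d}$) and the bookkeeping for the bridging sojourns when $P$ has some vanishing transition probabilities.
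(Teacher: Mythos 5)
Your overall architecture matches the paper's: reduce to a strongly connected recurrent class via Proposition~\ref{prop:condition-initiale}, then to irreducible $A$ via simultaneous block-triangularization, then work with an extremal norm and use strong connectedness of $P$ to pass from realizable words to arbitrary ones, finishing with Gelfand's formula. Where you genuinely diverge is in the core step \ref{TheoPNotSC-ContinuousTime-Equality}~$\Rightarrow$~\ref{TheoPNotSC-ContinuousTime-Spr} for irreducible $A$: you run a ``positive'' argument, showing via superadditivity of $t\mapsto\mathbb E_\nu[-\log\norm{\Psi_\sigma(t)}_{\mathrm e}]$ and Fekete's lemma that this function vanishes identically, hence $\norm{\Psi_\sigma(t)}_{\mathrm e}=1$ almost surely for all $t$, and then approximate an arbitrary word by realizable ones. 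The paper instead argues by contradiction: a single word with $\norm{\cdot}_{\mathrm e}<e^{\lambda_{\mathrm d}(A)\sum t_j}$ persists on an open set of times, is realizable with positive probability after bridging forbidden transitions with zero-duration sojourns, and then \eqref{eq:LyapProba} (the infimum characterization) forces $\lambda_{\mathrm p}<\lambda_{\mathrm d}$. The two arguments are equivalent in substance (your Fekete step is exactly the subadditivity behind \eqref{eq:LyapProba}), and your approximation step at the end is sound because $\norm{\cdot}_{\mathrm e}\leqslant 1$ everywhere and the product depends continuously on the durations. Two details you defer deserve care. First, in the block reduction you should isolate the block $\overline s$ on which $\lambda_{\mathrm p}$ (not $\lambda_{\mathrm d}$) is attained: since $\lambda_{\mathrm p}(A^{(\overline s)})=\lambda_{\mathrm p}(A)=\lambda_{\mathrm d}(A)\geqslant\lambda_{\mathrm d}(A^{(\overline s)})\geqslant\lambda_{\mathrm p}(A^{(\overline s)})$, that block automatically also attains $\lambda_{\mathrm d}$, whereas a block attaining only $\lambda_{\mathrm d}$ need not satisfy $\lambda_{\mathrm p}=\lambda_{\mathrm d}$. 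Second, statement~\ref{TheoPNotSC-ContinuousTime-Spr} concerns the full matrices $A_j$, not the diagonal blocks, so you still need the sandwich \eqref{eq:lambdaDGeqBlocks}: the spectral radius of a block-triangular product is the maximum over the diagonal blocks and is bounded above by $e^{\lambda_{\mathrm d}(A)\sum t_j}$ via \eqref{LambdaDGeqLambdaWord}, which pins it to the value attained on block $\overline s$. Both points are exactly how Proposition~\ref{CoroFixedP-ContinuousTime} closes the argument.
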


The proof of Theorem~\ref{TheoPNotSC-ContinuousTime} is the main goal of Section~\ref{SecCharactFixedMarkov}.

The sequel of the paper is motivated by the problem of characterizing equality between $\lambda_{\mathrm d}(A)$ and $\lambda_{\mathrm p}^{\sup}(A)$. A first step in that direction is to understand the behavior of sequences $(\nu_n, \mu_n, P_n)_{n \in \mathbb N}$ of Markov processes for a given $A = (A_1, \dotsc, A_N)$. For that purpose, we introduce the following definition.

\begin{defi}
A \emph{convexified  Markov process} $(x,\sigma)$ for $A$ is a continuous-time Markov process 
with modes $B_1,\dots,B_k$, where $k\in\llbracket 1,N\rrbracket$, $B_j\in {\rm co}\{A_\ell\mid \ell\in I_j\}$
for  $j=1,\dots,k$, and  
$I_1,\dots,I_k$ are 
pairwise disjoint nonempty subsets of $\llbracket 1, N\rrbracket$. 

We also define the quantity
\begin{equation}\label{eq:lambdapconv}
\lambda_{\mathrm p}^{\mathrm{conv}}(A)=\sup_{(\nu, \mu, P, B)} \lambda_{\mathrm p}(\nu, \mu, P, B),
\end{equation}
where the supremum is taken among all convexified Markov processes $(\nu, \mu, P, B)$ for $A$.
\end{defi}

{\sloppy

Note that the quantity $\lambda_{\mathrm p}^{\mathrm{conv}}(A)$ introduced above satisfies
\begin{equation}\label{eq:triplet}
\lambda_{\mathrm p}^{\sup}(A)\leqslant \lambda_{\mathrm p}^{\mathrm{conv}}(A)\leqslant \lambda_{\mathrm d}(A),
\end{equation}
where the last inequality follows from the fact that, for every convexified Markov processes $(\nu, \mu, P, B)$ for $A$, we have by \eqref{LambdaPLeqLambdaD-Continuous} that $\lambda_{\mathrm p}(\nu, \mu, P, B)\leqslant \lambda_{\mathrm d}(B)$ and, in addition, $\lambda_{\mathrm d}(B)\leqslant \lambda_{\mathrm d}(A)$, the latter inequality being a consequence of the fact that $\lambda_{\mathrm d}(\widehat A) = \lambda_{\mathrm d}(\co(\widehat A))$ for every $\widehat A \in \mathcal M_d(\mathbb R)^N$ (see, e.g., \cite{Shorten2007Stability}).

}

Our main result concerning convexified Markov processes is that they compactify the space of Markov processes. More precisely, we prove the following theorem.

\begin{theo}\label{Thm:CV-mu-pas-borne}
Consider a sequence $(x_n,\sigma_n)$ of Markov processes for $A$ with parameters $(\nu_n,\mu_n,P_n)_{n\in\N}$. Up to extracting a subsequence, there exists a convexified Markov process $(x,\sigma)$ for $A$ such that, for all $T,\delta>0$,
\begin{eqnarray*}
\mathbb P \po \sup_{t\in [0,T]} |x(t)-x_n(t)| > \delta \pf & \underset{n\rightarrow+\infty}\longrightarrow & 0\,.
\end{eqnarray*}
\end{theo}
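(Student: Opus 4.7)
The plan is a compactness-plus-averaging argument relying on the timescale decomposition of finite-state Markov chains provided by Appendix~\ref{Sec:MarkovDecomposition}.

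\textbf{Step 1 (compactness of the easy parameters).} The probability vectors $\nu_n$ and stochastic matrices $P_n$ both live in compact sets, so up to extraction $\nu_n \to \nu$ and $P_n \to P$. If $(\mu_n)_{n \in \N}$ admits a bounded subsequence, further extraction yields $\mu_n \to \mu_\infty \in [0, \infty)$; the semigroups $e^{t \mu_n (P_n - \id)}$ then converge uniformly on $[0, T]$ to $e^{t \mu_\infty (P - \id)}$, and $\sigma_n$ can be pathwise coupled (for instance via a Skorokhod construction based on a common Poisson clock) to the Markov process $\sigma$ with parameters $(\nu, \mu_\infty, P)$ so that $\sup_{[0, T]} |x_n - x|$ tends to $0$ in probability. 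This case fits the conclusion with singleton clusters $I_j = \{j\}$ and $B_j = A_j$.

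\textbf{Step 2 (timescale hierarchy for unbounded $\mu_n$).} The interesting case is $\mu_n \to \infty$, where the instantaneous jump rates $\lambda_n(i, j) = \mu_n (P_n)_{ij}$ may diverge at different rates. Applying the decomposition result of Appendix~\ref{Sec:MarkovDecomposition} (adapted from \cite{Landim2016}) one extracts, up to a subsequence, a partition $I_1, \dotsc, I_k$ of $\llbracket 1, N \rrbracket$ (modulo a transient remainder visited only on a vanishing time fraction) such that the chain equilibrates inside each $I_j$ at a rate $r_n^{(j)} \to \infty$ with a well-defined invariant law $\pi^{(j)}$ supported on $I_j$, while the cluster projection of $\sigma_n$ converges in law to a continuous-time Markov chain $\widehat \sigma$ on $\llbracket 1, k \rrbracket$ whose initial law and jump rates are read off from the next slowest scale in $\lambda_n$. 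The pair $(\widehat \sigma, \pi^{(\cdot)})$ encodes the limiting convexified process.

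\textbf{Step 3 (averaging of the linear flow).} Set $B_j = \sum_{\ell \in I_j} \pi^{(j)}_\ell A_\ell \in \co\{A_\ell \mid \ell \in I_j\}$. On any time window $[s, s + h]$ on which the cluster projection of $\sigma_n$ stays equal to $j$, the occupation measure $\frac{1}{h} \int_s^{s + h} \delta_{\sigma_n(u)} \dd u$ converges in probability to $\pi^{(j)}$ (by the ergodic theorem for the fast chain, whose mixing time is $O(1/r_n^{(j)}) \to 0$). A Gr\"onwall-type estimate using $K = \max_\ell \norm{A_\ell}$ then turns this occupation convergence into
\[\sup_{t \in [s, s + h]} \norm*{\Phi_{\sigma_n}(t) \Phi_{\sigma_n}(s)^{-1} - e^{(t - s) B_j}} \xrightarrow[n \to \infty]{\mathbb P} 0.\]
Concatenating across the finitely many macroscopic jump intervals of $\widehat \sigma$ up to time $T$, after matching the jump times of $\sigma_n$'s projection with those of $\widehat \sigma$ via the coupling of Step~2, produces the asserted uniform convergence of $x_n$ to the trajectory $x$ driven by the convexified process $(\widehat\sigma, B_1, \dotsc, B_k)$.

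\textbf{Main obstacle.} The difficulty is concentrated in Step~2: several distinct scales may coexist in $\mu_n (P_n)_{ij}$, and the Landim-type hierarchy must be iterated across levels to produce genuinely metastable classes together with quantitative estimates on the fast equilibration rates and on the accuracy of the cluster projection. Once this combinatorial-probabilistic scaffolding is in place, Step~3 reduces to ergodic averaging, which is tractable thanks to the linearity of the dynamics and the uniform bound $\norm{A_\ell} \leqslant K$: short "bad" subintervals (near macroscopic jumps, or during visits to the transient remainder) contribute only flow factors of the form $1 + o(1)$ and do not spoil the uniform-in-probability convergence.
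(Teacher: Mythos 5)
Your plan follows the same architecture as the paper's proof: dispose of the bounded-$\mu_n$ case by a synchronous/Skorokhod coupling, invoke the Landim-type timescale hierarchy of Appendix~\ref{Sec:MarkovDecomposition} to extract macroscopic classes with fast internal equilibration and a limiting slow chain, and then convert convergence of occupation measures into uniform convergence of $x_n$ (the paper does this last step via the averaging theorem of Freidlin--Wentzell rather than a hand-rolled Gr\"onwall estimate, but that is a cosmetic difference). You also correctly locate the difficulty in the construction of the hierarchy. Two points in your sketch are, however, asserted where the paper has to supply genuine arguments, and as written they are gaps.

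First, the decomposition of Appendix~\ref{Sec:MarkovDecomposition} requires the chain to be strongly connected for every $n$, which a general $P_n$ need not be. The paper arranges this by replacing $\sigma_n$ with a chain that additionally jumps to a uniform state at rate $e^{-n}$; a synchronous coupling shows the two agree on $[0,T]$ with probability at least $e^{-Te^{-n}}\to 1$, so nothing is lost. Without some such reduction your Step~2 cannot be launched. Second, and more substantially, your claim that the occupation measure inside a macroscopic class converges to $\pi^{(j)}$ ``by the ergodic theorem for the fast chain, whose mixing time is $O(1/r_n^{(j)})\to 0$'' presupposes a single uniform equilibration rate for the whole class. The class $\mathcal E_z^{h+1}$ is itself assembled from a nested hierarchy of sub-classes mixing at the different scales $\theta_n^1\ll\dots\ll\theta_n^h$, with excursions through transient states in between, and the Landim results do not directly provide a mixing bound at scale $\theta_n^h$ for the whole class. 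The paper has to construct an auxiliary chain $\tilde\sigma_n^z$ (redirecting exits from $\overline{\mathcal E}_z^{h+1}$ back to a reference point) and prove a uniform Doeblin condition for it by a multi-step path argument climbing down the hierarchy (Proposition~\ref{prop:semigroup-pi}), and then a separate occupation-time comparison (Proposition~\ref{Prop:mix}). This is the technical core of the proof and is not an off-the-shelf consequence of the decomposition you cite; your sketch would need to supply it.
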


The proof of Theorem~\ref{Thm:CV-mu-pas-borne} can be found in Section~\ref{sec:compactification}.

Thanks to the compactification result from Theorem~\ref{Thm:CV-mu-pas-borne}, we are able to provide the following result on the relations between $\lambda_{\mathrm p}^{\sup}(A)$, $\lambda_{\mathrm p}^{\mathrm{conv}}(A)$, and $\lambda_{\mathrm d}(A)$. Recall that, given a square matrix $M$, $\lambda(M)$ denotes its spectral abscissa.

\begin{theo}
\label{thm:lambda-p-conv}
Let $A = (A_1, \dotsc, A_N) \in \mathcal M_d(\mathbb R)^N$.
\begin{enumerate}
\item\label{thm:main-iff-conv} The equality $\lambda_{\mathrm d}(A) = \lambda_{\mathrm p}^{\mathrm{conv}}(A)$ holds true if and only if there exists $M \in \co(A)$ such that $\lambda(M) = {\lambda_{\mathrm d}(A)}$.

\item\label{thm:main-a} If $\lambda_{\mathrm d}(A) = \lambda_{\mathrm p}^{\sup}(A)$ then there exists $M \in \co(A)$ such that $\lambda(M) = {\lambda_{\mathrm d}(A)}$.

\item\label{thm:main-b} Assume that there exists $M \in \co(A)$ such that $\lambda(M) = {\lambda_{\mathrm d}(A)}$ and that, for every $\varepsilon>0$, there exist $M^\varepsilon\in\co(A)$ and a sequence $(\nu_n, \mu_n, P_n)_{n\in\N}$ of Markov processes for $A$  such that
$\norm{M-M^\varepsilon} < \varepsilon$ and 
\begin{equation}\label{eq:condC}
\lambda(M^\varepsilon)=\lim_{n\to\infty}\lambda_{\mathrm p}(\nu_n, \mu_n, P_n,A).
\end{equation}
Then $\lambda_{\mathrm d}(A) = \lambda_{\mathrm p}^{\sup}(A)$.
\end{enumerate}
\end{theo}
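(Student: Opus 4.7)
The plan is to dispatch the easier directions (a)$(\Leftarrow)$, (b), and (c) essentially for free, and to concentrate the real work on (a)$(\Rightarrow)$.

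For (a)$(\Leftarrow)$, given $M\in\co(A)$ with $\lambda(M)=\lambda_{\mathrm d}(A)$, I would take the trivial single-mode convexified Markov process with $k=1$, $I_1=\llbracket 1,N\rrbracket$, $B_1=M$, $P=(1)$, and any $\mu>0$. Its dynamics is the deterministic one $\dot x=Mx$, so
\[
\lambda_{\mathrm p}(\nu,\mu,P,B_1)=\lim_{t\to\infty}\frac{1}{t}\log\norm{e^{Mt}}=\lambda(M)=\lambda_{\mathrm d}(A),
\]
and \eqref{eq:triplet} then gives $\lambda_{\mathrm p}^{\mathrm{conv}}(A)=\lambda_{\mathrm d}(A)$. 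For (c), the hypothesis together with the definition of $\lambda_{\mathrm p}^{\sup}$ yields $\lambda_{\mathrm p}^{\sup}(A)\geq\lambda(M^\varepsilon)$ for every $\varepsilon>0$; continuity of the spectral abscissa and $\norm{M-M^\varepsilon}<\varepsilon$ force $\lambda(M^\varepsilon)\to\lambda(M)=\lambda_{\mathrm d}(A)$ as $\varepsilon\to 0$, and combining with \eqref{LambdaPLeqLambdaD-Continuous} gives the equality. For (b), the chain \eqref{eq:triplet} reduces the hypothesis $\lambda_{\mathrm p}^{\sup}(A)=\lambda_{\mathrm d}(A)$ to $\lambda_{\mathrm p}^{\mathrm{conv}}(A)=\lambda_{\mathrm d}(A)$, from which (a)$(\Rightarrow)$ delivers the required $M$.

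For (a)$(\Rightarrow)$, I would start from a maximizing sequence of convexified Markov processes $(\nu_n,\mu_n,P_n,B_n)$ with $\lambda_{\mathrm p}(\nu_n,\mu_n,P_n,B_n)\to\lambda_{\mathrm d}(A)$. Since the set of ordered partitions of subsets of $\llbracket 1,N\rrbracket$ is finite and each $B_n^{(j)}$ lies in the compact convex set $\co\{A_\ell:\ell\in I_j\}$, a diagonal extraction fixes the partition $\{I_1,\dotsc,I_k\}$ and produces limits $B_n^{(j)}\to B_\infty^{(j)}\in\co(A)$. The sandwich $\lambda_{\mathrm p}\leq\lambda_{\mathrm d}(B_n)\leq\lambda_{\mathrm d}(A)$ and continuity of $\lambda_{\mathrm d}$ in its arguments then yield $\lambda_{\mathrm d}(B_\infty)=\lambda_{\mathrm d}(A)$. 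I would next apply Theorem~\ref{Thm:CV-mu-pas-borne} to the sequence of Markov chains $(\nu_n,\mu_n,P_n)$ driving the fixed family $B_\infty$: up to a further extraction, the trajectories converge uniformly in probability on compact intervals to those of a convexified Markov process with modes $C_1,\dotsc,C_m\in\co(B_\infty)\subseteq\co(A)$ and parameters $(\tilde\nu,\tilde\mu,\tilde P)$. If this path-level convergence could be promoted to convergence of the probabilistic Lyapunov exponents, the limit would satisfy $\lambda_{\mathrm p}(\tilde\nu,\tilde\mu,\tilde P,C)=\lambda_{\mathrm d}(A)=\lambda_{\mathrm d}(C)$, and Theorem~\ref{TheoPNotSC-ContinuousTime} applied to the limit, specialized to $k=1$ in its spectral-radius condition, would force $\lambda(C_i)=\lambda_{\mathrm d}(A)$ for every $C_i$ in a recurrent class of $\tilde P$ accessible from $\tilde\nu$; any such $C_i$ would then serve as the desired $M\in\co(A)$.

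The main obstacle is precisely the transfer from the compact-interval convergence in probability supplied by Theorem~\ref{Thm:CV-mu-pas-borne} to convergence of the $t\to+\infty$ Lyapunov exponent. To handle it I would split along subsequences according to the asymptotics of $\mu_n$: when $(\mu_n)_{n\in\N}$ remains bounded, the infinitesimal generators converge and standard Feller-type arguments apply to the averaged log-norm; when $\mu_n\to+\infty$, the timescale decomposition developed in Appendix~\ref{Sec:MarkovDecomposition} separates fast and slow jumps and reduces the problem, by induction on the effective number of modes, to a convexified process living over a strictly smaller family. Producing the uniform-in-time control of $\mathbb E[\log\norm{\Phi_{\sigma_n}(t)}]$ needed to interchange the limits $t\to+\infty$ and $n\to+\infty$ is where the bulk of the technical difficulty is expected to lie, and this is consistent with the authors' own comment that establishing such a convergence unconditionally would remove the technical assumption needed in part~\ref{thm:main-b}.
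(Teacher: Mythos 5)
Your treatment of (a)$(\Leftarrow)$, (b), and (c) matches the paper's proof and is correct. The gap is in (a)$(\Rightarrow)$, and you have named it yourself without closing it: you leave open how to pass from the compact-interval convergence in probability of Theorem~\ref{Thm:CV-mu-pas-borne} to a statement about the $t\to+\infty$ Lyapunov exponent of the limit process, and the strategy you sketch (uniform-in-time control of $\mathbb E[\log\norm{\Phi_{\sigma_n}(t)}]$ so as to interchange the limits in $t$ and $n$) is both harder than necessary and not carried out. In fact it is essentially the open problem the authors flag for part~\ref{thm:main-b}; if it were available unconditionally, condition~(C) would be superfluous.

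The paper avoids this entirely because only a \emph{one-sided} inequality is needed. First reduce, via Proposition~\ref{prop:condition-initiale}, to the case where each $\nu_n$ is invariant for $P_n$; then \eqref{eq:LyapProba} gives
\[
\frac{1}{T}\,\mathbb E\bigl[\log\norm{\Phi_{\sigma_n}(T)}\bigr]\;\geqslant\;\lambda_{\mathrm p}(\nu_n,\mu_n,P_n,B_n)
\qquad\text{for every fixed }T>0 .
\]
Corollary~\ref{coro:continuity} (the flow-level reformulation of Corollary~\ref{Coro:CV-mu-pas-borne}, which already handles varying modes $B_n$ by the Gronwall comparison you sketch) yields convergence of the left-hand side for each fixed $T$, so letting $n\to\infty$ and then taking $\limsup_{T\to+\infty}$ gives $\lambda_{\mathrm p}(\nu,\mu,P,B)\geqslant\lambda_{\mathrm p}^{\mathrm{conv}}(A)$ for the limit process; the reverse inequality holds by definition. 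In other words, the Lyapunov exponent is an infimum over $T$ of quantities continuous along the subsequence, hence upper semicontinuous, and that is exactly what is needed to show the supremum defining $\lambda_{\mathrm p}^{\mathrm{conv}}(A)$ is attained (Proposition~\ref{prop:maximizing}). Once a maximizer $(\nu,\mu,P,B)$ exists, your endgame — $\lambda_{\mathrm p}(\nu,\mu,P,B)=\lambda_{\mathrm d}(B)$, then Theorem~\ref{TheoPNotSC-ContinuousTime} with $k=1$ applied to a mode $B_i$ in an accessible recurrent class — is exactly the paper's argument. You should replace your interchange-of-limits step by this semicontinuity observation; as written, the proof of (a)$(\Rightarrow)$, and hence of (b), is incomplete.
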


We conjecture that the converse of Theorem~\ref{thm:lambda-p-conv} \ref{thm:main-a} is true. However, we are only able to prove it in low dimension, as stated in the following result. 

\begin{prop}\label{prop:dim23}
Let $d\leqslant 3$ and $A  \in \mathcal M_d(\mathbb R)^N$. 
 Then $\lambda_{\mathrm d}(A) = \lambda_{\mathrm p}^{\sup}(A)$ if and only if there exists 
 $M \in \co(A)$ such that $\lambda(M) = {\lambda_{\mathrm d}(A)}$.
\end{prop}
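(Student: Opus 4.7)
The ``only if'' direction is immediate from Theorem~\ref{thm:lambda-p-conv}~\ref{thm:main-a} and does not require $d\leqslant 3$. The plan is therefore to focus on the converse implication: assuming the existence of some $M\in\co(A)$ with $\lambda(M)=\lambda_{\mathrm d}(A)$, I would establish $\lambda_{\mathrm p}^{\sup}(A)=\lambda_{\mathrm d}(A)$ by verifying the hypothesis of Theorem~\ref{thm:lambda-p-conv}~\ref{thm:main-b}.

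That hypothesis demands, for each $\varepsilon>0$, a matrix $M^\varepsilon\in\co(A)$ with $\norm{M-M^\varepsilon}<\varepsilon$ together with a sequence $(\nu_n,\mu_n,P_n)_{n\in\N}$ of Markov processes for $A$ whose probabilistic Lyapunov exponents converge to $\lambda(M^\varepsilon)$. The natural source of such sequences is the high-frequency averaging regime: writing $M^\varepsilon=\sum_{i=1}^N\alpha_i^\varepsilon A_i$ for some probability vector $\alpha^\varepsilon$ obtained by perturbing a decomposition of $M$, one picks a strongly connected stochastic matrix $P^\varepsilon$ with unique invariant measure $\alpha^\varepsilon$ and lets the jump rate $\mu_n$ tend to $+\infty$. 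Classical averaging results, together with the compactification Theorem~\ref{Thm:CV-mu-pas-borne}, ensure that the associated flows converge on compact time intervals to the deterministic motion $\dot x=M^\varepsilon x$, whose Lyapunov exponent equals $\lambda(M^\varepsilon)$. The convergence of the Lyapunov exponents themselves along such a sequence is exactly the content of Proposition~\ref{prop:converge}, provided that the pair $(A,M^\varepsilon)$ satisfies the technical hypothesis of Definition~\ref{defi:cond-C}.

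The whole question therefore reduces to the following density statement, which is the only place where the restriction $d\leqslant 3$ intervenes: for every $M\in\co(A)$ and every $\varepsilon>0$ there exists $M^\varepsilon\in\co(A)$ with $\norm{M-M^\varepsilon}<\varepsilon$ such that $(A,M^\varepsilon)$ satisfies Definition~\ref{defi:cond-C}. I would prove this by a case analysis on the Jordan structure of $M^\varepsilon$. The possible spectral configurations for a real matrix of size $d\leqslant 3$ are very few (a simple real leading eigenvalue, a complex conjugate pair of leading eigenvalues, or a real leading eigenvalue of multiplicity two or three, possibly with a nontrivial Jordan block); in each case an arbitrarily small perturbation of $M$ inside $\co(A)$---for instance obtained by a convex combination with a fixed interior point of $\co(A)$---places the leading spectrum in the generic configuration required by Condition~C.

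The main obstacle I anticipate is the explicit verification of this density property: spelling out, in each low-dimensional spectral configuration, what Condition~C actually demands (typically simplicity of the spectral abscissa, a strict spectral gap, or a suitable transversality with respect to the directions $A_i-M^\varepsilon$) and constructing the perturbation $M^\varepsilon$ accordingly, while keeping it inside $\co(A)$. Once this is done, Proposition~\ref{prop:converge} supplies the Markov sequences with the right limiting Lyapunov exponent, and Theorem~\ref{thm:lambda-p-conv}~\ref{thm:main-b} yields $\lambda_{\mathrm p}^{\sup}(A)=\lambda_{\mathrm d}(A)$, completing the proof.
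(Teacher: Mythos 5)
Your first two paragraphs match the paper's strategy: the ``only if'' direction is Theorem~\ref{thm:lambda-p-conv}~\ref{thm:main-a}, and the converse is obtained by feeding high-frequency averaging sequences (Proposition~\ref{prop:converge}) into Theorem~\ref{thm:lambda-p-conv}~\ref{thm:main-b}, the role of $d\leqslant 3$ being to guarantee density of condition (C). However, your reduction of the problem to the statement ``for every $M\in\co(A)$ and $\varepsilon>0$ there is $M^\varepsilon\in\co(A)$ close to $M$ with $(A,M^\varepsilon)$ satisfying condition (C)'' has a genuine gap: this density statement is false for reducible $A$. Take $d=2$, $A_1=\mathrm{diag}(1,0)$, $A_2=\mathrm{diag}(0,1)$: for every $M=\mathrm{diag}(s,1-s)$ with $s\neq 1/2$ the slow eigendirection $E_2$ is a common eigendirection of $A_1$ and $A_2$, so $F_{A_i}(\theta)=0\in E_2$ for $\theta\in E_2\cap S^1$ and both $i$, and condition (C) fails on a dense open subset of $\co(A)$. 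No amount of perturbing the Jordan structure of $M^\varepsilon$ inside $\co(A)$ can fix this, because condition (C) is a joint transversality condition between the $A_i$ and the spectral splitting of $M^\varepsilon$, not a spectral genericity condition on $M^\varepsilon$ alone. Accordingly, the paper's density result (Lemma~\ref{lemma:dim23}) is stated and proved only for \emph{irreducible} $A$, and its proof is a contradiction argument that differentiates eigenvector fields $s\mapsto v_l(s),w_l(s)$ over a parameterized piece of $\co(A)$ and concludes by exhibiting a common invariant line for the $A_i$ or the $A_i^T$, contradicting irreducibility.

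The missing step is therefore a preliminary reduction to the irreducible case. The paper uses the simultaneous block-triangularization \eqref{DecomposeAj} of $A$ into irreducible diagonal blocks $A^{(1)},\dotsc,A^{(S)}$ (extended to $\co(A)$), picks a block index $j$ with $\lambda(M^{(j)})=\lambda(M)$, and observes that $\lambda_{\mathrm d}(A^{(j)})\leqslant\lambda_{\mathrm d}(A)=\lambda(M)=\lambda(M^{(j)})\leqslant\lambda_{\mathrm d}(A^{(j)})$ forces equalities throughout; Lemma~\ref{lemma:dim23} and Corollary~\ref{Coro:conditionC} applied to the irreducible tuple $A^{(j)}$ then give $\lambda_{\mathrm d}(A^{(j)})=\lambda_{\mathrm p}^{\sup}(A^{(j)})$, and one concludes via $\lambda_{\mathrm p}^{\sup}(A^{(j)})\leqslant\lambda_{\mathrm p}^{\sup}(A)\leqslant\lambda_{\mathrm d}(A)$. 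You should insert this reduction before invoking any density of condition (C), and replace your sketched Jordan-structure case analysis by (a reference to) the argument of Lemma~\ref{lemma:dim23}, which is where the real work for $d\leqslant 3$ lies.
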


The proofs of Theorem~\ref{thm:lambda-p-conv} and Proposition~\ref{prop:dim23} are provided in Section~\ref{SecCharactSupMarkov}.

\section[Characterization of equality between \texorpdfstring{$\lambda_{\mathrm d}(A)$}{lambda d(A)} and \texorpdfstring{$\lambda_{\mathrm p}(\nu, \mu, P, A)$}{lambda p(nu, mu, P, A)}]{Characterization of equality between $\lambda_{\mathrm d}(A)$ and \newline $\lambda_{\mathrm p}(\nu, \mu, P, A)$}
\label{SecCharactFixedMarkov}

The goal of this section is to prove Theorem~\ref{TheoPNotSC-ContinuousTime}. We start with the particular situation in which $P$ is strongly connected and $A$ is irreducible.

\begin{prop}
\label{PropFixedP-ContinuousTime}
Let $P \in \mathcal M_{N}(\mathbb R)$ be a stochastic strongly connected matrix, $\nu$ a probability vector of $\mathbb R^N$, and  $\mu>0$. Let $A = (A_1, \dotsc,\allowbreak A_N) \allowbreak \in \mathcal M_d(\mathbb R)^N$ be irreducible and $\norm{\cdot}_{\mathrm e}$ be an extremal norm for $A$. Then the following statements are equivalent:
\begin{enumerate}
\item\label{PropFixedP-ContinuousTime-Equality} $\lambda_{\mathrm d}(A) = \lambda_{\mathrm p}(\nu, \mu, P, A)$.
\item\label{PropFixedP-ContinuousTime-Barabanov} For every $k \in \mathbb N$, $i_1, \dotsc, i_k \in \llbracket 1, N\rrbracket$, and $t_1, \dotsc, t_k \geqslant 0$, one has
\[\norm*{e^{A_{i_k} t_k} \dotsm e^{A_{i_1} t_1}}_{\mathrm e} = e^{\lambda_{\mathrm d}(A) (t_1 + \dotsb + t_k)}.\]
\item\label{PropFixedP-ContinuousTime-Spr} For every $k \in \mathbb N$, $i_1, \dotsc, i_k \in \llbracket 1, N\rrbracket$, and $t_1, \dotsc, t_k \geqslant 0$, one has
\[\spr(e^{A_{i_k} t_k} \dotsm e^{A_{i_1} t_1}) = e^{\lambda_{\mathrm d}(A) (t_1 + \dotsb + t_k)}.\]
\item\label{PropFixedP-ContinuousTime-SkewSymmetric} Up to a common change of basis, the matrices $A_i - \lambda_{\mathrm d}(A) \id$, $i \in \llbracket 1, N\rrbracket$, are skew-symmetric.
\end{enumerate}
\end{prop}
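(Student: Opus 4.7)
The plan is to prove the equivalences cyclically as \ref{PropFixedP-ContinuousTime-Equality} $\Rightarrow$ \ref{PropFixedP-ContinuousTime-Barabanov} $\Rightarrow$ \ref{PropFixedP-ContinuousTime-Spr} $\Rightarrow$ \ref{PropFixedP-ContinuousTime-SkewSymmetric} $\Rightarrow$ \ref{PropFixedP-ContinuousTime-Equality}. Several preliminary reductions are convenient: by irreducibility of $A$ and Remark~\ref{RemkNondefective}, an extremal norm $\norm{\cdot}_{\mathrm e}$ exists; by shifting $A_i \mapsto A_i - \lambda_{\mathrm d}(A)\id$ one may assume $\lambda_{\mathrm d}(A) = 0$ in the intermediate steps; and since $P$ is strongly connected, Proposition~\ref{prop:condition-initiale} ensures that the probabilistic exponent does not depend on the initial law, so one may take $\nu$ to be the unique invariant probability vector of $P$ and exploit the infimum formula \eqref{eq:LyapProba}.

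The step \ref{PropFixedP-ContinuousTime-SkewSymmetric} $\Rightarrow$ \ref{PropFixedP-ContinuousTime-Equality} is essentially immediate: after the prescribed change of basis, each $e^{A_i t}$ is $e^{\lambda_{\mathrm d}(A) t}$ times an orthogonal matrix, so for every $\sigma$ and $t$ one has $\norm{\Phi_\sigma(t)} = e^{\lambda_{\mathrm d}(A) t}$ in the new Euclidean norm (and hence the same asymptotic rate in any induced norm by equivalence), whence $\lambda_{\mathrm p}(\nu,\mu,P,A) = \lambda_{\mathrm d}(A)$. The analytic heart of the argument is \ref{PropFixedP-ContinuousTime-Equality} $\Rightarrow$ \ref{PropFixedP-ContinuousTime-Barabanov}. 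Plugging $\norm{\cdot}_{\mathrm e}$ into \eqref{eq:LyapProba} yields $\lambda_{\mathrm p}(\nu,\mu,P,A) \leqslant \frac{1}{t}\mathbb E[\log\norm{\Phi_\sigma(t)}_{\mathrm e}]$ for every $t>0$, while extremality gives the pointwise bound $\log\norm{\Phi_\sigma(t)}_{\mathrm e}\leqslant \lambda_{\mathrm d}(A) t$. The hypothesis $\lambda_{\mathrm p} = \lambda_{\mathrm d}(A)$ therefore forces $\mathbb E[\log\norm{\Phi_\sigma(t)}_{\mathrm e}] = \lambda_{\mathrm d}(A) t$ for every $t>0$, and hence $\norm{\Phi_\sigma(t)}_{\mathrm e} = e^{\lambda_{\mathrm d}(A) t}$ $\mathbb P_\nu$-almost surely. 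Intersecting over rational $t$ and using the continuity of $t\mapsto\norm{\Phi_\sigma(t)}_{\mathrm e}$, this identity holds simultaneously for all $t\geqslant 0$ on a full-measure set of signals.

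To transfer this identity to an arbitrary deterministic finite sequence $(i_1, t_1), \dotsc, (i_k, t_k)$ which might be incompatible with the transition graph of $P$ or with the support of $\nu$, I exploit the strong connectedness of $P$: whenever $p_{i_j i_{j+1}} = 0$, I insert a short $P$-admissible bridge of total duration $\varepsilon > 0$ between the modes $i_j$ and $i_{j+1}$, and I prepend a $P$-admissible segment joining some index in the support of $\nu$ to $i_1$. The resulting signal $\sigma^\varepsilon$ lies in the support of the Markov law, so the almost sure extremal identity established above applies to it. Letting $\varepsilon\to 0$ and using the continuity of products of matrix exponentials in the sojourn times yields the desired identity $\norm{e^{A_{i_k} t_k}\dotsm e^{A_{i_1} t_1}}_{\mathrm e} = e^{\lambda_{\mathrm d}(A)(t_1+\dotsb+t_k)}$, proving \ref{PropFixedP-ContinuousTime-Barabanov}.

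For \ref{PropFixedP-ContinuousTime-Barabanov} $\Rightarrow$ \ref{PropFixedP-ContinuousTime-Spr}, I squeeze the spectral radius between $\spr(\Phi) \leqslant \norm{\Phi}_{\mathrm e} = e^{\lambda_{\mathrm d}(A) t}$ and the matching lower bound coming from Gelfand's formula $\spr(\Phi) = \lim_n \norm{\Phi^n}_{\mathrm e}^{1/n}$, where each $\Phi^n$ is itself a finite product to which \ref{PropFixedP-ContinuousTime-Barabanov} applies. The last step \ref{PropFixedP-ContinuousTime-Spr} $\Rightarrow$ \ref{PropFixedP-ContinuousTime-SkewSymmetric} is the algebraic core: the multiplicative semigroup generated by $\{e^{A_i t}\mid i\in\llbracket 1,N\rrbracket,\, t\geqslant 0\}$ is irreducible (since $A$ is), and after rescaling by $e^{-\lambda_{\mathrm d}(A) t}$ every one of its elements has spectral radius one; the classification of irreducible semigroups of matrices with constant spectral radius from \cite{Protasov2017Matrix} then supplies a change of basis in which the semigroup is made of orthogonal matrices, and differentiating at $t=0$ shows that each $A_i - \lambda_{\mathrm d}(A) \id$ is skew-symmetric. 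I expect the main obstacle to be the approximation argument in \ref{PropFixedP-ContinuousTime-Equality} $\Rightarrow$ \ref{PropFixedP-ContinuousTime-Barabanov}, since the almost sure extremal identity must be carefully extended from the support of the Markov law to arbitrary deterministic sequences via both the strong connectedness of $P$ and the continuity of matrix-exponential products; the invocation of \cite{Protasov2017Matrix} in the last step is conceptually deep but available off the shelf.
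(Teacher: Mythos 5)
Your architecture coincides with the paper's: the same four ingredients appear in the same roles (the extremal-norm upper bound together with the infimum formula \eqref{eq:LyapProba} for the equivalence of (a) and (b), Gelfand's formula for (b)$\Rightarrow$(c), orthogonality of the rescaled exponentials for (d)$\Rightarrow$(a), and the classification of irreducible constant-spectral-radius semigroups from \cite{Protasov2017Matrix} for (c)$\Rightarrow$(d)), as does the use of strong connectedness of $P$ to bridge transitions with $p_{i_\ell i_{\ell+1}}=0$. The only structural difference is that you prove (a)$\Rightarrow$(b) directly, whereas the paper argues by contraposition: a single violating word is bridged into a $P$-admissible one, the strict inequality is propagated to an open set of sojourn times by continuity, and this open set carries positive probability, forcing $\lambda_{\mathrm p}(\nu,\mu,P,A)<\lambda_{\mathrm d}(A)$ via \eqref{eq:LyapProba}. (Your cycle also routes (d)$\Rightarrow$(a) directly instead of (d)$\Rightarrow$(c), which is harmless.)

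Your direct route for (a)$\Rightarrow$(b) contains one inferential slip that must be repaired. From $\mathbb E[\log\norm{\Phi_\sigma(t)}_{\mathrm e}]=\lambda_{\mathrm d}(A)t$ you correctly deduce $\norm{\Phi_\sigma(t)}_{\mathrm e}=e^{\lambda_{\mathrm d}(A)t}$ almost surely, but you then claim the identity "applies to" the specific bridged signal $\sigma^\varepsilon$ because it lies in the support of the Markov law. An almost-sure property need not hold at any prescribed point of the support: the exceptional null set can meet the support. The correct patch is exactly the continuity argument your own last step hints at, run the other way: by extremality the inequality $\norm{e^{A_{i_k}t_k}\dotsm e^{A_{i_1}t_1}}_{\mathrm e}\leqslant e^{\lambda_{\mathrm d}(A)(t_1+\dotsb+t_k)}$ holds everywhere, so the set of time vectors $(t_1,\dotsc,t_k)\in(0,+\infty)^k$ where it is strict is open by continuity; for a $P$-admissible mode word starting from the support of the invariant law, a nonempty such open set would have positive probability under the product of exponential holding-time laws, contradicting the almost-sure equality. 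Hence equality holds for all positive times, and then for all $t_\ell\geqslant 0$ by continuity, which also lets you remove the $\varepsilon$-bridges. With that correction your argument is complete and matches the paper's in substance.
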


\begin{proof}
Note that, in terms of the flow $\Phi_\sigma$, items \ref{PropFixedP-ContinuousTime-Barabanov} and \ref{PropFixedP-ContinuousTime-Spr} can be equivalently stated by saying that the quantities $\frac{1}{t} \log \norm*{\Phi_\sigma(t)}_{\mathrm e}$ and $\frac{1}{t} \log \spr(\Phi_\sigma(t))$, respectively, are independent of $t > 0$ and $\sigma \in \Sigma$ and are equal to $\lambda_{\mathrm d}(A)$.

We will first show that \ref{PropFixedP-ContinuousTime-Equality}, \ref{PropFixedP-ContinuousTime-Barabanov}, and \ref{PropFixedP-ContinuousTime-Spr} are equivalent. The fact that \ref{PropFixedP-ContinuousTime-Barabanov} implies \ref{PropFixedP-ContinuousTime-Spr} follows from Gelfand's formula for the spectral radius. To show that \ref{PropFixedP-ContinuousTime-Spr} implies \ref{PropFixedP-ContinuousTime-Equality}, notice that, for every induced norm $\norm{\cdot}$ in $\mathcal M_d(\mathbb R)$, it follows from \ref{PropFixedP-ContinuousTime-Spr} that
\[
\lambda_{\mathrm d}(A) = \frac{1}{t} \log \spr(\Phi_\sigma(t)) \leqslant \frac{1}{t} \log \norm{\Phi_\sigma(t)}
\]
for every $\sigma \in \Sigma$ and $t > 0$. Hence, by first taking the expectation with respect to $\sigma$ and then the $\limsup$ as $t$ tends to infinity, we obtain that $\lambda_{\mathrm d}(A) \leqslant \lambda_{\mathrm p}(\nu, \mu, P, A)$, yielding \ref{PropFixedP-ContinuousTime-Equality} thanks to \eqref{LambdaPLeqLambdaD-Continuous}.

Let us now prove that \ref{PropFixedP-ContinuousTime-Equality} implies \ref{PropFixedP-ContinuousTime-Barabanov}. 
Without loss of generality, by Proposition~\ref{prop:condition-initiale}, we assume that $\nu$ is the unique invariant probability vector for $P$.
By definition of extremal norm, for every $k \in \mathbb N$, $i_1, \dotsc, i_k \in \llbracket 1, N\rrbracket$, and $t_1, \dotsc, t_k \geqslant 0$, one has
\begin{equation*}
\norm*{e^{A_{i_k} t_k} \dotsm e^{A_{i_1} t_1}}_{\mathrm e} \leqslant e^{\lambda_{\mathrm d}(A) (t_1 + \dotsb + t_k)},
\end{equation*}
which can be equivalently rewritten as
\begin{equation}
\label{eq:logBarabanov}
\frac{1}{t}\log\norm{\Phi_\sigma(t)}_{\mathrm e} \leqslant \lambda_{\mathrm d}(A),
\end{equation}
for every $t > 0$ and $\sigma \in \Sigma$.

Arguing by contradiction, there exist $k \in \mathbb N$, $i_1, \dotsc, i_k \in \llbracket 1, N\rrbracket$, and $t_1, \dotsc, t_k \geqslant 0$ such that
\begin{equation}
\label{ContradictionInegStricte}
\norm*{e^{A_{i_k} t_k} \dotsm e^{A_{i_1} t_1}}_{\mathrm e} < e^{\lambda_{\mathrm d}(A) (t_1 + \dotsb + t_k)}.
\end{equation}
We claim that, with no loss of generality, $p_{i_1 i_2} \dotsm p_{i_{k-1} i_k} > 0$. Indeed, if it were not the case, then $p_{i_\ell i_{\ell + 1}} = 0$ for some $\ell \in \llbracket 1, k-1\rrbracket$. Since $P$ is strongly connected, there exist $r \in \mathbb N$ and $j_1, \dotsc, j_r \in \llbracket 1, N\rrbracket$ such that $j_1 = i_\ell$, $j_r = i_{\ell + 1}$, and $p_{j_1 j_2} \dotsm p_{j_{r-1} j_r} > 0$. Letting $s_1 = t_\ell$, $s_r = t_{\ell + 1}$, and $s_2 = \dotsb = s_{r-1} = 0$, we may then replace $e^{A_{i_{\ell + 1}} t_{\ell + 1}} e^{A_{i_{\ell}} t_{\ell}}$ by $e^{A_{j_r} s_r} \dotsm e^{A_{j_1} s_1}$ in \eqref{ContradictionInegStricte}. Repeating the previous construction for every $\ell$ such that $p_{i_\ell i_{\ell + 1}} = 0$, the claim is proved. Note also that, by continuity, \eqref{ContradictionInegStricte} holds for an open subset of times $t_1, \dotsc, t_k$ in $(0, +\infty)^k$. We have thus proved that there exists $t > 0$ and a set of positive probability of signals $\sigma \in \Sigma$ such that
\[
\frac{1}{t} \log\norm{\Phi_\sigma(t)}_{\mathrm e} < \lambda_{\mathrm d}(A).
\]
Combining with \eqref{eq:logBarabanov}, we deduce from \eqref{eq:LyapProba} that $\lambda_{\mathrm p}(\nu, \mu, P, A) < \lambda_{\mathrm d}(A)$.

To prove that \ref{PropFixedP-ContinuousTime-SkewSymmetric} implies \ref{PropFixedP-ContinuousTime-Spr}, notice that, if $A_i - \lambda_{\mathrm d}(A) \id$ is skew-symmetric for every $i \in \llbracket 1, N\rrbracket$, then, for every $k \in \mathbb N$, $i_1, \dotsc, i_k \in \llbracket 1, N\rrbracket$, and $t_1, \dotsc, t_k \geqslant 0$, the matrix
\[
e^{(A_{i_k} - \lambda_{\mathrm d}(A) \id) t_k} \dotsm e^{(A_{i_1} - \lambda_{\mathrm d}(A) \id) t_1} = e^{A_{i_k} t_k} \dotsm e^{A_{i_1} t_1} e^{-\lambda_{\mathrm d}(A) (t_1 + \dotsb + t_k)}
\]
is the product of orthogonal matrices, hence it is itself orthogonal and its spectral radius is equal to $1$. The conclusion follows.

Finally, to prove that \ref{PropFixedP-ContinuousTime-Spr} implies \ref{PropFixedP-ContinuousTime-SkewSymmetric}, notice that the semigroup
\[
\{e^{(A_{i_k} - \lambda_{\mathrm d}(A) \id) t_k} \dotsm e^{(A_{i_1} - \lambda_{\mathrm d}(A) \id) t_1} \mid k \in \mathbb N,\; i_1, \dotsc, i_k \in \llbracket 1, N\rrbracket,\; t_1, \dotsc, t_k \geqslant 0\}
\]
is irreducible since $A$ is irreducible. Moreover, \ref{PropFixedP-ContinuousTime-Spr} is equivalent to saying that the above semigroup has constant spectral radius. Then, using \cite[Theorem~2]{Protasov2017Matrix}, we deduce that, up to a common change of basis, $e^{(A_i - \lambda_{\mathrm d}(A) \id) t}$ is orthogonal for every $i \in \llbracket 1, N\rrbracket$ and $t \geqslant 0$, yielding the conclusion.
\end{proof}

\begin{remk}
\label{RemkProofPropFixedP}
The equivalences between \ref{PropFixedP-ContinuousTime-Equality}, \ref{PropFixedP-ContinuousTime-Barabanov}, and \ref{PropFixedP-ContinuousTime-Spr} only rely on the extremality of the norm $\norm{\cdot}_{\mathrm e}$ for $A$, and hence hold under the weaker assumption that $A$ is nondefective instead of irreducible (cf.\ Remark~\ref{RemkNondefective}). Notice also that the proof that \ref{PropFixedP-ContinuousTime-Spr} implies \ref{PropFixedP-ContinuousTime-Equality} requires neither the irreducibility of $A$ nor the strong connectedness of $P$.
\end{remk}

In the next result, we extend Proposition~\ref{PropFixedP-ContinuousTime} to the more general case where $A$ is not necessarily irreducible, but $P$ is still assumed to be strongly connected.

\begin{prop}
\label{CoroFixedP-ContinuousTime}
Let $P \in \mathcal M_{N}(\mathbb R)$ be a stochastic strongly connected matrix, $\nu$ be a probability vector of $\R^N$, $\mu>0$, and $A = (A_1, \dotsc,\allowbreak A_N) \allowbreak \in \mathcal M_d(\mathbb R)^N$. Then the following statements are equivalent:
\begin{enumerate}
\item\label{CoroFixedP-ContinuousTime-Equality} $\lambda_{\mathrm d}(A) = \lambda_{\mathrm p}(\nu, \mu, P, A)$.
\item\label{CoroFixedP-ContinuousTime-Spr} For every $k \in \mathbb N$, $i_1, \dotsc, i_k \in \llbracket 1, N\rrbracket$, and $t_1, \dotsc, t_k \geqslant 0$, it holds
\[\spr(e^{A_{i_k} t_k} \dotsm e^{A_{i_1} t_1}) = e^{\lambda_{\mathrm d}(A) (t_1 + \dotsb + t_k)}.\]
\end{enumerate}
\end{prop}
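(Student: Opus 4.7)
The direction \ref{CoroFixedP-ContinuousTime-Spr}$\Rightarrow$\ref{CoroFixedP-ContinuousTime-Equality} does not require any additional assumption and follows exactly as the corresponding step of Proposition~\ref{PropFixedP-ContinuousTime}: the inequality $\spr(\Phi_\sigma(t))\leqslant\|\Phi_\sigma(t)\|$ combined with \ref{CoroFixedP-ContinuousTime-Spr} gives $\lambda_{\mathrm d}(A)t\leqslant \log\|\Phi_\sigma(t)\|$ for every $\sigma\in\Sigma$ and $t\geqslant 0$. Taking expectation with respect to the Markov process and then $\limsup$ as $t\to\infty$ yields $\lambda_{\mathrm d}(A)\leqslant\lambda_{\mathrm p}(\nu,\mu,P,A)$, which combined with \eqref{LambdaPLeqLambdaD-Continuous} gives \ref{CoroFixedP-ContinuousTime-Equality}. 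As noted in Remark~\ref{RemkProofPropFixedP}, this argument needs neither the irreducibility of $A$ nor the strong connectedness of $P$.

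For the converse, the plan is to argue by induction on the dimension $d$. If $A$ is irreducible, Proposition~\ref{PropFixedP-ContinuousTime} immediately provides the spectral radius condition. Otherwise, $A$ admits a proper nontrivial common invariant subspace $V\subsetneq\mathbb R^d$, and in a basis adapted to $V$ and a complement, each $A_i$ takes the block upper triangular form $\begin{pmatrix} A^V_i & C_i \\ 0 & A^Q_i \end{pmatrix}$, where $A^V=(A_i|_V)_{i=1}^N$ and $A^Q=(\bar A_i)_{i=1}^N$ is the induced $N$-tuple on $\mathbb R^d/V$. By iteratively refining the decomposition until reaching irreducible (hence nondefective) blocks and using their extremal norms, one obtains the classical identity $\lambda_{\mathrm d}(A)=\max(\lambda_{\mathrm d}(A^V),\lambda_{\mathrm d}(A^Q))$.

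The key technical step, which I see as the main obstacle, is the analogous identity for the probabilistic Lyapunov exponent,
\[\lambda_{\mathrm p}(\nu,\mu,P,A)\ =\ \max\bigl(\lambda_{\mathrm p}(\nu,\mu,P,A^V),\,\lambda_{\mathrm p}(\nu,\mu,P,A^Q)\bigr).\]
The inequality $\geqslant$ is straightforward: since $V$ is invariant, $\Phi_\sigma(t)|_V=\Phi_\sigma^V(t)$ and the induced action on $\mathbb R^d/V$ equals $\Phi_\sigma^Q(t)$, whence $\|\Phi_\sigma(t)\|\geqslant c\max(\|\Phi_\sigma^V(t)\|,\|\Phi_\sigma^Q(t)\|)$ for some $c>0$. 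For the reverse inequality, the plan is to analyse the off-diagonal coupling $\int_0^t\Phi_\sigma^V(t,s)C_{\sigma(s)}\Phi_\sigma^Q(s)\,\dd s$ via extremal norms on the irreducible refinements of $A^V$ and $A^Q$, together with Furstenberg--Kesten-type almost-sure estimates $\frac{1}{t}\log\|\Phi_\sigma^V(t)\|\to\lambda_{\mathrm p}(\nu,\mu,P,A^V)$ (and analogously for $A^Q$), showing that the coupling contributes at most a polynomial factor in $t$ beyond the rate $\max(\lambda_{\mathrm p}(\nu,\mu,P,A^V),\lambda_{\mathrm p}(\nu,\mu,P,A^Q))$, a factor which is absorbed when passing to the Lyapunov exponent.

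Granting this decomposition, the induction closes immediately. Under \ref{CoroFixedP-ContinuousTime-Equality},
\[\lambda_{\mathrm d}(A)=\lambda_{\mathrm p}(\nu,\mu,P,A)=\max(\lambda_{\mathrm p}(A^V),\lambda_{\mathrm p}(A^Q))\leqslant\max(\lambda_{\mathrm d}(A^V),\lambda_{\mathrm d}(A^Q))=\lambda_{\mathrm d}(A),\]
forcing at least one of $A^V$ or $A^Q$ to satisfy $\lambda_{\mathrm p}=\lambda_{\mathrm d}$ with common value $\lambda_{\mathrm d}(A)$. Since that sub-system has dimension strictly less than $d$, the inductive hypothesis yields the spectral radius condition on all its word products with value $e^{\lambda_{\mathrm d}(A)T}$. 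Finally, because the spectral radius of a block upper triangular matrix equals the maximum of the spectral radii of its diagonal blocks, $\spr(e^{A_{i_k}t_k}\dotsm e^{A_{i_1}t_1})\geqslant e^{\lambda_{\mathrm d}(A)T}$, and the reverse inequality is provided by \eqref{LambdaDGeqLambdaWord}, establishing \ref{CoroFixedP-ContinuousTime-Spr} and closing the induction.
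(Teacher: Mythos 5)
Your proof is correct and follows essentially the same strategy as the paper's: the direction \ref{CoroFixedP-ContinuousTime-Spr}$\Rightarrow$\ref{CoroFixedP-ContinuousTime-Equality} via Remark~\ref{RemkProofPropFixedP}, and the converse via a simultaneous block-triangularization, reduction to the irreducible case through Proposition~\ref{PropFixedP-ContinuousTime}, and the squeeze between \eqref{LambdaDGeqLambdaWord} and the fact that the spectral radius of a block-triangular word is the maximum over the diagonal blocks. The only organizational difference is that you peel off one invariant subspace at a time and induct on the dimension, whereas the paper passes in one step to the full decomposition \eqref{DecomposeAj} with irreducible diagonal blocks; the two are equivalent. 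The one point worth flagging is the step you yourself single out as the main obstacle, namely the identity $\lambda_{\mathrm p}(\nu,\mu,P,A)=\max\bigl(\lambda_{\mathrm p}(\nu,\mu,P,A^V),\lambda_{\mathrm p}(\nu,\mu,P,A^Q)\bigr)$: you only sketch a Duhamel/variation-of-constants argument for the nontrivial inequality $\leqslant$, and controlling $\mathbb E[\log\norm{\cdot}]$ of the off-diagonal coupling does require some care. The paper does not prove this either; it invokes it as a known result (citing Gerencs\'er--Michaletzky for the probabilistic case and \cite[Proposition~2]{Chitour2012Marginal} for the deterministic one), so your treatment is on the same footing as the paper's, but if you want a self-contained argument you would need to complete that estimate rather than leave it as a plan.
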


\begin{proof}
As in the proof of Proposition~\ref{PropFixedP-ContinuousTime}, thanks to  Proposition~\ref{prop:condition-initiale}, we can suppose that $\nu$ is the unique invariant measure of $P$. 
Due to Remark~\ref{RemkProofPropFixedP}, we are only left to show that \ref{CoroFixedP-ContinuousTime-Equality} implies \ref{CoroFixedP-ContinuousTime-Spr}. As in \cite[Lemma~3.5]{Chitour2021Gap}, a key ingredient of the argument is a simultaneous block decomposition of the matrices $A_1, \dotsc, A_N$. The idea is the following: if $A_1, \dotsc, A_N$ admit a common proper subspace $V$ of dimension $d'$, then, up to a linear change of coordinates corresponding to a basis of $\mathbb R^d$ consisting of a basis of $V$ in its first $d'$ elements, each matrix $A_j$ can be written as $\left(\begin{smallmatrix}B_j & C_j \\ 0 & D_j\end{smallmatrix}\right)$ for some matrices $B_j, C_j, D_j$, $j \in \llbracket 1, N\rrbracket$, with $B_j\in  \mathcal M_{d'}(\mathbb R)$. By an immediate inductive argument, up to a linear change of coordinates, $A_1, \dotsc, A_N$ can be presented in block-tri\-an\-gu\-lar form as
\begin{equation}
\label{DecomposeAj}
A_j = \begin{pmatrix}
A_{j}^{(1)} & \ast & \ast & \cdots & \ast \\
0 & A_{j}^{(2)} & \ast & \cdots & \ast \\
0 & 0 & A_{j}^{(3)} & \ddots & \ast \\
\vdots & \vdots & \ddots & \ddots & \vdots \\
0 & 0 & 0 & \cdots & A_{j}^{(S)}
\end{pmatrix}, \qquad j \in \llbracket 1, N\rrbracket,
\end{equation}
for some appropriate integer $S$, with $A^{(s)} = (A_1^{(s)}, \dotsc, A_N^{(s)})$ irreducible for every $s \in \llbracket 1, S\rrbracket$. Both deterministic and probabilistic Lyapunov exponents are obtained as maxima of the corresponding Lyapunov exponents over the diagonal blocks (see, e.g., \cite[Proposition~2]{Chitour2012Marginal} for the deterministic case and \cite{Gerencser2008Stability} for the probabilistic one). Notice also that, for every $k \in \mathbb N$, $i_1, \dotsc, i_k \in \llbracket 1, N\rrbracket$, and $t_1, \dotsc, t_k \geqslant 0$, it holds
\begin{equation}
\label{eq:lambdaDGeqBlocks}
\lambda_{\mathrm d}(A) \geqslant \frac{\log\spr\Bigl(e^{A_{i_k} t_k} \dotsm e^{A_{i_1} t_1}\Bigr)}{t_1 + \dotsb + t_k} = \max_{s \in \llbracket 1, S\rrbracket} \frac{\log\spr\Bigl(e^{A_{i_k}^{(s)} t_k} \dotsm e^{A_{i_1}^{(s)} t_1}\Bigr)}{t_1 + \dotsb + t_k},
\end{equation}
where the inequality comes from \eqref{LambdaDGeqLambdaWord} and the equality follows from the simple fact that the spectral radius of a block-triangular matrix is equal to the maximum of the spectral radii over the diagonal blocks.

Let $\overline s \in \llbracket 1, S\rrbracket$ be the index such that $\lambda_{\mathrm p}(\nu, \mu, P, A)\allowbreak = \lambda_{\mathrm p}(\nu, \mu, P, A^{(\overline s)})$ and notice that, thanks to \ref{CoroFixedP-ContinuousTime-Equality} and \eqref{LambdaPLeqLambdaD-Continuous}, $\lambda_{\mathrm d}(A) = \lambda_{\mathrm d}(A^{(\overline s)})$. By Proposition~\ref{PropFixedP-ContinuousTime} and \eqref{eq:lambdaDGeqBlocks}, we deduce that, for every $k \in \mathbb N$, $i_1, \dotsc, i_k \in \llbracket 1, N\rrbracket$, and $t_1, \dotsc, t_k \geqslant 0$, it holds
\[
\lambda_{\mathrm d}(A) = \lambda_{\mathrm d}(A^{(\overline s)}) = \frac{\log\spr\Bigl(e^{A_{i_k}^{(\overline s)} t_k} \dotsm e^{A_{i_1}^{(\overline s)} t_1}\Bigr)}{t_1 + \dotsb + t_k} \leqslant \frac{\log\spr\Bigl(e^{A_{i_k} t_k} \dotsm e^{A_{i_1} t_1}\Bigr)}{t_1 + \dotsb + t_k} \leqslant \lambda_{\mathrm d}(A),
\]
yielding \ref{CoroFixedP-ContinuousTime-Spr}.
\end{proof}

\begin{remk}
As a byproduct of the block-decomposition argument in the above proof, we get another statement equivalent to \ref{CoroFixedP-ContinuousTime-Equality} and \ref{CoroFixedP-ContinuousTime-Spr}: there exist a linear change of variables and $p \in \llbracket 1, d\rrbracket$ such that, for every $i \in \llbracket 1, N\rrbracket$,
\[
A_i - \lambda_{\mathrm d}(A) \id = 
\begin{pmatrix}
\ast & \ast \\
0 & B_i \\
\end{pmatrix}
\]
with $B_i$ a $p \times p$ skew-symmetric matrix.

Moreover, any of the previous statements implies that
\[
\lambda(M) = {\lambda_{\mathrm d}(A)}, \qquad \forall M \in \co(A).
\]
Indeed, write $M = \beta_1 A_1 + \dotsb + \beta_N A_N$ with $\beta_1, \dotsc, \beta_N \in [0, 1]$ and $\beta_1 + \dotsb + \beta_N = 1$. Take $k = N$ and $i_j = j$ and $t_j = t \beta_j$ for $j \in \llbracket 1, N\rrbracket$ in \ref{CoroFixedP-ContinuousTime-Spr}. The conclusion follows by letting $t \to 0$.
\end{remk}

Finally, we can turn to the proof of Theorem~\ref{TheoPNotSC-ContinuousTime}.

\begin{proof}[Proof of Theorem~\ref{TheoPNotSC-ContinuousTime}]
According to Proposition~\ref{prop:condition-initiale}, \ref{TheoPNotSC-ContinuousTime-Equality} is equivalent to the fact that for all $i\in \cco 1,R\ccf$ such that $\mathcal I(i)$ is accessible from $\nu$, $\lambda_{\mathrm d}(A) = \lambda_{\mathrm p}(\nu^{[i]}, \mu, P, A)$. Replacing $P$ by $P_i$ and $A$ by $(A_j)_{j\in\mathcal I(i)}$, we are in a strongly connected case and Proposition~\ref{CoroFixedP-ContinuousTime} concludes.
\end{proof}

\section{Compactification of the space of Markov processes}\label{sec:compactification}

The aim of this section is to prove Theorem~\ref{Thm:CV-mu-pas-borne}, that is, that any sequence $(\nu_n,\mu_n,P_n)_{n\in\N}$ of Markov processes for $A=(A_1,\dots,A_N)$ admits a subsequence converging in law to a convexified Markov process for $A$. 

Notice that an important difference from more classical averaging results (such as  \cite[Section~2.5]{Benaim2014Stability} or \cite[Corollary~2.15]{benaim2019}) is that $P_n$ is not fixed and, in particular, the jump rates $\mu_n (P_n)_{i,j}$ may have different asymptotic behaviours as $n$ goes to infinity depending on $i,j$, which is why, when $\lim_{n\to \infty}\mu_n=+\infty$, the limit process is not necessarily a deterministic ODE $\dot x  = M x$ for some $M \in \co(A)$.

\subsection{Convexified Markov processes as limits of Markov processes}

First, let us show that any convexified Markov process for $A$ can be obtained as the limit of a sequence  of Markov processes for $A$. 
For the sake of clarity, let us stress that we only consider here the convergence of the continuous component $x(\cdot)$.

\begin{prop}\label{Prop:CVversAveraged}
Let $(x,\sigma)$ be a convexified Markov process for $A$. Then there exists a sequence $(x_n,\sigma_n)_{n\in\mathbb N}$ of Markov processes for $A$ such that, for all $T,\delta>0$,
\begin{equation*}
\mathbb P \po \sup_{t\in [0,T]} |x(t)-x_n(t)| > \delta \pf \underset{n\rightarrow+\infty}\longrightarrow 0\,.
\end{equation*}
\end{prop}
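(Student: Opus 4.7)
The plan is to construct a two-timescale Markov process $\sigma_n$ in which each state $j\in\llbracket 1,k\rrbracket$ of the convexified process is ``unfolded'' into its cluster $I_j\subset\llbracket 1,N\rrbracket$, with very fast jumps inside $I_j$ forcing $\sigma_n$ to spend, on average, fraction $\beta_{j,\ell}$ of its time in mode $A_\ell$ whenever the macroscopic state is $j$; the classical averaging principle then produces the effective dynamics $\dot x = B_j x$ in the limit.

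Concretely, write $B_j=\sum_{\ell\in I_j}\beta_{j,\ell}A_\ell$ and let $(\nu,\mu,P)$ be the parameters of $\sigma$ on $\llbracket 1,k\rrbracket$. For every $n\in\mathbb N$, I define $\sigma_n$ to be the continuous-time Markov chain on $\llbracket 1,N\rrbracket$ with initial law $\nu_n(\ell)=\nu(j)\beta_{j,\ell}$ for $\ell\in I_j$ (and $0$ otherwise) and with transition rates
\[
q^{(n)}_{\ell,\ell'}=\begin{cases}
n\,\beta_{j,\ell'}&\text{if }\ell,\ell'\in I_j,\ \ell\neq\ell',\\
\mu\, p_{j,j'}\,\beta_{j',\ell'}&\text{if }\ell\in I_j,\ \ell'\in I_{j'},\ j\neq j',
\end{cases}
\]
the rates for any state outside $\bigsqcup_j I_j$ being irrelevant since such states are never visited. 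Setting $\mu_n=n+\mu$ and $P_n=\id+Q^{(n)}/\mu_n$ produces admissible parameters $(\nu_n,\mu_n,P_n)$ of a Markov process for $A$. I then couple $\sigma$ and $\sigma_n$ so that the inter-cluster jumps of $\sigma_n$ coincide exactly with the jumps of $\sigma$: this is possible because the total rate at which $\sigma_n$ leaves $I_j$ to enter $I_{j'}$ is $\mu p_{j,j'}$, matching the jump rate of $\sigma$ from $j$ to $j'$. Under this coupling, $\pi(\sigma_n(t))=\sigma(t)$ for every $t\geqslant 0$ almost surely, where $\pi\colon\bigsqcup_j I_j\to\llbracket 1,k\rrbracket$ is the canonical projection.

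Setting $K=\max_\ell\norm{A_\ell}$ and writing the variation-of-constants identity
\[
\Phi_{\sigma_n}(t)-\Phi_\sigma(t)=\int_0^t A_{\sigma_n(s)}\po\Phi_{\sigma_n}(s)-\Phi_\sigma(s)\pf\dd s+R_n(t),
\]
with $R_n(t)=\int_0^t\po A_{\sigma_n(s)}-B_{\sigma(s)}\pf\Phi_\sigma(s)\,\dd s$, Gronwall's lemma yields $\sup_{t\in[0,T]}\norm{\Phi_{\sigma_n}(t)-\Phi_\sigma(t)}\leqslant e^{KT}\sup_{t\in[0,T]}\norm{R_n(t)}$, so the conclusion reduces to the convergence $\sup_{t\in[0,T]}\norm{R_n(t)}\to 0$ in probability. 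Conditioning on $\sigma$ and decomposing its path into sojourn intervals $[\tau_i,\tau_{i+1}]$ on which $\sigma\equiv j$, the restriction of $\sigma_n$ to such an interval is a fast Markov chain on $I_j$ with invariant law $\beta_{j,\cdot}$, under which the centered integrand $A_\ell-B_j$ has zero mean. A Poisson-equation martingale decomposition for this finite-state chain, combined with Doob's maximal inequality, gives that the contribution of each sojourn to $\sup_t\norm{R_n(t)}$ has second moment $O(1/n)$; summing over the almost surely finite collection of jumps of $\sigma$ on $[0,T]$ concludes.

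The main obstacle I anticipate is this last quantitative ergodic estimate: one needs uniformity in $t$ within each sojourn (hence the Doob argument) and one must absorb the $O(1/n)$ boundary corrections arising at each inter-cluster jump of $\sigma_n$, where the fast chain restarts out of equilibrium. A lighter alternative is to invoke directly the general two-timescale convergence result established in Appendix~\ref{Sec:MarkovDecomposition} (adapted from \cite{Landim2016}), which is designed precisely for this kind of averaging with non-trivial slow dynamics and delivers the required convergence of $x_n$ to $x$ without further computation.
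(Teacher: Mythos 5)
Your construction of $\sigma_n$ (fast intra-cluster resampling at rate $n$ according to $\beta_{j,\cdot}$, slow inter-cluster jumps whose total rates match those of $\sigma$, coupled so that the cluster projection of $\sigma_n$ equals $\sigma$) is exactly the one used in the paper, and your Gronwall reduction to $\sup_{t\in[0,T]}\norm{R_n(t)}\to 0$ in probability is the same averaging step the paper carries out via the Freidlin--Wentzell theorem. The only divergence is the final quantitative estimate, where you invoke a Poisson-equation martingale decomposition with Doob's maximal inequality on each sojourn interval of $\sigma$, while the paper partitions $[0,T]$ into a deterministic grid, compares with frozen chains $\tilde\sigma_n^b$, and computes the first and second moments of $\int_0^{T/R}\1_{\tilde\sigma_n^0(s)=j}\,\dd s$ explicitly from the closed-form law of the resampled chain; both routes deliver the required $O(1/n)$ variance bound, so your argument is correct and essentially identical to the paper's.
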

\begin{proof}
Denote  by $B_1,\dots,B_k$ the modes of $(x,\sigma)$ with $B_r = \sum_{j\in I_r} \pi_r(j) A_j$, where $I_1,\dots,I_k$ are 
pairwise disjoint nonempty subsets of $ \llbracket 1, N\rrbracket$ and, for all $r\in\cco 1,k\ccf$, $\pi_r$ is a probability measure on $I_r$. In particular, $(\sigma(t))_{t\geqslant 0}$ is a continuous-time Markov chain on $\cco 1,k\ccf$. Denote by $\mu$ its jump rate, $P$ its transition matrix, and  $\nu$ its initial probability law. Let $T_0 = 0$ and $(T_m)_{m\geqslant 0}$ be the jump times of $\sigma$, so that $(T_{m+1} - T_m)_{m\in\N}$ is an i.i.d.\ sequence of random variables distributed according to the exponential law with parameter $\mu>0$. We will construct for all $n\in\N$ a Markov chain $(\sigma_n(t))_{t\geqslant 0}$ on $\cco 1,N\ccf$ such that $\sigma_n(t) \in I_{\sigma(t)}$ for all $t\geqslant 0$ and which is moreover re-sampled at rate $n$ according to $\pi_{\sigma(T_m)}$ between consecutive slow jump times $T_m$ and $T_{m+1}$. 

More precisely, let $(N_t)_{t\geqslant 0}$ be a standard Poisson process with intensity $1$ and  $\mathcal U=(\mathcal U_{p,r})_{p\in\N,r\in\cco 1,k\ccf}$ be a family of independent random variables such that, for all $p\in \N$ and $r\in \cco 1,k\ccf$, $\mathcal U_{p,r}$ takes values in $I_r$ and is distributed according to $\pi_r$, with moreover $(N_t)_{t\geqslant 0}$, $\mathcal U$, and $\sigma$ independent. For all $n\in\N$ and $t\geqslant 0$, set $M^n_t = N_{nt} + \sum_{m\in \N} \1_{t\geqslant T_m}$, so that $(M^n_t)_{t\geqslant 0}$ is a Poisson process with intensity $\mu +n$ such that that all jumps of $\sigma$ are jumps of $M^n$. For all $n\in\N$ and $t\geqslant 0$, set $\sigma_n(t) = \mathcal U_{M_t^n,\sigma(t)}$.

Then, for all $n \in \N$, $(\sigma_n(t))_{t\geqslant 0}$ is a Markov chain on $\cco 1,N\ccf$ with initial condition $\nu_n$ and jump rates $\lambda_n(i,j)$ for $i,j\in\cco 1,N\ccf$ given as follows:
\begin{itemize}
\item for all $i\in \cco 1,N\ccf$, $\nu_n(i) = \nu(r)\pi_r(i)$ if $i\in I_r$ with $r\in\cco 1,k\ccf$;
\item for all $r\in \cco 1,k\ccf$ and all $i,j \in I_r$, $\lambda_n(i,j) = n \pi_r(j)$;
\item for all distinct $r,s\in\cco 1,k\ccf$ and all $i\in I_r$, $j\in I_s$, $\lambda_n(i,j) = \mu \pi_s(j) P(r,s)$;
\item for all $i \notin I:=\bigcup_{r=1}^k I_r$ and all $j\in \cco 1,N\ccf$, $\lambda_n(i,j)=\lambda_n(j,i) = 0$.
\end{itemize}

Remark that, for all $n\in\N$ and $t\geqslant 0$,    $\sigma_n(t) \in I_{\sigma(t)}$ (and in particular  $\sigma_n(t) \in I$), so that $\sigma$ is completely determined by $\sigma_n$.

The proof  is then similar to \cite[Lemma~2.14]{benaim2019}. From \cite[Chapter~2, Theorem~1.3]{FreidlinWentzell}, it is sufficient to prove that for all $t_0,T>0$, $\int_{t_0}^{t_0+T}A_{\sigma_n(s)}\dd s$ converges in probability as $n\rightarrow \infty$ towards $\int_{t_0}^{t_0+T}B_{\sigma(s)}\dd s$, uniformly with respect to $t_0$. As in the proof of \cite[Lemma~2.14]{benaim2019}, it is thus sufficient to prove that, for all $r\in \cco 1,k\ccf$ and all $j\in I_r$, $\int_{t_0}^{t_0+T}\1_{\sigma_n(s)=j}\dd s$ converges in probability as $n\rightarrow \infty$ towards $\pi_r(j)\int_{t_0}^{t_0+T}  \1_{\sigma(s)=r}\dd s$, uniformly with respect to $t_0$. 
By the Markov property, for all $\delta>0$,
\begin{multline*}
  \mathbb P\po \left| \int_{t_0}^{t_0+T}\po \1_{\sigma_n(s)=j}  - \pi_r(j)   \1_{\sigma(s)=r}\pf \dd s\right| > \delta \pf \\
\ \leqslant \ \sup_{i\in I} \mathbb P_i\po   \left| \int_{0}^{T} \po \1_{\sigma_n(s)=j}  -  \pi_r(j)   \1_{\sigma(s)=r}\pf\dd s\right| > \delta   \pf, 
\end{multline*}
where we recall that the subscript $i$ denotes the conditioning $\sigma_n(0)=i$.

Let $R$ be a positive integer. For all $b\in\cco 0,R-1\ccf$ and all $t\geqslant bT /R$, denote
\[\tilde \sigma_n^b(t) = \mathcal U_{N_{nt} - N_{nbT /R} + M^n_{bT /R},\sigma(bT /R) }.\]
In other words, $\tilde \sigma_n^b(t)$ is initialized at time $bT/R$ with $\tilde \sigma_n^b(bT/R) = \sigma_n(bT/R)$ and then is re-sampled on $I_{\sigma(bT/R)}$ at each jump of $N_{nt}$. In particular, up to $\inf\{T_m\mid m\in\N,\;T_m>bT/R\}$, the first slow jump time after time $bT/R$, we have $\sigma_n = \tilde \sigma_n^b$. 
In particular, $\sigma_n$ and $\tilde \sigma_n^b$ coincide on the interval 
$[bT/R,(b+1)T/R]$ if the latter does not contain any slow jump time $T_m$. 
Hence, using the identity $\pi_r(j) \1_{r'=r}=\pi_{r'}(j)$, we deduce the bound
\begin{align*}
 \Big| \int_{0}^{T} \big( \1_{\sigma_n(s)=j}  - {} &  \pi_r(j)   \1_{\sigma(s)=r}\big)\dd s\Big| \\
 & \leqslant  \sum_{b=0}^{R-1 }  \left| \int_{bT/R}^{(b+1)T/R} \po \1_{\sigma_n(s)=j}  -  \pi_r(j)   \1_{\sigma(s)=r}\pf\dd s\right|\\
 & \leqslant   \frac{T}{R} \sum_{m\in\N} \1_{T_m\leqslant  T} + \sum_{b=0}^{R-1 }  \left| \int_{bT/R}^{(b+1)T/R} \po \1_{\tilde \sigma_n^b(s)=j}  -  \pi_{\sigma(bT/R)}(j)\pf \dd s   \right|.
\end{align*}
Consider the events
\[\mathcal A_b \ = \ \left\{  \left| \int_{bT/R}^{(b+1)T/R} \po \1_{\tilde \sigma_n^b(s)=j}  -  \pi_{\sigma(bT/R)}(j)\pf \dd s   \right| > \frac{\delta}{2R}\right\} \]
for all $b\in\cco 0,R-1\ccf$ and
\[\mathcal A_R \ = \ \left\{ \frac{T}{R} \sum_{m\in\N} \1_{T_m\leqslant  T} > \frac{\delta}{2}\right\} \,.\]
Then
\[\left\{\left| \int_{0}^{T} \po \1_{\sigma_n(s)=j}  -  \pi_r(j)   \1_{\sigma(s)=r}\pf\dd s\right| > \delta\right\} \ \subset\  \bigcup_{b=0}^{R} \mathcal A_b\]
and thus
\begin{eqnarray*}
\mathbb P_i\po   \left| \int_{0}^{T} \po \1_{\sigma_n(s)=j}  -  \pi_r(j)   \1_{\sigma(s)=r}\pf\dd s\right| > \delta   \pf  & \leqslant &   \sum_{b=0}^{R} \mathbb P_i(\mathcal A_b)\,.
\end{eqnarray*}
Since the number of jumps occurring before time $T$ follows a Poisson distribution with intensity $T\mu$,
\[\mathbb P_i(\mathcal A_R) \ \leqslant \ \frac{2 T \mathbb E\po \sum_{m\in\N} \1_{T_m\leqslant  T}  \pf }{\delta R} \ \leqslant \ \frac{2T^2 \mu}{\delta R}\,.\]
Moreover, conditioning on the value $\sigma_n(bT/R)$, we get that for all $i\in I$ and $b\in \cco 0,R-1\ccf$,
\begin{eqnarray*}
\mathbb P_i\po \mathcal A_b\pf & \leqslant  & \sup_{u\in I} \mathbb P_u \po  \left| \int_{0}^{T/R} \po \1_{\tilde \sigma_n^0(s)=j}  -  \pi_{\sigma(0)}(j)\pf \dd s   \right| > \frac{\delta}{2R} \pf\\
& \leqslant & \frac{4R^2}{\delta^2}  \sup_{u\in I} \mathbb E_u \po  \left| \int_{0}^{T/R} \po \1_{\tilde \sigma_n^0(s)=j}  -  \pi_{\sigma(0)}(j)\pf \dd s   \right|^2 \pf\,.
\end{eqnarray*}
It only remains to prove that for all $u\in I$ the expectation vanishes as $n\rightarrow +\infty$. Indeed, in that case, we obtain that for all $T,\delta,R>0$,
\[\limsup_{n\rightarrow +\infty} \sup_{i\in I} \mathbb P_i\po   \left| \int_{0}^{T} \po \1_{\sigma_n(s)=j}  -  \pi_r(j)   \1_{\sigma(s)=r}\pf\dd s\right| > \delta   \pf  \ \leqslant \ \frac{2T^2 \mu}{\delta R}\,,\]
yielding the conclusion, since the left-hand side does not depend on $R$, which can thus be taken arbitrarily large.

Let us fix $r\in\cco 1,k\ccf$ and $u\in I_{r}$, and work conditionally to $\{\sigma_n(0)=u\}$. Under this event, $\tilde \sigma_n^0$ is simply a Markov chain starting at $u$ and re-sampled according to $\pi_r$ at rate $n$. We are back to a problem similar to \cite[Lemma~2.14]{benaim2019}, and follow the same proof.
  In particular, for all $j\in I_r$,
\[\mathbb P_u (\tilde \sigma_n^0 (t) = j) \ =\  e^{-nt}\1_{j=u} + (1-e^{-nt}) \pi_r(j)\,,\]
and thus 
\[\left| \mathbb E_u \po \int_0^{T/R} \po \1_{\tilde \sigma_n^0(s)=j}  -  \pi_{r}(j)\pf \dd s  \pf\right|  \ \leqslant \ \int_0^{T/R} e^{-ns}\dd s \ \leqslant \ \frac1n\,.\]
Similarly
\begin{eqnarray*}
\mathbb E_u\po \po \int_{0}^{T/R} \1_{\tilde \sigma_n^0(s)=j}\dd s\pf^2 \pf & = & \int_{0}^{T/R} \int_{0}^{T/R} \mathbb P_u(\tilde \sigma_n^0(s)=j,\ \tilde \sigma_n^0(t)=j) \dd s \dd t\\
& = & 2\int_{0}^{T/R} \int_{0}^{t} \mathbb P_u(\tilde \sigma_n^0(s)=j,\ \tilde \sigma_n^0(t)=j) \dd s \dd t \,.
\end{eqnarray*}
For all $s<t$,
\begin{align*}
|\mathbb P_u&\po \tilde \sigma_n^0(s)=j,\ \tilde \sigma_n^0(t)=j\pf- \pi_r(j)^2 | \\
 = {}& \left| \po e^{-n s}\1_{j=u} + (1-e^{-ns}) \pi_r(j)\pf \po e^{-n(t-s)}  + (1-e^{-n(t-s)}) \pi_r(j)\pf- \pi_r(j)^2 \right|\\
 \leqslant  {}&{
e^{-n t}\1_{j=u}+e^{-n s}(1-e^{-n(t-s)})\1_{j=u}\pi_r(j)+ (1-e^{-ns})e^{-n(t-s)}\pi_r(j)}\\
& {+
\left| (1-e^{-ns}) (1-e^{-n(t-s)})-1\right|\pi_r(j)^2 }\\
 \leqslant {}&  {e^{-nt}+ } 2 e^{-ns} + 2 e^{-n(t-s)}\,,
\end{align*}
so that
\[\left|\mathbb E_u\po \po \int_{0}^{T/R} \1_{\tilde \sigma_n^0(s)=j}\dd s\pf^2 \pf - T^2\pi_r(j)^2/R^2\right| \ \underset{n\rightarrow +\infty}\longrightarrow \ 0 \,.\] 
 This concludes since, denoting $Z_n = \int_{0}^{T/R} \1_{\tilde \sigma_n^0(s)=j}\dd s$, we have then obtained that
\[
\begin{split}
\mathbb E_u \po \left| Z_n - T \pi_r(j)/R \right|^2 \pf & = \mathbb E_u(Z_n^2) - 2 T\pi_r(j) \mathbb E_u(Z_n)/R + T^2\pi_{r}(j)^2/R^2\\
& \underset{n\rightarrow +\infty}\longrightarrow T^2 \pi_r(j)^2/R^2 - 2 T^2 \pi_r(j)^2/R^2  + T^2 \pi_{r}(j)^2/R^2  \ = \ 0\,. \qedhere
\end{split}
\]
\end{proof}

\subsection{Convexified Markov processes compactify the space of Markov processes}

We now want to prove a converse of Proposition~\ref{Prop:CVversAveraged}, namely that, from any sequence  $(x_n)_{n\in\N}$ of Markov processes for $A$, we can extract a subsequence that converges to a
convexified process, cf.\ Theorem~\ref{Thm:CV-mu-pas-borne} below. We start by treating separately the simple case where the maximal jump rate $\mu_n$ is bounded.

\begin{prop}\label{Prop:CV-mu-borne}
Consider a sequence of parameters $(\nu_n,\mu_n,P_n)_{n\in\N}$ of Markov processes for $A$. Suppose that $(\mu_n)_{n\in\N}$ is bounded. Then, up to extracting a subsequence, there exist parameters $(\nu,\mu,P)$ of  a Markov process for $A$ such that the following holds:  There exist Markov processes $(x_n,\sigma_n)_{n\in\N}$ and $(x,\sigma)$ associated respectively with $(\nu_n,\mu_n,P_n)_{n\in\N}$ and $(\nu,\mu,P)$ such that, for all $T>0$ and $\delta>0$,
\begin{eqnarray*}
\mathbb P \po   \sup_{t\in [0,T]}|x(t) - x_n(t)| > \delta \pf & \underset{n\rightarrow+\infty}\longrightarrow & 0\,.
\end{eqnarray*}
\end{prop}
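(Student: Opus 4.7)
The plan is to combine compactness of the parameter space with a uniformization-based coupling. First, since the set of probability vectors on $\cco 1,N\ccf$ and the set of $N\times N$ stochastic matrices are compact and $(\mu_n)_{n\in\N}$ is bounded by assumption, I would extract a subsequence (still indexed by $n$) along which $\nu_n \to \nu$, $P_n \to P^\ast$ and $\mu_n \to \mu^\ast \geqslant 0$. To handle cleanly a possibly vanishing limit rate, I would invoke the remark in Section~\ref{SecDefinitions} that the law of a Markov process is unchanged if $(\nu, \mu, P)$ is replaced by $(\nu, \mu/\alpha, \id + \alpha(P-\id))$. Setting $\bar\mu := 1 + \sup_n \mu_n < +\infty$ and $Q_n := (1 - \mu_n/\bar\mu)\id + (\mu_n/\bar\mu) P_n$, the process with parameters $(\nu_n, \mu_n, P_n)$ has the same law as the one with parameters $(\nu_n, \bar\mu, Q_n)$, and $Q_n \to Q := (1-\mu^\ast/\bar\mu)\id + (\mu^\ast/\bar\mu) P^\ast$. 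I would take $(\nu, \mu, P) := (\nu, \bar\mu, Q)$ as the limit triple; this remains a valid Markov process (since $\bar\mu > 0$) even when $\mu^\ast = 0$.

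Next, I would realize both sequences of Markov chains on a common probability space via a standard uniformization coupling. Let $(T_m)_{m \geqslant 1}$ be the jump times of a Poisson process of rate $\bar\mu$, and let $V, U_1, U_2, \dots$ be i.i.d.\ uniform random variables on $[0,1]$, globally independent. Using the inverse-CDF functions $F_\pi(v) := \min\{k \mid \sum_{j=1}^k \pi(j) \geqslant v\}$ for a probability vector $\pi$ and the row-wise analogue $G^R_i(u) := \min\{k \mid \sum_{j=1}^k R_{ij} \geqslant u\}$ for a stochastic matrix $R$, I set $\sigma_n(0) := F_{\nu_n}(V)$, keep $\sigma_n$ constant between consecutive $T_m$'s, and let $\sigma_n(T_m) := G^{Q_n}_{\sigma_n(T_m^-)}(U_m)$; the process $\sigma$ is built identically from $(\nu, Q)$. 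By construction, $\sigma_n$ and $\sigma$ have the prescribed marginal laws, and taking a common deterministic initial condition $x_0$ yields $x_n(t) := \Phi_{\sigma_n}(t) x_0$ and $x(t) := \Phi_\sigma(t) x_0$ with the correct marginal laws as well.

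Finally, I would show that the coupling forces $\sigma_n \equiv \sigma$ on $[0, T]$ with probability tending to $1$. The maps $F_\pi$ and $G^R_i$ are step functions with at most $N-1$ jump points each, depending continuously on the parameters, so entrywise convergence $\nu_n \to \nu$ and $Q_n \to Q$ gives $F_{\nu_n}(v) \to F_\nu(v)$ and $G^{Q_n}_i(u) \to G^Q_i(u)$ outside finite sets. Since $V$ and each $U_m$ are continuously distributed, each of these integer-valued sequences is almost surely eventually equal to its limit. Given $\varepsilon > 0$, I would choose $K \in \N$ such that the Poisson count of $(T_m)$ in $[0,T]$ exceeds $K$ with probability less than $\varepsilon$; then an induction over the at most $K$ jumps in $[0,T]$ combined with a union bound yields $\mathbb P(\sigma_n \neq \sigma \text{ on } [0,T]) \leqslant \varepsilon + o(1)$ as $n\to\infty$. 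On the event $\{\sigma_n \equiv \sigma \text{ on } [0,T]\}$ one has $x_n \equiv x$ on $[0,T]$, so $\mathbb P(\sup_{t\in[0,T]} |x(t) - x_n(t)| > \delta) \to 0$, and letting $\varepsilon \to 0$ concludes. There is no real obstacle in this bounded regime: the only mildly delicate point is ensuring the limit parameters $(\nu,\mu,P)$ make sense when $\mu^\ast = 0$, which is exactly what the uniformization achieves. The genuine difficulty of Section~\ref{sec:compactification} lies in the unbounded case $\mu_n \to +\infty$, leading to Theorem~\ref{Thm:CV-mu-pas-borne}.
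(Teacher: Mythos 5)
Your proof is correct, and it reaches the conclusion by a route that differs from the paper's in its technical execution. The paper also argues by coupling, but it uses the synchronous coupling of \cite{DurmusGuillinMonmarche}: it bounds the distance between the generators $\mathcal L_n$ and $\mathcal L$ by $2\abs{\mu_n-\mu}+2\mu\norm{P_n-P}_1$, invokes \cite[Theorem~11]{DurmusGuillinMonmarche} to control $\mathbb P(\exists t\in[0,T],\ x(t)\neq x_n(t))$, and, since the coupled limit process there depends on $n$, concludes via Skorokhod's representation theorem to obtain a single limit on a common space. You instead build an explicit uniformization coupling on a shared Poisson clock of rate $\bar\mu$ with inverse-CDF sampling from common uniforms; the agreement of $\sigma_n$ and $\sigma$ at each of the finitely many jumps in $[0,T]$ then follows from the a.e.\ convergence of the quantile maps, and a union bound finishes the argument. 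Your construction is self-contained (no external coupling theorem, no Skorokhod step, since the limit process is fixed once and for all and coupled simultaneously to every $x_n$), and it also handles explicitly the degenerate case $\mu_n\to 0$ via the reparametrization $(\nu,\mu/\alpha,\id+\alpha(P-\id))$ noted in Section~\ref{SecDefinitions}, a point the paper's proof glosses over. What the paper's approach buys in exchange is brevity and a template that reappears later in the section (the synchronous coupling is reused in the proofs of Proposition~\ref{prop:semigroup-pi} and Theorem~\ref{Thm:CV-mu-pas-borne}), whereas your uniformization trick is tied to the bounded-rate regime.
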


\begin{proof}
Up to an extraction we can suppose that $\nu_n$, $\mu_n$, and $P_n$ have limits as $n\rightarrow +\infty$ (in fact it would 
have been sufficient to assume that $\liminf_{n\rightarrow +\infty} \mu_n < +\infty$), that we denote by $\nu$, $\mu$, and $P$, respectively.

We are going to prove that for all $T,\varepsilon>0$, there exists $n_0\in\N$ such that, for all $n\geqslant n_0$, there exist Markov processes $(x_n,\sigma_n)$ and $(x,\sigma)$ associated respectively with $(\nu_n,\mu_n,P_n)$ and $(\nu,\mu,P)$ such that
\[\mathbb P \po   x(t) = x_n(t),\, \forall t\in[0,T] \pf \ \geqslant \ 1 - \varepsilon\,.\]
Remark that, in this statement, $(x,\sigma)$ may depend on $n$. Nevertheless, this yields the convergence of the distribution of $(x_n(t))_{t\geqslant0}$ to the distribution of $(x(t))_{t\geqslant 0}$ on all compact time intervals. The result then follows from Skorokhod's representation theorem.

The proof relies on a synchronous coupling of the Markov chains, namely, for each $n\in\N$, 
we can define simultaneously two Markov processes $(z_n(t))_{t\geqslant0}:=(x_n(t),\sigma_n(t))_{t\geqslant 0}$ and  $(z(t))_{t\geqslant0}:=(x(t),\sigma(t))_{t\geqslant 0}$ associated respectively with $(\nu_n,\mu_n,P_n)$  and $(\nu,\mu,P)$ on the same probability space in such a way that they have the same initial value with maximal probability (i.e., $\mathbb P(z_n(0)\neq z(0)) = \abs{\nu-\nu_n}_1 / 2$) and that, as long as they stay at the same position, they jump as much as possible at the same times and to the same locations. The precise construction of this coupling is given in \cite[Section~6]{DurmusGuillinMonmarche}, to which we refer for details. The Markov generator $\mathcal L$ on $\mathbb R^d\times \cco 1,N\ccf$ associated with  $(z(t))_{t\geqslant0}$ is given by
\[\mathcal L f(x,\sigma) \ = \ (A_\sigma x) \cdot \nabla_x f(x,\sigma) + \mu \sum_{j=1}^N (P)_{\sigma,j} \po f(x,j)-f(x,\sigma)\pf \,,\]
and similarly for the generator $\mathcal L_n$ of $z_n$. Then
\[\epsilon_n \ := \ \sup_{\|f\|_\infty \leqslant 1} \|\mathcal Lf-\mathcal L_n f\|_{\infty} \ \leqslant \ 2|\mu_n - \mu| + 2\mu\|P_n-P\|_1 \ \underset{n\rightarrow +\infty}\longrightarrow \ 0\,.\]
From \cite[Theorem~11]{DurmusGuillinMonmarche},
\begin{equation*}
\mathbb P \po \exists t\in[0,T],\    x(t) \neq x_n(t)\pf \leqslant \mathbb P \po z_n(0)\neq z(0)\pf + 1 - e^{-\varepsilon_n T}\ \underset{n\rightarrow +\infty}\longrightarrow \ 0\,. \qedhere
\end{equation*}
\end{proof}

For the rest of this section we consider a given sequence $(\nu_n,\mu_n,P_n)_{n\in\N}$ of Markov processes for $A$ such that, for at least a pair $(i,j)$ of distinct elements of  $\cco 1,N\ccf$, the jump rate $\lambda_n(i,j) = \mu_n (P_n)_{i,j}$ is unbounded.
  We will repeatedly consider successive  extractions of this sequence and keep writing them  $(\nu_n,\mu_n,P_n)_{n\in\N}$. First, up to extracting a subsequence, we suppose that $\lambda_n(i,j)\rightarrow +\infty$ for some distinct $i,j$.

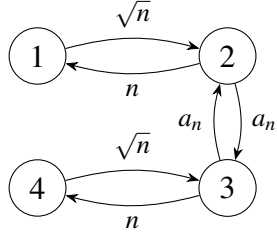
\begin{figure}[ht]
\centering
\begin{tikzpicture}
\node[circle, draw] (1) at (0, 0) {$1$};
\node[circle, draw] (2) at (2.5, 0) {$2$};
\node[circle, draw] (3) at (2.5, -1.75) {$3$};
\node[circle, draw] (4) at (0, -1.75) {$4$};
\draw[-Stealth] (1) to[out=15, in=165] node[midway, above] {\footnotesize $\sqrt{n}$} (2);
\draw[-Stealth] (2) to[out=-165, in=-15] node[midway, below] {\footnotesize $n$} (1);
\draw[-Stealth] (2) to[out=-75, in=75] node[midway, right] {\footnotesize $a_n$} (3);
\draw[-Stealth] (3) to[out=105, in=-105] node[midway, left] {\footnotesize $a_n$} (2);
\draw[-Stealth] (4) to[out=15, in=165] node[midway, above] {\footnotesize $\sqrt{n}$} (3);
\draw[-Stealth] (3) to[out=-165, in=-15] node[midway, below] {\footnotesize $n$} (4);
\end{tikzpicture}
\caption{A chain on $\llbracket 1,4\rrbracket$ whose jump rates all go to $+\infty$.}
\label{fig:chaine4}
\end{figure}

As in the proof of Proposition~\ref{Prop:CVversAveraged}, we would like to separate in the chain fast transitions that happen in arbitrarily small time as $n\rightarrow +\infty$ and slow transitions. It is not sufficient to consider the pairs $(i,j)$ such that $\lambda_n(i,j) \rightarrow +\infty$ or such that $\lambda_n(i,j)$ is of the order of $\mu_n$. Indeed, consider the example given in Figure~\ref{fig:chaine4}, where the values over the arrows denote the jump rates and we assume that $a_n$ goes to infinity as $n \to +\infty$ with the assumption that $\lim_{n\to+\infty} \frac{a_n}n =0$. For instance, starting from the state $2$, the Markov chain will go to $1$ with high probability (for large $n$). Then, each time it will go back to state $2$, it will have a probability $a_n /(n+a_n )$ to go to state $3$, from which it will go very fast to state $4$ with high probability. Since the time taken by transitions from $2$ to $1$, of order $1/n$, is negligible with respect to the time taken by transitions from $1$ to $2$ which is of order $1/\sqrt n$, and since the number of transitions from $1$ to $2$ before the chain reaches $3$ follows a geometric law with parameter $a_n /(n+a_n )$, the typical time to see a transition between $1$ and $4$ is of order $\sqrt n/a_n $. If $a_n = n^{1/3}$, for large $n$, it is unlikely to see such a transition before a given time $T$ (independent of $n$), so that the corresponding Markov process $x_n$ is expected to converge to the deterministic solution of $\dot x= A_1 x$. If $a_n = n^{2/3}$, transitions between $1$ an $4$ get arbitrarily fast for large $n$ and  a fast averaging phenomenon  leads to $\dot x= [(A_1+A_4)/2]x$ (the time spent in $2$ and $3$ being negligible). If $a_n=\sqrt n$, transitions between $1$ and $4$ occur at a rate of order $1$, so the limit process is $\dot x= A_{\sigma}x$, where $\sigma$ is an irreducible Markov chain on $\{1,4\}$.

Trying to adapt this analysis to a general chain leads to a recursive construction of several timescales at which different transitions occur. Such a rigorous construction is precisely the topic of the work \cite{Landim2016} by Landim and Xu (itself based on \cite{BELTRAN20111633} which deals with reversible Markov chains), upon which we will rely. Nevertheless, stated as they are, the results of \cite{Landim2016} do not fully match our needs. For this reason, we sightly reformulate them below. 

Let us check that the assumptions of \cite{Landim2016} are satisfied (at least up to extracting a subsequence). The first one is that the chain is strongly connected for all $n\in\N$. This does not necessarily hold in our case, but we will be able to reduce the problem to this case by a standard argument, see the proof of Theorem~\ref{Thm:CV-mu-pas-borne}. For this reason, we can suppose that $P_n$ is strongly connected for all $n\in\N$. 

Up to extracting a subsequence, we can suppose that
\begin{equation}\label{cond:lambda_n}
\forall (i,j)\in\cco 1,N\ccf^2\,,\qquad 
\begin{cases}
\text{ either }&\lambda_n(i,j)=0\ \forall n\in\N,\\
\text{ or }&\lambda_n(i,j)>0\ \forall n\in\N,
\end{cases}
\end{equation}
and we denote $\mathbb B = \{(i,j)\in\cco 1,N\ccf^2 \mid \lambda_n(i,j)>0\ \forall n\in\N\}$. Notice that, the chains being strongly connected, necessarily $\mathbb B \neq \emptyset$.
   
\begin{defi}
For $r\geqslant 2$, a family $\{(a_n^i)_{n\in\N}\}_{i\in\cco 1,r\ccf}$ of positive sequences is said to be \emph{ordered} if $\arctan(a_n^i/a_n^j)$ converges  as $n\rightarrow +\infty$ for all $i,j\in\cco 1,r\ccf$.

For a pair of ordered positive sequences we write $a_n\ll b_n$ (resp., $\simeq,\gg$) if $a_n/b_n\rightarrow 0$ (resp., $1$, $+\infty$) as $n\rightarrow +\infty$.
\end{defi}

Two positive sequences form an ordered pair up to extracting a subsequence, and   the same is true for a finite family of  sequences. As a consequence,     denoting by $\mathfrak{A}_m$, for all $m\in\N$, the set of functions $k:\mathbb B \rightarrow \N$ such that $\sum_{(i,j)\in\mathbb B} k(i,j) = m$, we see that, up to extracting a subsequence, by a diagonal argument,
\begin{equation}\label{Eq:CondOrdered}
\forall m\in\N,\ \left\{\left( \prod_{(i,j)\in\mathbb B} \lambda_n(i,j)^{k(i,j)}\right)_{n\in\N}\right\}_{k\in\mathfrak{A}_m} \text{ is ordered,}
\end{equation}
which is \cite[Assumption~2.6]{Landim2016}.

Let us now describe the consequences of this, established in \cite{Landim2016}. The following result is an adaptation from Theorems~2.1, 2.7, and 2.12 of \cite{Landim2016}. As such an adaptation requires the introduction of several definitions and notations, it is postponed to Appendix~\ref{Sec:MarkovDecomposition}.

\begin{theo}\label{Thm:Landim}
Consider for all $n\in\N$ a strongly connected Markov chain $(\sigma_n(t))_{t\geqslant 0}$ on $\cco 1,N \ccf$ with jump rates $(\lambda_n(i,j))_{i,j\in\cco 1,N\ccf}$. Under conditions~\eqref{cond:lambda_n} and \eqref{Eq:CondOrdered}, there exist $\mathfrak p\geqslant 1$, a decreasing sequence $ \mathfrak{n}_1,\dots,\mathfrak{n}_{\mathfrak{p}+1}$
in $\llbracket 1,N\rrbracket$,
a family of $\mathfrak{p}+1$ partitions $\{\mathcal E_1^j,\dots,\mathcal E_{\mathfrak{n}_j}^j,\Delta^j\}$ of $\cco 1,N\ccf$,  
$j\in \cco 1,\mathfrak{p}+1\ccf$,
and $\mathfrak{p}$ positive sequences  $ \theta^j = (\theta^j_n)_{n\in\N}$, $j\in \cco 1,\mathfrak{p}+1\ccf$, with the following properties:
\begin{enumerate}
\item \label{lan1_timescale} The timescales $\theta^j$ are increasing with respect to $j$, in the sense that for all $j\in\cco 1,\mathfrak{p}-1\ccf$, $\theta_n^{j} \ll \theta_n^{j+1}$. Moreover, the fastest
 timescale $\theta^1$ is obtained by taking $1/\theta_n^1 =\sum_{i,j=1}^N \lambda_n(i,j)$ for all $n\in\N$.
\item \label{lan2_CVMarkov}   Denote, for all $i \in \llbracket 1, N\rrbracket$ and $j\in\cco 1,\mathfrak{p}+1\ccf$,
\[\Psi^j(i) \ = \ \sum_{x=1}^{\mathfrak{n}_j} x \1_{i\in \mathcal E_x^j} \,, \]
which we call the \emph{coarse-grained variable at level $j$}. For all  $j\in\cco 1,\mathfrak{p}\ccf$, the transitions of $\Psi^j(\sigma_n)$ occur at the timescale $\theta^j$, and are approximately Markovian, in the sense that there exists a Markov chain $(X^j(t))_{t\geqslant 0}$ on $\cco 1,\mathfrak{n}_j\ccf$ that has   at least one nonzero jump rate and  such that, for all $x,y\in\cco 1,\mathfrak{n}_j\ccf$, $i\in \mathcal E_x^j$, and $t>0$,
\begin{equation}\label{eq:LandimCVX}
\mathbb P_i \po \Psi^j\po \sigma_n(t\theta_n^j)\pf =y\pf \ \underset{n\rightarrow +\infty}\longrightarrow \ \mathbb P_x\po X^j(t) =y\pf 
\end{equation}
and, for all $\delta>0$,
\begin{equation}\label{eq:LandimCVX_integral}
\mathbb P_i \po \left| \int_0^t \po \1_{\Psi^{j}(\sigma_n(s\theta_n^j)) = y} - \1_{  X^{j}(s) = y}\pf   \dd s\right| > \delta\pf  \  \underset{n\rightarrow +\infty}\longrightarrow  \ 0\,.
\end{equation}
More generally, \eqref{eq:LandimCVX} and \eqref{eq:LandimCVX_integral} still hold   if $\theta_n^j$ is replaced by $\tilde \theta_n^j$ with $\tilde \theta_n^j\simeq \theta_n^j$.
\item \label{lan3_Pastransition} There are no transitions between two successive timescales $\theta^j$ and $\theta^{j+1}$ or at a larger timescale than $\theta^{\mathfrak{p}}$, in the sense that, for all $j \in\cco 1, \mathfrak{p}\ccf$,  for all positive sequence $(\alpha_n)_{n\in\N}$ with $\theta_n^j \ll \alpha_n\ll \theta_n^{j+1}$ (where, for $j=\mathfrak{p}$, we set $\theta_n^{\mathfrak p+1}=+\infty$ for all $n\in\N$),  for all $x,y\in\cco 1,\mathfrak{n}_{j+1}\ccf$, $i\in \mathcal E_x^{j+1}$, and $t>0$,
\[\mathbb P_i \po \Psi^{j+1}\po \sigma_n(t\alpha_n)\pf =y\pf \ \underset{n\rightarrow +\infty}\longrightarrow \ \1_{x=y}  
\] 
and for all $\delta>0$,
\begin{equation*}
\mathbb P_i \po \abs*{ \int_0^t  \1_{\Psi^{j+1}(\sigma_n(s\alpha_n)) = y}    \dd s - t\1_{x = y}} > \delta\pf  \  \underset{n\rightarrow +\infty}\longrightarrow  \ 0\,.
\end{equation*}
\item \label{lan4Deltanegligeable} The time spent in $\Delta^j$ at a scale larger than $\theta^{j-1}$ is negligible, in the sense that    for all   $j\in\cco 2,\mathfrak p\ccf$, all positive sequence $(\alpha_n)_{n\in\N}$ with $\theta_n^{j-1} \ll \alpha_n$ and $t\geqslant 0$,
\[ \max_{i \in \cco 1,N\ccf} \mathbb E_i \po \int_0^t \1_{\sigma_n(s\alpha_n) 
\in \Delta^j} \dd s \pf \ \underset{n\rightarrow +\infty}\longrightarrow \ 0\,.\]
\item \label{lan5coarser} For $j=1$,  we have $\mathfrak{n}_1=N$,
$\mathcal E_i^1=\{i\}$ for every $i\in\llbracket 1,N\rrbracket$
 (i.e., all the points are separated), and $\Delta^1=\emptyset$. Then, the partitions get  coarser and are given by the recurrence classes of the limit chains. More precisely, for all  $j\in\cco 1,\mathfrak{p}\ccf$, the limit chain $X^j$ admits $\mathfrak{n}_{j+1}$ recurrence classes $\mathcal C_1^j,\dots,\mathcal C_{\mathfrak{n}_{j+1}}^j$, and for all $x\in \cco 1,\mathfrak{n}_{j+1}\ccf$, $\mathcal E_x^{j+1} = \bigcup_{y\in \mathcal C_x^j} \mathcal E_y^j$. Similarly, the set $\Delta^j$ is increasing, and is given by the transient points of $X^j$. More precisely, denoting $\mathcal T_j$ the set of transient points of $X^j$, then $\Delta^{j+1} = \Delta^j \cup \po \bigcup_{y\in\mathcal T_j} \mathcal E_y^j\pf$. In particular, for all $j\in\cco 1,\mathfrak{p}+1\ccf$ and all $x\in\cco 1,\mathfrak{n}_j\ccf$, $\mathcal E_x^j \neq \emptyset$ (while possibly $\Delta^j = \emptyset$). The last partition is trivial, in the sense that $\mathfrak{n}_{\mathfrak{p}+1}=1$.
 \item \label{lan6escape_expo} For all $j \in \cco 1,\mathfrak{p}+1\ccf$ and all $x\in \cco 1,\mathfrak{n}_j\ccf$, consider the escape time $\tau_x^j = \inf\{t>0 \mid  \sigma_n(t) \notin \mathcal E_x^j \cup \Delta^j\}$. For all $j \in \cco 1,\mathfrak{p}\ccf$,   $x\in \cco 1,\mathfrak{n}_j\ccf$, and all initial conditions $i\in\mathcal E_x^j$, $\tau_x^j/ \theta^j_n$ converges in law towards an exponential distribution with some parameter $r\geqslant 0$ (where $r=0$ means that $\tau_x^j  /\theta^j_n \rightarrow +\infty$ in probability). For $j=\mathfrak{p}+1$, for all $x\in \cco 1,\mathfrak{n}_{\mathfrak{p}+1}\ccf$ and all  initial conditions $i\in\mathcal E_x^{\mathfrak{p}+1}$, almost surely $\tau_x^{\mathfrak{p}+1} = +\infty$. 
\end{enumerate}
\end{theo}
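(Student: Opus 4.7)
The plan is to proceed by induction on the level $j$, constructing the partitions, timescales, and limit chains recursively by adapting the martingale-based machinery of \cite{Landim2016}. The base case $j=1$ is forced by Part~\ref{lan5coarser}: one takes $\mathcal{E}^1_i = \{i\}$, $\Delta^1 = \emptyset$, $\mathfrak{n}_1 = N$, and sets $\theta^1_n = 1/\sum_{i,j} \lambda_n(i,j)$. Under the ordered condition~\eqref{Eq:CondOrdered}, each rescaled rate $\theta^1_n \lambda_n(i,j) \in [0,1]$ admits a limit, and the generators of $\sigma_n(\cdot\theta^1_n)$ converge to that of a Markov chain $X^1$ on $\llbracket 1,N\rrbracket$, yielding~\eqref{eq:LandimCVX} at $j=1$ via a standard semigroup convergence argument; the integrated version~\eqref{eq:LandimCVX_integral} then follows from dominated convergence on bounded time intervals.

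For the inductive step, I would suppose levels $1,\dots,j$ are already constructed, together with the limit chain $X^j$. Part~\ref{lan5coarser} then prescribes the partition at level $j+1$: set $\mathcal{E}^{j+1}_x = \bigcup_{y\in\mathcal{C}^j_x}\mathcal{E}^j_y$, where $\mathcal{C}^j_1,\dots,\mathcal{C}^j_{\mathfrak{n}_{j+1}}$ are the recurrence classes of $X^j$, and $\Delta^{j+1} = \Delta^j \cup \bigcup_{y\in\mathcal{T}_j}\mathcal{E}^j_y$. The next timescale $\theta^{j+1}_n$ is identified as the inverse of the largest order of magnitude of the effective transition rates between two distinct classes $\mathcal{E}^{j+1}_x$ and $\mathcal{E}^{j+1}_{x'}$, obtained as weighted averages of the microscopic jump rates $\lambda_n(i,i')$ for $i\in\mathcal{E}^{j+1}_x$ and $i'\in\mathcal{E}^{j+1}_{x'}$ against the quasi-stationary measure of $\sigma_n$ within $\mathcal{E}^{j+1}_x$. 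The ordered condition~\eqref{Eq:CondOrdered} guarantees that all such averages have well-defined asymptotic orders, so that $\theta^{j+1}_n$ is determined up to equivalence, and that $\theta^j_n \ll \theta^{j+1}_n$ by construction. The recursion terminates at some index $\mathfrak{p}+1$ with $\mathfrak{n}_{\mathfrak{p}+1} = 1$, since $\mathfrak{n}_j$ strictly decreases as long as $X^j$ has more than one recurrence class.

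The convergence~\eqref{eq:LandimCVX} to the limit chain $X^{j+1}$ would then be obtained via a martingale problem for the coarse-grained process $\Psi^{j+1}(\sigma_n(\cdot\theta^{j+1}_n))$, using test functions constant on each block $\mathcal{E}^j_y$: on such functions, the fast part of the generator vanishes, and the remaining slow part converges by design of $\theta^{j+1}_n$ to the generator of $X^{j+1}$. The integrated convergence~\eqref{eq:LandimCVX_integral} and the negligibility statement in Part~\ref{lan4Deltanegligeable} both follow from occupation-measure estimates, based on the rapid equilibration of $\sigma_n$ within each well on the short timescale $\theta^j_n$, combined with Part~\ref{lan3_Pastransition} at the previous level. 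The exponential escape time in Part~\ref{lan6escape_expo} is then a consequence of a loss-of-memory property of the limit chain, established through capacity estimates in the spirit of \cite{BELTRAN20111633}.

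The hardest part will be the actual adaptation of \cite{Landim2016}, whose results are formulated for a fixed reversible chain observed at several slow metastable timescales governed by a small parameter, to our setting in which no reversibility is available, the chain itself varies with the integer parameter $n$, and the hierarchy of timescales reflects an essentially arbitrary scaling of the jump rates. Two points deserve particular care: reformulating the trace-on-recurrent-states technique so that it handles the successive absorption of transient sets $\mathcal{T}_j$ into $\Delta^{j+1}$ without reintroducing fast transitions, and establishing the robustness statement in Part~\ref{lan2_CVMarkov} asserting that convergence persists when $\theta^j_n$ is replaced by any equivalent sequence $\tilde{\theta}^j_n$. These uniform estimates constitute the bulk of the work to be deferred to Appendix~\ref{Sec:MarkovDecomposition}.
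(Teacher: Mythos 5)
Your overall architecture coincides with the paper's: a recursive construction in which the level-$(j+1)$ partition is read off from the recurrence classes and transient points of the limit chain $X^j$, the timescales are ordered thanks to \eqref{Eq:CondOrdered}, convergence of the coarse-grained variable is obtained through the trace-process machinery of Landim and Xu, and the remaining statements (occupation times, negligibility of $\Delta^j$, exponential escape times) are derived from the corresponding H-type conditions. The base case, the termination argument ($\mathfrak{n}_{j+1}<\mathfrak{n}_j$ because $X^j$ is nonconstant), and the list of points requiring adaptation of \cite{Landim2016} (non-reversibility, $n$-dependent chains, intermediate timescales, robustness under $\tilde\theta^j_n\simeq\theta^j_n$) are all consistent with the actual proof.

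There is, however, a genuine gap in your identification of the effective inter-class rates, and hence of the timescales $\theta^{j+1}_n$ and of the limit chains $X^{j+1}$. You define the effective rate from $\mathcal E^{j+1}_x$ to $\mathcal E^{j+1}_{x'}$ as a weighted average of the \emph{direct} microscopic jump rates $\lambda_n(i,i')$ with $i\in\mathcal E^{j+1}_x$, $i'\in\mathcal E^{j+1}_{x'}$. This is not the correct object: macroscopic transitions typically proceed through excursions across the transient set $\Delta^{j+1}$ (and through many microscopic jumps), so the relevant quantity is the trace-process rate $R_n(i,i')=\lambda_n(i)\,\mathbb P_i\po H_{\{i'\}}=H^+_{\mathcal E}\pf$, averaged against the restricted invariant measure $\pi_n(\cdot)/\pi_n(\mathcal E^{j+1}_x)$ (not a quasi-stationary measure); equivalently, the timescale is read off from the capacities $\mathrm{cap}_n(\mathcal E_x,\mathcal E_{\neq x})/\pi_n(\mathcal E_x)$, which is how the paper defines $\theta^{j}_n$ for $j\geqslant 2$. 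The example of Figure~\ref{fig:chaine4} shows the discrepancy is not cosmetic: the recurrence classes of $X^1$ there are $\{1\}$ and $\{4\}$, the direct jump rate between them is identically zero, yet the effective rate is of order $a_n/\sqrt n$ because the transition is realized by an excursion through the transient states $2$ and $3$, whose success probability per attempt is of order $a_n/n$. With your definition the construction would assign this transition to an infinite timescale for every choice of $a_n$, producing the wrong limit process whenever $a_n/\sqrt n$ does not tend to $0$. Since the ordered condition \eqref{Eq:CondOrdered} is precisely what guarantees (via the Markov-chain tree representation of invariant measures and capacities as ratios of sums of products of the $\lambda_n(i,j)$) that these hitting-probability-based rates admit asymptotic orders, the repair is not a one-line fix but requires working with capacities and trace processes throughout, as the paper does.
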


\begin{remk}
\label{Remk:Landim}
To help clarify the notations introduced in Theorem~\ref{Thm:Landim}, its construction is illustrated in Figure~\ref{FigCM}.

Figure~\ref{FigCM}~(a) illustrates the first step of the construction, representing in blue the states of $\sigma_n$, i.e., the elements of $\cco 1,N\ccf$ identified with $\mathcal E_1^1,\dots,\mathcal E_N^1$. We only represent the fastest transitions of $\sigma_n$ (black arrows), i.e., the transition whose rates are of the same order as the total jump rate of $\sigma_n$, which we call $1/\theta_n^1$. This defines recurrence classes (in dashed lines) and transient states. We call $\Delta^2$ the set of transient states, and $\mathcal E_1^2,\mathcal E_2^2,\mathcal E_3^2,\mathcal E_4^2$ the recurrence classes. After rescaling the time by $\theta_n^1$, $\sigma_n$ converges to some Markov chain $X^1$ on $\cco 1,N\ccf$.

Figure~\ref{FigCM}~(b) represents the second step of the construction. At a timescale larger than $\theta_n^1$, the time spent in $\Delta^2$, the transient states of $X^1$, is negligible and $\sigma_n$ is averaged  within the recurrence classes, so that we can approximately consider that the ``macroscopic states'' are the recurrence classes $\mathcal E_1^2,\mathcal E_2^2,\mathcal E_3^2,\mathcal E_4^2$ (corresponding to the coarse-grained variable $\Psi^2(\sigma_n)$). We can now consider the first timescale $\theta_n^2$ at which transitions between these macroscopic states (the black arrows in Figure~\ref{FigCM}~(b)) occur. Remark that, during any such a transition, $\sigma_n$ may have to cross $\Delta^2$. These transitions define a new Markov chain over the macroscopic states. More precisely, after rescaling the time by $\theta_n^2$, $\Psi^2(\sigma_n)$ converges to some Markov chain $X^2$ on $\{1,2,3,4\}$. In this example, the
recurrence classes of $X^2$ are $\mathcal C^2_1=\{1,2\}$ and $\mathcal C_2^2=\{4\}$, corresponding to the sets  $\mathcal E_1^3 = \mathcal E_1^2 \cup \mathcal E_2^2$ and $\mathcal E_2^3 = \mathcal E_4^2$ (in dashed lines). The last macroscopic state, $\mathcal E_3^2$, is transient for $X^2$, so $\Delta^3 = \Delta^2 \cup \mathcal E_3^2$ is the set of states that are negligible for any timescale larger than $\theta_n^2$.

The third step of the construction is represented in Figure~\ref{FigCM}~(c). At a timescale larger than $\theta_n^2$, the time spent in $\Delta^3$ is negligible and the chain $\sigma_n$ is averaged within either $\mathcal E_1^3$ or $\mathcal E_2^3$. So, at this scale, there are two macroscopic states. The next timescale is given by the transitions between them (afterwards, only one class remains and the construction stops, in other words here $\mathfrak{p}=3$). Besides, we are not interested in these transitions if they occur at a timescale larger than $\mathcal O(1)$ since, in this case, they are not seen in the limit convexified process.
\end{remk}

\definecolor{myblue}{rgb}{0.4471, 0.6235, 0.8118}
\begin{figure}
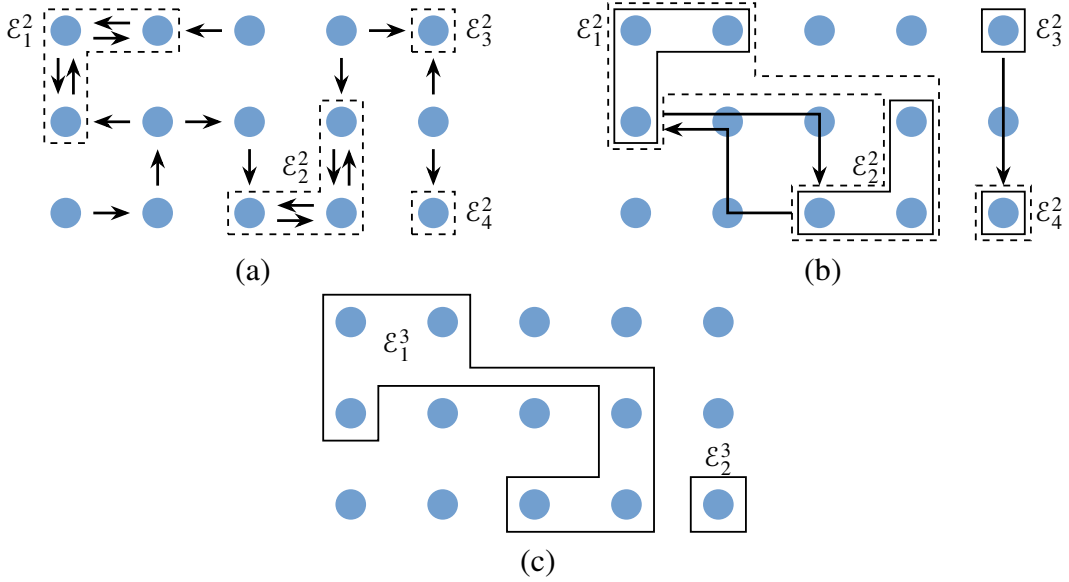
 
\centering
\begin{tabular}{@{} >{\centering} m{0.5\textwidth} @{} >{\centering} m{0.5\textwidth} @{}}
\resizebox{0.45\textwidth}{!}{\begin{tikzpicture}[x=40pt, y=40pt]
\input{fig_construction_states.tex}
\draw[very thick, -Stealth] (0.3, 0) -- (0.7, 0);
\draw[very thick, -Stealth] (1, 0.3) -- (1, 0.7);
\draw[very thick, -Stealth] (0.7, 1) -- (0.3, 1);
\draw[very thick, -Stealth] (1.3, 1) -- (1.7, 1);
\draw[very thick, -Stealth] ({1/12}, 1.3) -- ({1/12}, 1.7);
\draw[very thick, -Stealth] ({-1/12}, 1.7) -- ({-1/12}, 1.3);
\draw[very thick, -Stealth] (0.3, {2-1/12}) -- (0.7, {2-1/12});
\draw[very thick, -Stealth] (0.7, {2+1/12}) -- (0.3, {2+1/12});
\draw[very thick, -Stealth] (1.7, 2) -- (1.3, 2);
\draw[very thick, -Stealth] (2, 0.7) -- (2, 0.3);
\draw[very thick, -Stealth] (2.3, {-1/12}) -- (2.7, {-1/12});
\draw[very thick, -Stealth] (2.7, {1/12}) -- (2.3, {1/12});
\draw[very thick, -Stealth] ({3+1/12}, 0.3) -- ({3+1/12}, 0.7);
\draw[very thick, -Stealth] ({3-1/12}, 0.7) -- ({3-1/12}, 0.3);
\draw[very thick, -Stealth] (3, 1.7) -- (3, 1.3);
\draw[very thick, -Stealth] (3.3, 2) -- (3.7, 2);
\draw[very thick, -Stealth] (4, 1.3) -- (4, 1.7);
\draw[very thick, -Stealth] (4, 0.7) -- (4, 0.3);

\draw[thick, dashed] ({1+7/30}, {2+7/30}) -- ({-7/30}, {2+7/30}) -- ({-7/30}, {1-7/30}) -- ({7/30}, {1-7/30}) -- ({7/30}, {2-7/30}) -- ({1+7/30}, {2-7/30}) -- cycle;
\draw[thick, dashed] ({2-7/30}, {-7/30}) -- ({3+7/30}, {-7/30}) -- ({3+7/30}, {1+7/30}) -- ({3-7/30}, {1+7/30}) -- ({3-7/30}, {7/30}) -- ({2-7/30}, {7/30}) -- cycle;
\draw[thick, dashed] ({4-7/30}, {-7/30}) -- ({4-7/30}, {7/30}) -- ({4+7/30}, {7/30}) -- ({4+7/30}, {-7/30}) -- cycle;
\draw[thick, dashed] ({4-7/30}, {2-7/30}) -- ({4-7/30}, {2+7/30}) -- ({4+7/30}, {2+7/30}) -- ({4+7/30}, {2-7/30}) -- cycle;

\node at (-0.5, 2) {$\mathcal E_1^2$};
\node at (2.5, 0.5) {$\mathcal E_2^2$};
\node at (4.5, 2) {$\mathcal E_3^2$};
\node at (4.5, 0) {$\mathcal E_4^2$};
\end{tikzpicture}} & \resizebox{0.45\textwidth}{!}{\begin{tikzpicture}[x=40pt, y=40pt]
\input{fig_construction_states.tex}
\draw[very thick, -Stealth] (1.7, 0) -- (1, 0) -- (1, {1-1/12}) -- (0.3, {1-1/12});
\draw[very thick, -Stealth] (0.3, {1+1/12}) -- (2, {1+1/12}) -- (2, 0.3);
\draw[very thick, -Stealth] (4, 1.7) -- (4, 0.3);

\draw[thick] ({1+7/30}, {2+7/30}) -- ({-7/30}, {2+7/30}) -- ({-7/30}, {1-7/30}) -- ({7/30}, {1-7/30}) -- ({7/30}, {2-7/30}) -- ({1+7/30}, {2-7/30}) -- cycle;
\draw[thick] ({2-7/30}, {-7/30}) -- ({3+7/30}, {-7/30}) -- ({3+7/30}, {1+7/30}) -- ({3-7/30}, {1+7/30}) -- ({3-7/30}, {7/30}) -- ({2-7/30}, {7/30}) -- cycle;
\draw[thick] ({4-7/30}, {-7/30}) -- ({4-7/30}, {7/30}) -- ({4+7/30}, {7/30}) -- ({4+7/30}, {-7/30}) -- cycle;
\draw[thick] ({4-7/30}, {2-7/30}) -- ({4-7/30}, {2+7/30}) -- ({4+7/30}, {2+7/30}) -- ({4+7/30}, {2-7/30}) -- cycle;

\draw[thick, dashed] (1.7, -0.3) -- (3.3, -0.3) -- (3.3, 1.5) -- (1.3, 1.5) -- (1.3, 2.3) -- (-0.3, 2.3) -- (-0.3, 0.7) -- (0.3, 0.7) -- (0.3, 1.3) -- (2.7, 1.3) -- (2.7, 0.3) -- (1.7, 0.3) -- cycle;
\draw[thick, dashed] (3.7, -0.3) -- (3.7, 0.3) -- (4.3, 0.3) -- (4.3, -0.3) -- cycle;

\node at (-0.5, 2) {$\mathcal E_1^2$};
\node at (2.5, 0.5) {$\mathcal E_2^2$};
\node at (4.5, 2) {$\mathcal E_3^2$};
\node at (4.5, 0) {$\mathcal E_4^2$};
\end{tikzpicture}} \tabularnewline
(a) & (b) \tabularnewline
\end{tabular}
\resizebox{0.45\textwidth}{!}{\begin{tikzpicture}[x=40pt, y=40pt]
\input{fig_construction_states.tex}
\draw[thick] (1.7, -0.3) -- (3.3, -0.3) -- (3.3, 1.5) -- (1.3, 1.5) -- (1.3, 2.3) -- (-0.3, 2.3) -- (-0.3, 0.7) -- (0.3, 0.7) -- (0.3, 1.3) -- (2.7, 1.3) -- (2.7, 0.3) -- (1.7, 0.3) -- cycle;
\draw[thick] (3.7, -0.3) -- (3.7, 0.3) -- (4.3, 0.3) -- (4.3, -0.3) -- cycle;

\node at (-0.5, 2) {\hphantom{$\mathcal E_1^2$}}; % For keeping the same width as the other figures
\node at (4.5, 2) {\hphantom{$\mathcal E_3^2$}}; % For keeping the same width as the other figures

\node at (0.5, 1.75) {$\mathcal E_1^3$};
\node at (4, 0.5) {$\mathcal E_2^3$};
\end{tikzpicture}}

(c)
\caption{Construction of Theorem~\ref{Thm:Landim}, described in Remark~\ref{Remk:Landim}.}
\label{FigCM}
\end{figure}

From now on, we suppose that the sequence $(\nu_n,\mu_n,P_n)_{n\in\N}$  is such that $\mu_n\rightarrow +\infty$, that $P_n$ is strongly connected for all $n\in\N$, and that \eqref{cond:lambda_n} and \eqref{Eq:CondOrdered} hold, so that Theorem~\ref{Thm:Landim} holds. Up to extracting a subsequence, we assume that for all $j\in\cco 1,\mathfrak p\ccf$, the sequence $\theta^j$ is monotone (and in particular admits a limit in $[0,+\infty]$). Since we assumed that $\mu_n\rightarrow +\infty$, necessarily $\theta^1_n \rightarrow 0$. Let $h = \max\{j\in\cco 1,\mathfrak{p}\ccf \mid  \theta^j_n \rightarrow 0\}$, so that $\theta^h$ is the slowest of all the fast scales of the chain. We now have to distinguish whether there are slow transitions (i.e., occurring at a time of order $1$ with respect to $n$) or not.
\begin{itemize}
\item \textbf{Case 1.} If $h=\mathfrak{p}$ then by   Theorem~\ref{Thm:Landim}~\ref{lan3_Pastransition} , for any sequence $\alpha_n \simeq 1 \gg \theta^{\mathfrak{p}}_n$, for all $t>0$, $\Psi^{h+1}( \sigma_n(t\alpha_n))$ converges in law to the value at time $t$ of the constant Markov chain on $\{1\}$ (since, by Theorem~\ref{Thm:Landim}\ref{lan5coarser}, $\mathfrak{n}_{\mathfrak{p}+1} = 1$).
\item \textbf{Case 2.} If $h< \mathfrak{p}$ and $\theta^{h+1}_n$ does not converge to $+\infty$ as $n\rightarrow +\infty$, then it converges to some $\theta_*>0$. In particular, for all $\alpha_n\simeq 1$ and $t>0$, $\Psi^{h+1}(\sigma_n(t\alpha_n))$ converges in law towards  $X^{h+1}(t/  \theta_*)$. 
\item \textbf{Case 3.} If $h<\mathfrak{p}$ and $\theta^{h+1}_n\rightarrow +\infty$ as $n\rightarrow +\infty$ then, as in the first case, by  Theorem~\ref{Thm:Landim}~\ref{lan3_Pastransition}, for any sequence $(\alpha_n)_{n\in\N}$ with $\theta_n^h \ll \alpha_n \simeq 1 \ll \theta^{h+1}_n$, for all $t>0$, $\Psi^{h+1}( \sigma_n(t\alpha_n))$ converges in law to the value at time $t$ of the constant Markov chain on $\cco 1,\mathfrak{n}_{h+1}\ccf$.
\end{itemize}

Summarizing the above three cases, there is a Markov chain  on $\cco 1,\mathfrak{n}_{h+1}\ccf$, that we denote $(\tilde X^{h+1}(t))_{t\geqslant 0}$, such that, for all sequence $\alpha_n\simeq 1$,  $t>0$, $x,y\in\cco 1,\mathfrak{n}_{h+1}\ccf$ and $i\in\mathcal E_x^{h+1}$,
\[\mathbb P_i \po \Psi^{h+1}\po \sigma_n(t\alpha_n)\pf =y\pf \ \underset{n\rightarrow +\infty}\longrightarrow \ \mathbb P_x\po \tilde X^{h+1}(t) =y\pf.
\]
Reasoning similarly, we can ensure, in addition, that for all $\delta>0$,
\begin{equation}\label{Eq:CVintegraleXtilde}
\mathbb P_i \po \left| \int_0^t \po \1_{\Psi^{h+1}(\sigma_n(s\alpha_n)) = y} - \1_{  \tilde X^{h+1}(s) = y}\pf   \dd s\right| > \delta\pf  \  \underset{n\rightarrow +\infty}\longrightarrow  \ 0\,.
\end{equation}
Moreover, from Theorem~\ref{Thm:Landim}~\ref{lan4Deltanegligeable}, for all sequences $\alpha_n \simeq 1$ and $t\geqslant 0$,
\begin{equation}\label{Eq:Negligeable}
\max_{i\in\cco 1,N\ccf} \mathbb E_i \po \int_0^t \1_{\sigma_n(s \alpha_n)\in \Delta^{h+1}} \dd s\pf \ \underset{n\rightarrow +\infty}\longrightarrow \ 0\,.
\end{equation}

By analogy with the proof of Proposition~\ref{Prop:CVversAveraged}, the transitions of $\tilde X^{h+1}$ will play the role of the slow transition (at rate $\mu$ in  Proposition~\ref{Prop:CVversAveraged}). It remains to identify a Markov chain that plays the same role as $\tilde \sigma_n^b$ in the proof of Proposition~\ref{Prop:CVversAveraged}, namely a Markov chain that is equal to $\sigma_n$ up to the first slow transition and that mixes fast within the sets $\mathcal E_x^{h+1}$, $x\in\cco 1,\mathfrak{n}_{h+1}\ccf$. We could consider the trace process of $\sigma_n$ on $\mathcal E_x^{h+1}$ (see Section~\ref{Sec:MarkovDecomposition} for the definition), but for simplicity (in order to avoid the question of time change) we will consider another process.
 
For all $z\in \cco 1,\mathfrak{n}_{h+1}\ccf$, fix some $i_z \in \mathcal E_z^{h+1}$ (for instance, $i_z = \min \mathcal E_z^{h+1}$). We denote by $\tilde \sigma_n^z$ the Markov chain on $\Ebz:= \mathcal E_z^{h+1} \cup \Delta^{h+1}$ with jump rates $\tilde \lambda_n$ defined as follows:
\begin{itemize}
\item $\tilde \lambda_n(i,j) = \lambda_n(i,j)$ for all $i,j\in\Ebz$ with $j\neq i_z$;
\item $\tilde \lambda_n(i,i_z) = \lambda_n(i,i_z) + \sum_{k\notin \Ebz} \lambda_n(i,k)$ for all $i\in \Ebz$.
\end{itemize}
In other words, starting at a point of $\Ebz$, $\tilde \sigma_n^z$ has the same transitions as $\sigma_n$ except that all transitions that would leave $\Ebz$ are replaced by a transition to $i_z$. By  Theorem~\ref{Thm:Landim}~\ref{lan6escape_expo}, from an initial condition in $\mathcal E^{h+1}_z$, the first transition of $\sigma_n$ out of $\Ebz$ occurs at a rate at most of order $1$ (in $n$ as $n\to\infty$). On the other hand, let us prove that the mixing time of $\tilde \sigma_n^z$ is of order $\theta_n^h$ (in the sense of Equation~\eqref{eq:mixtime} below). Denote by $(P_t^{n,z})_{t\geqslant 0}$ the semigroup associated with $\tilde \sigma_n^z$, i.e., $P_t^{n,z}(i,j) = \mathbb P(\tilde \sigma_n^z(t)=j|\tilde \sigma_n^z(0)=i) $ for all $i,j\in\Ebz$.
 
\begin{prop}
\label{prop:semigroup-pi}
There exist $\rho,C>0$ such that for all $z\in \cco 1,\mathfrak{n}_{h+1}\ccf$ and all $n \in\N$, $(P_t^{n,z})_{t\geqslant 0}$ admits a unique invariant probability measure $\pi_n^z$ on $\Ebz$ and, for all $t\geqslant 0$ and all $i\in \Ebz$,
\begin{equation}\label{eq:mixtime}
|P_t^{n,z}(i,\cdot) - \pi_n^z| \ \leqslant \ C e^{-\rho t/\theta_n^h}\,.
\end{equation}
\end{prop}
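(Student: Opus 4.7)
The proposition has two parts: the existence and uniqueness of $\pi_n^z$, and the uniform mixing bound \eqref{eq:mixtime}. The first part is immediate because the finite-state continuous-time chain $\tilde\sigma_n^z$ is irreducible on $\Ebz$. Indeed, from $i_z$ the level-$h$ transitions make every state of $\mathcal E_z^{h+1}=\bigcup_{y\in\mathcal C_z^h}\mathcal E_y^h$ reachable (since $\mathcal C_z^h$ is a recurrence class of $X^h$), and every state in $\Delta^{h+1}$ is reachable from $i_z$ via transitions of $\sigma_n$ (thanks to the strong connectedness of $P_n$), possibly after being channeled back to $i_z$ by the redirection rule. Hence a unique invariant probability $\pi_n^z$ exists.

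The strategy for \eqref{eq:mixtime} is to prove a uniform Doeblin minorization: we look for $T_0>0$ and $\eta\in(0,1)$, independent of $n$ and $z$, such that
\[
P_{T_0\theta_n^h}^{n,z}(i,\{i_z\}) \ \geqslant \ \eta \qquad \forall z\in\llbracket 1,\mathfrak n_{h+1}\rrbracket,\ \forall i\in\Ebz,\ \forall n\in\N.
\]
Granted this, the classical Doeblin contraction applied to the discrete-time chain sampled at multiples of $T_0\theta_n^h$ gives $|P_{mT_0\theta_n^h}^{n,z}(i,\cdot)-\pi_n^z|\leqslant (1-\eta)^m$ for every $m\in\N$; interpolation for $t\in[mT_0\theta_n^h,(m+1)T_0\theta_n^h)$ yields \eqref{eq:mixtime} with $\rho=-\log(1-\eta)/T_0$ and $C=1/(1-\eta)$.

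To establish the minorization, we speed up time by $\theta_n^h$ and analyze $\hat\sigma_n^z(t):=\tilde\sigma_n^z(t\theta_n^h)$. The rescaled redirection rates $\theta_n^h\sum_{k\notin\Ebz}\lambda_n(i,k)$ vanish as $n\to\infty$, since exits from $\Ebz$ occur only at the timescale $\theta_n^{h+1}\gg\theta_n^h$ (or do not occur at all when $h=\mathfrak p$, in which case $\Ebz=\llbracket 1,N\rrbracket$ and $\tilde\sigma_n^z=\sigma_n$). Hence the finite-dimensional distributions of $\hat\sigma_n^z$ agree asymptotically with those of $\sigma_n(\cdot\theta_n^h)$ restricted to $\Ebz$, up to a redirection of exits to $i_z$. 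Theorem~\ref{Thm:Landim}, applied at level $h$, then yields the following picture. Starting from $i\in\Delta^{h+1}$, by iterating the escape-time estimates of item~\ref{lan6escape_expo} at level $h$---each escape from a class $\mathcal E_y^h$ with $y\in\mathcal T_h$ takes time of order $\theta_n^h$, and after $O(1)$ such escapes one reaches a recurrence class $\mathcal E_{z'}^{h+1}$---the process leaves $\Delta^{h+1}$ within a bounded rescaled time, in probability uniformly in $i$. If $z'\neq z$ the redirection sends the process to $i_z\in\mathcal E_z^{h+1}$; otherwise it is already in $\mathcal E_z^{h+1}$. Once in $\mathcal E_z^{h+1}$, the coarse-grained variable $\Psi^h(\hat\sigma_n^z)$ converges in law to the restriction of $X^h$ to its irreducible recurrence class $\mathcal C_z^h$, while within each $\mathcal E_y^h$ ($y\in\mathcal C_z^h$) the chain has equilibrated on faster timescales; consequently the fine-state distribution at time $T_0$ charges $i_z$ with a probability bounded below by some $\eta_0>0$ once $T_0$ is chosen large enough. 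Combining these two steps produces $n_0\in\N$ and $\eta_0>0$ such that the minorization holds with constant $\eta_0$ for all $n\geqslant n_0$ and all $z\in\llbracket 1,\mathfrak n_{h+1}\rrbracket$. For the finitely many $n<n_0$, irreducibility of $\tilde\sigma_n^z$ yields an individual lower bound; since $z$ takes finitely many values, taking minima gives the desired uniform $\eta$.

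The main obstacle is the careful transfer of Theorem~\ref{Thm:Landim} (which concerns $\sigma_n$) to the modified chain $\tilde\sigma_n^z$, together with the uniformity of the lower bound in the starting state. The modification alters rates out of $\Ebz$ by quantities vanishing at the timescale $\theta_n^h$, so the finite-dimensional laws of $\hat\sigma_n^z$ can be compared to those of the restricted process via a semigroup perturbation estimate in the spirit of \cite[Theorem~11]{DurmusGuillinMonmarche} already used in the proof of Proposition~\ref{Prop:CV-mu-borne}; combining this with the uniform escape-time control of $\Delta^{h+1}$ obtained by iterating item~\ref{lan6escape_expo} is the crux of the argument.
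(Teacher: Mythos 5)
Your overall skeleton is the same as the paper's: reduce everything to a uniform Doeblin minorization $P_{T_0\theta_n^h}^{n,z}(i,i_z)\geqslant\eta$ and conclude by the classical contraction argument (the paper takes $T_0=4$ and a $\liminf$, handling small $n$ exactly as you do). However, the two steps on which the minorization actually rests are not carried out correctly. First, your comparison of $\tilde\sigma_n^z$ with $\sigma_n$ relies on the claim that the rescaled redirection rates $\theta_n^h\sum_{k\notin\Ebz}\lambda_n(i,k)$ vanish for \emph{all} $i\in\Ebz$. This does not follow from the escape-time estimate and can fail: a state $i\in\Delta^{h+1}$ may carry an arbitrarily large exit rate out of $\Ebz$ and simply be visited rarely, so a rate-wise semigroup perturbation bound à la \cite[Theorem~11]{DurmusGuillinMonmarche} is not available. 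The paper instead couples the two chains synchronously \emph{from an initial condition in $\mathcal E_z^{h+1}$ only} and bounds the probability that they decouple before time $M\theta_n^h$ by the probability that $\sigma_n$ exits $\Ebz$, which is controlled by Theorem~\ref{Thm:Landim}~\ref{lan6escape_expo}; on $\Delta^{h+1}$ no comparison is attempted, one only shows that $\tilde\sigma_n^z$ hits $\mathcal E_z^{h+1}$ within time $\theta_n^h$ with nonvanishing probability.

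Second, and more seriously, the concentration at the single microstate $i_z$ is asserted via ``within each $\mathcal E_y^h$ the chain has equilibrated on faster timescales, consequently the fine-state distribution charges $i_z$ with probability bounded below.'' Equilibration of $\sigma_n$ inside a fast class gives no such bound: the relative invariant mass of a fixed microstate inside a fast class may tend to $0$ with $n$. The nonvanishing lower bound must come from the \emph{limit} chains, and this requires descending the whole hierarchy: one shows by induction on $j=h,h-1,\dots,1$ that from any point of $\mathcal E_{x_{j+1}}^{j+1}$ the chain reaches $\mathcal E_{x_j}^j$ after a time $\simeq\theta_n^j$ with probability bounded below (using Theorem~\ref{Thm:Landim}~\ref{lan2_CVMarkov} and the irreducibility of $X^j$ on its recurrence class $\mathcal C_{x_{j+1}}^j$), until level $1$ where $\mathcal E_{x_1}^1=\{i_z\}$. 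Concatenating these steps via the Markov property forces one to also prove a return estimate $\liminf_n\inf_{s\in[m,M]}P_{s\theta_n^j}^n(i_z,i_z)>0$ that is uniform over a whole range of times (the paper's Step~5), because the intermediate arrival times are random; your sketch contains no analogue of this. A minor point: your claim that $\tilde\sigma_n^z$ is irreducible on all of $\Ebz$ is also unjustified (a state of $\Delta^{h+1}$ reachable from $i_z$ only through paths leaving $\Ebz$ is not reachable for $\tilde\sigma_n^z$); what the Doeblin condition actually gives, and what suffices, is a unique recurrence class.
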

\begin{proof}
The statement does not depend on the norm used in the left hand side of \eqref{eq:mixtime}. In the following, we consider the total variation distance $\abs{x} = \sum_{i=1}^N \abs{x_i}$.
 
Since there is a finite number of values $z\in \cco 1,\mathfrak{n}_{h+1}\ccf$, it is sufficient to prove the result for each one, so let $z$ be fixed. 

Assuming that $P_{4\theta_n^h}^{n,z}$ satisfies the Doeblin condition 
\begin{eqnarray}\label{Eq:DoeblinFast}
\liminf_{n\rightarrow +\infty} \min_{i\in\Ebz} P_{4\theta_n^h}^{n,z}(i,i_z) & > &0\,,
\end{eqnarray}
we deduce the conclusion of the proposition by the following classical argument. By \eqref{Eq:DoeblinFast}, $P^{n,z}$ has a unique recurrence class, since all points can go to $i_z$, and thus a unique invariant measure $\pi_n^z$ (supported by this recurrence class). Let $\alpha\in (0,1)$ be such that, for $n$ large enough,
 \[\min_{i\in\Ebz}   P_{4\theta_n^h}^{n,z}(i,i_z) \geqslant \ \alpha\,.\]
For the the total variation, we have (see e.g.\ \cite[Theorem~S7]{DurmusGuillinMonmarche2} with $V=1$)
\[\sup_{i\in \Ebz} |P_{t}^{n,z}(i,\cdot) - \pi_n^z| \ \leqslant \ 2 (1-\alpha)^{\lfloor t/(4\theta_n^h) \rfloor} \ \leqslant \  \frac{2}{1-\alpha}e^{t \ln(1-\alpha) /(4\theta_n^h)  }\,,\]
which concludes the proof of \eqref{eq:mixtime}.

We are left to prove that \eqref{Eq:DoeblinFast} holds true and for that purpose we introduce some useful notations. In the sequel we say that a sequence $(a_n)_{n\in\N}$ is nonvanishing if $\liminf_{n\rightarrow +\infty} a_n \allowbreak >0$. Let $x_1,\dots,x_{h+1}$ be such that
\[i_z \in \mathcal E_{x_1}^1 \subset \dots \subset \mathcal E_{x_{h+1}}^{h+1} \,.\]
In other words, $x_1= i_z$, $x_{h+1}=z$, and for all $j\in\cco 1,h+1\ccf$ and $n\in \N$, $x_j=\Psi^j(i_z)$. Denote by $(P_t^n)_{t\geqslant 0}$ the semigroup associated with $\sigma_n$. Also, set $\theta_n^0 = 0$ for all $n\in \N$. The strategy of the proof of \eqref{Eq:DoeblinFast} consists in finding a path from any $i \in\Ebz$ to $i_z$ that has a nonvanishing probability. An informal description of such a path is given in Figure~\ref{FigChemin}.
 
\definecolor{bgblue}{rgb}{0.3490, 0.4353, 0.8941}
\definecolor{bggreen}{rgb}{0.3490, 0.8941, 0.4353}
\definecolor{bgyellow}{rgb}{0.9451, 0.8863, 0.2196}
\definecolor{nodepurple}{rgb}{0.3608, 0.1765, 0.5686}
\definecolor{nodepink}{rgb}{0.9686, 0.6314, 0.6039}
\begin{figure}
\centering
\resizebox{0.67\textwidth}{!}{\begin{tikzpicture}[x=40pt, y=40pt]

\draw[fill=bgblue] ({-11/30}, {-11/30}) -- ({2+11/30}, {-11/30}) -- ({2+11/30}, {2+11/30}) -- ({-11/30}, {2+11/30}) -- cycle;
\draw[fill=bgblue] ({4-11/30}, {-11/30}) -- ({5+11/30}, {-11/30}) -- ({5+11/30}, {11/30}) -- ({4-11/30}, {11/30}) -- cycle;
\draw[fill=bgblue] ({4-11/30}, {1-11/30}) -- ({5+11/30}, {1-11/30}) -- ({5+11/30}, {2+11/30}) -- ({4-11/30}, {2+11/30}) -- cycle;

\draw[fill=bggreen] ({-9/30}, {-9/30}) -- ({1+9/30}, {-9/30}) -- ({1+9/30}, {9/30}) -- ({-9/30}, {9/30}) -- cycle;
\draw[fill=bggreen] ({-9/30}, {1-9/30}) -- ({1+9/30}, {1-9/30}) -- ({1+9/30}, {2+9/30}) -- ({-9/30}, {2+9/30}) -- cycle;
\draw[fill=bggreen] ({2-9/30}, {-9/30}) -- ({2+9/30}, {-9/30}) -- ({2+9/30}, {2+9/30}) -- ({2-9/30}, {2+9/30}) -- cycle;
\draw[fill=bggreen] ({4-9/30}, {-9/30}) -- ({4+9/30}, {-9/30}) -- ({4+9/30}, {9/30}) -- ({4-9/30}, {9/30}) -- cycle;
\draw[fill=bggreen] ({5-9/30}, {-9/30}) -- ({5+9/30}, {-9/30}) -- ({5+9/30}, {9/30}) -- ({5-9/30}, {9/30}) -- cycle;
\draw[fill=bggreen] ({4-9/30}, {1-9/30}) -- ({5+9/30}, {1-9/30}) -- ({5+9/30}, {2+9/30}) -- ({4-9/30}, {2+9/30}) -- cycle;

\draw[fill=bgyellow] ({-7/30}, {-7/30}) -- ({1+7/30}, {-7/30}) -- ({1+7/30}, {7/30}) -- ({-7/30}, {7/30}) -- cycle;
\draw[fill=bgyellow] ({-7/30}, {1-7/30}) -- ({1+7/30}, {1-7/30}) -- ({1+7/30}, {1+7/30}) -- ({-7/30}, {1+7/30}) -- cycle;
\draw[fill=bgyellow] ({-7/30}, {2-7/30}) -- ({1+7/30}, {2-7/30}) -- ({1+7/30}, {2+7/30}) -- ({-7/30}, {2+7/30}) -- cycle;
\draw[fill=bgyellow] ({2-7/30}, {-7/30}) -- ({2+7/30}, {-7/30}) -- ({2+7/30}, {7/30}) -- ({2-7/30}, {7/30}) -- cycle;
\draw[fill=bgyellow] ({4-7/30}, {-7/30}) -- ({4+7/30}, {-7/30}) -- ({4+7/30}, {7/30}) -- ({4-7/30}, {7/30}) -- cycle;
\draw[fill=bgyellow] ({5-7/30}, {-7/30}) -- ({5+7/30}, {-7/30}) -- ({5+7/30}, {7/30}) -- ({5-7/30}, {7/30}) -- cycle;
\draw[fill=bgyellow] ({2-7/30}, {1-7/30}) -- ({2+7/30}, {1-7/30}) -- ({2+7/30}, {2+7/30}) -- ({2-7/30}, {2+7/30}) -- cycle;
\draw[fill=bgyellow] ({4-7/30}, {1-7/30}) -- ({5+7/30}, {1-7/30}) -- ({5+7/30}, {2+7/30}) -- ({4-7/30}, {2+7/30}) -- cycle;

\input{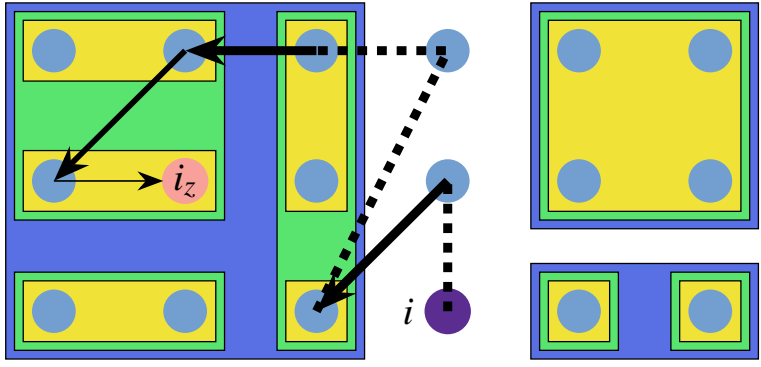}
\foreach \y in {0, ..., 2} {
	\fill[myblue] (5, \y) circle[radius={1/6}];
}
\fill[nodepurple] (3, 0) circle[radius={1/6+0.01}];
\fill[nodepink] (1, 1) circle[radius={1/6+0.01}];

\draw[line width=2.75, dashed] (3, 0) -- (3, 1);
\draw[line width=2.75, {-Stealth[scale=0.8]}] (3, 1) -- (2, 0);
\draw[line width=2.75, dashed] (2, 0) -- (3, 2) -- (2, 2);
\draw[line width=2.75, {-Stealth[scale=0.8]}] (2, 2) -- (1, 2);
\draw[line width=1.75, -Stealth] (1, 2) -- (0, 1);
\draw[line width=0.5, {-Stealth[scale=1.5]}] (0, 1) -- ({1-1/6}, 1);

\node at (2.7, 0) {$i$};
\node at (1, 1) {$i_z$};
\end{tikzpicture}}
\caption{At the timescale $\theta^1$, the state $i_z$, which is also $\mathcal E_{i_z}^1$, is averaged in some recurrence class $\mathcal E_{x_2}^2$ (in yellow). Then, at the slower timescale $\theta^2$, this class is itself averaged with other yellow classes, in a coarser $\mathcal E_{x_3}^3$ (in green) which is itself averaged at the slower timescale $\theta^3$ with some other green classes to give a coarser $\mathcal E_{x_4}^4$ (in blue). Here, $h=3$, i.e., there is no more fast averaging (there may be slow transitions  at order $1$ between several blue classes, but we do not consider them here). Now, starting from some point $i$ (in $\Delta^4$ here), after a time $\theta^3$, the chain  $\sigma_n$ has a positive (nonvanishing) probability to be in one of the blue classes (and thus $\tilde \sigma_n^z$ to be in the blue class of $i_z$, since transitions to other blue classes are replaced by a transition to $i_z$; and after reaching this blue class the transitions of $\sigma_n$ and $\tilde \sigma_n^z$ are similar in the timescale $\theta^3$ since a transition to a different blue class is unlikely). Then, starting in the blue class of $i_z$, i.e., in some green class within $\mathcal E_{x_4}^4$,   the chain has a positive probability to be in $\mathcal E_{x_3}^3$,  the green class of $i_z$, in a time $\theta^3$ (or equivalently $\theta^3 - \theta^2$) as this is the timescale of motion between green classes and all green classes within a given blue class are linked at this timescale (by definition of the blue classes as recurrence classes). Similarly, from there, the chain has a positive probability to be in $\mathcal E_{x_2}^2$, the yellow class of $i_z$, in a time $\theta^2$ (or $\theta^2-\theta^1$). Finally, starting from any point in $\mathcal E_{x_2}^2$, it has a positive probability to be at $i_z$ in a time $\theta^1$. The arrows represent the transitions (the timescale being represented by the thickness of the arrow). The dotted lines are here to remind that a ``macroscopic transition'' happening between two classes (or from the point $i\in\Delta^4$ to some point of the blue class $\mathcal E_{x_4}^4$) may be constituted of many (possibly fastest) ``microscopic'' transitions (that may cross  $\Delta^4$, which explains why $\tilde \sigma_n^z$ is defined on $\overline{\mathcal E}_{x_4}^4$ and not on  $ \mathcal E_{x_4}^4$).}\label{FigChemin}
\end{figure}

We decompose the argument in several steps. 
 
\medskip

\noindent\textbf{Step 1.} Let us prove that, from any initial condition in $\Ebz$, the probability that $\tilde\sigma^z_n$ hits $\mathcal E_z^{h+1}$ before the time $\theta^h_n$ is nonvanishing. More precisely denote 
\[\tau_* = \inf\{t\geqslant 0 \mid \sigma_n(t) \notin \Delta^{h+1}\} = \inf\{t\geqslant 0\mid \tilde \sigma_n^z(t) \in\mathcal E_z^{h+1}\} \,,\]
where $\sigma_n$ and $\tilde \sigma^z_n$ are synchronously coupled (as in the proof of Proposition~\ref{Prop:CV-mu-borne}, see \cite[Section~6]{DurmusGuillinMonmarche} for details) with the same initial condition. It is clear that, for all  initial conditions $i\in\Delta^{h+1}$,
\begin{equation}\label{eq:tau*}
\liminf_{n\rightarrow +\infty} \mathbb P_i(\tau_* \leqslant \theta_n^h) >0\,. 
\end{equation}
Indeed, the time spent in $\Delta^{h}$ is negligible at the timescale $\theta_n^h$ (Theorem~\ref{Thm:Landim}~\ref{lan4Deltanegligeable}), so the probability for $\sigma_n$ to leave $\Delta^h$ before time $\theta^h_n/2$ goes to $1$ as $n\rightarrow +\infty$. Then, starting from a point   $i'\in \Delta^{h+1}\setminus \Delta^h$ (which corresponds to the transient points of $X^h$),  we can use Theorem~\ref{Thm:Landim}~\ref{lan2_CVMarkov} to see that
\[\mathbb P_{i'} \po \tau_* \leqslant \theta_n^h/2\pf \geqslant \mathbb P_{i'} \po \Psi^h(\sigma_n(\theta^h_n/2)) \notin \mathcal T_h\cup\{0\}\pf \underset{n\rightarrow +\infty}\longrightarrow \mathbb P_{\Psi^h(i')} \po X^h(1/2) \notin \mathcal T_h\pf >0\]
(where we recall that, by definition, $\Psi^h(\sigma)=0$ means that $\sigma \in \Delta^h$). The strong Markov property concludes the proof of \eqref{eq:tau*}. 
 
\medskip
  
\noindent  \textbf{Step 2.} Starting from an initial condition  $i\in \mathcal E_z^{h+1}$, we consider again the synchronous coupling of $\sigma_n$ and $\tilde \sigma_n^z$. The two chains are then equal up to the first time at which $\sigma_n$ hits $\mathcal E_y^{h+1}$ for some $y\neq z$. According to Theorem~\ref{Thm:Landim}~\ref{lan6escape_expo}, this occurs at a time of order at least 1. In other words, for any $M>0$, the probability that this happens before time $M\theta^h_n$ goes to $0$ as $n\rightarrow +\infty$. Hence, for all $M>0$,
\[  \sup_{s\in [0,M\theta_n^{h}]}  | P_{s}^{n}(i,u)-P_s^{n,z}(i,u)| \ \leqslant \    \mathbb P_i \po \exists s\in [0,M\theta_n^{h}],\ \sigma_n(s)\neq \tilde\sigma_n^z(s) \pf \ \underset{n\rightarrow +\infty}\longrightarrow \ 0\,.\]
For this reason, we can focus on $\sigma_n$ rather than $\tilde\sigma^z_n$.

\medskip
  
\noindent\textbf{Step 3.} Let us prove that, for all $j\in\cco 1,h\ccf$,
\begin{eqnarray}\label{Eq:DoeblinLocal}
\liminf_{n\rightarrow +\infty} \min_{i\in\mathcal E_{x_{j+1}}^{j+1}} \sum_{u\in \mathcal E_{x_{j}}^{j}}  P_{\theta_n^j-\theta_n^{j-1}}^{n}(i,u) &>& 0\,. 
\end{eqnarray}
Indeed, for all $s>0$, $n\in\N$,
\[\sum_{u\in \mathcal E_{x_{j}}^{j}}  P_{s}^{n}(i,u)  \ = \ \mathbb P_i\po \Psi^j(\sigma_n(s)) = x_j\pf \,.\]
 Since $\theta_n^j - \theta_n^{j-1} \simeq \theta_n^j$,   Theorem~\ref{Thm:Landim}~\ref{lan2_CVMarkov} implies that for all $y\in\cco 1,\mathfrak n_j\ccf$ and $i\in\mathcal E_y^j$,
\[\sum_{u\in \mathcal E_{x_{j}}^{j}}  P_{\theta_n^j-\theta_n^{j-1}}^{n}(i,u) \underset{n\rightarrow +\infty}\longrightarrow \mathbb P_y\po X^j(1) = x_j\pf\,.\]
Recall from  Theorem~\ref{Thm:Landim}~\ref{lan5coarser} that  $\mathcal E_{x_{j+1}}^{j+1} = \bigcup_{y\in \mathcal C_{x_{j+1}}^{j}} \mathcal E_y^{j}$, where $\mathcal C_{x_{j+1}}^{j}$ is the  $x_{j+1}$-th recurrence class of the limit chain $X^{j}$. In particular, $x_{j} \in \mathcal C_{x_{j+1}}^{j}$ and, by definition of a recurrence class, for all $x,y\in   \mathcal C_{x_{j+1}}^{j}$, $\mathbb P_{x}(X^j(1)=y)>0$. This concludes the proof of \eqref{Eq:DoeblinLocal}.

\medskip
  
\noindent\textbf{Step 4.} We now deduce from \eqref{Eq:DoeblinLocal} that, for all $j\in\cco 1,h\ccf$,
\begin{eqnarray}\label{Eq:DoeblinLocal2}
\liminf_{n\rightarrow +\infty} \min_{i\in\mathcal E_{x_{j+1}}^{j+1}}    P_{\theta_n^j}^{n}(i,i_z) &>& 0\,. 
\end{eqnarray}
Indeed, reasoning by induction, the case $j=1$ is just \eqref{Eq:DoeblinLocal} with $j=1$ since $\mathcal E_{x_1}^1 = \{i_z\}$ and $\theta_n^0 = 0$. Then, using the Markov property, for $i\in\mathcal E_{x_{j+1}}^{j+1}$,
\[P_{\theta_n^j}^{n}(i,i_z) \ \geqslant \ \sum_{u\in \mathcal E_{x_{j}}^{j}}  P_{\theta_n^j-\theta_n^{j-1}}^{n}(i,u) P_{\theta_n^{j-1}}^n(u,i_z)\,,\]
and the conclusion follows from \eqref{Eq:DoeblinLocal} and, by induction, \eqref{Eq:DoeblinLocal2} for $j-1$.

\medskip

\noindent\textbf{Step 5.} Let us prove by induction on $j$ that, for all $j\in\cco 1,h\ccf$, 
\begin{equation}\label{eq:cvtildeinf}
\forall M>m>1\,,\qquad \liminf_{n\rightarrow +\infty}  \inf_{s\in[m,M]}     P_{s\theta_n^j}^{n}(i_z,i_z)  \ >\ 0\,. 
\end{equation}
For $j=1$, this follows from the fact that $1/\theta^1_n$ is of the same order as the total jump rate $\mu_n$ of the chain, so that for all $M>0$ there is a nonvanishing probability that no jump occurs in $[0,M\theta_n^1]$, and then
\[ \inf_{s\in[0,M]}     P_{s\theta_n^j}^{n}(i_z,i_z)  \ \geqslant \ \mathbb P_{i_z} \po \sigma_n(t) = i_z\ \forall t\in[0,M\theta_n^1]\pf\,.\]
Now, suppose that \eqref{eq:cvtildeinf} is true for $j-1$ for some $j\in\cco 2,h\ccf$. Fix $M>m>1$. For $M'>0$ to be chosen later on, consider the events
\begin{align*}
E_n &= \left\{\sigma_n(t) \in \mathcal E_{x_j}^j \cup \Delta^j \ \forall t\in[0,M\theta^j_n]\right\}, \qquad 
F_n = \left\{\tau' \leqslant M' \theta^{j-1}\right\},  
\end{align*}
where $\tau' = \inf\{t\geqslant 0\mid \sigma_n(t) \notin \Delta^j\}$. From Theorem~\ref{Thm:Landim}~\ref{lan6escape_expo},
\[\varepsilon \ := \ \liminf_{n\rightarrow +\infty} \mathbb P_{i_z}(E_n) \ > \ 0\,.\]
Besides, from Theorem~\ref{Thm:Landim}~\ref{lan2_CVMarkov} and \ref{lan5coarser}, for all $i\in \Delta^j \setminus \Delta^{j-1}$,
\begin{align*}
\liminf_{n\rightarrow +\infty} \mathbb P_i \po \tau' \leqslant M' \theta_n^{j-1} \pf & \geqslant\ \liminf_{n\rightarrow +\infty} \mathbb P_i \po \sigma_n(M'\theta_n^{j-1}) \notin \Delta^j\pf \\
&\geqslant \ {\min_{y\in\mathcal T^j}\mathbb P_y \po   X^j (M') \notin \mathcal T^j\pf\ } \underset{M'\rightarrow +\infty}\longrightarrow 1\,,
\end{align*}
and from Theorem~\ref{Thm:Landim}~\ref{lan4Deltanegligeable}, for all $i\in \Delta^{j-1}$,
\[\mathbb P_i\po \sigma_n(t) \in \Delta^{j-1} \ \forall t\in [0,\theta_n^{j-1}]\pf \ \underset{n\rightarrow +\infty}\longrightarrow 0\,, \]
so that, combining these two facts thanks to the Markov property and denoting $q_n =\min_{i\in\cco 1,N\ccf}\mathbb P_i \po \tau' \leqslant M' \theta_n^{j-1} \pf$, we get
\[ \ \liminf_{n\rightarrow +\infty} q_n \underset{M'\rightarrow +\infty}\longrightarrow 1\,.\]
From now on, we choose $M'$ large enough so that $q_n \geqslant 1-\varepsilon/2$ for all $n \geqslant n_0$ for some sufficiently large $n_0$. We also suppose that $n_0$ is large enough so that $(m-1)\theta_n^j - (2+M')\theta_n^{j-1}>0$ for all $n\geqslant n_0$, and implicitly up to the end of Step 4 we always assume $n\geqslant n_0$.

For $s\in [m,M]$ and all $n\geqslant n_0$  consider the time interval $I_n(s) = [(s-1)\theta_n^j - (2+M')\theta_n^{j-1},(s-1)\theta_n^j-2\theta_n^{j-1}]$ and the event
\[F_n(s) \  = \ \{\exists u\in I_n(s)\mid \sigma_n(u) \notin \Delta^j\}\,.\]
By the Markov property, $\mathbb P_i(F_n(s)) \geqslant q_n$ for all $i\in\cco 1,N\ccf$ and $s\in[m,M]$. Moreover, the event $F_n(s) \cap E_n$, which has a probability at least $\varepsilon/3$ for $n$ large enough, implies that there exists $u\in I_n(s)$ such that $\sigma_n(u) \in \mathcal E_{x_j}^j$. Conditioning with respect to $u$ and $\sigma_n(u)$ for such a time $u$, we get by the Markov property that, for all $s\in[m,M]$,
\[P_{s\theta_n^j}^{n}(i_z,i_z) \ \geqslant \ \mathbb P_{i_z}\po F_n(s) \cap E_n\pf \min_{i\in \mathcal E_{x_j}^j} P_{\theta_n^j}^n(i,i_z) \inf_{s'\in [2,2+M'] }   P_{s'\theta_n^{j-1}}^n (i_z,i_z) \,.\]
Thanks to the induction hypothesis, \eqref{Eq:DoeblinLocal2}, and the bound on $q_n$, we see that the three factors of the right-hand side are bounded from below by positive quantities independent of $s\in [m,M]$, which concludes the proof of \eqref{eq:cvtildeinf} for all $j\in\cco 1,h\ccf$.

\medskip

\noindent \textbf{Step 6.} We now prove the Doeblin condition \eqref{Eq:DoeblinFast}. From Step 2, \eqref{Eq:DoeblinLocal2} and \eqref{eq:cvtildeinf} are still true if $P^n$ is replaced by $P^{n, z}$. According to Step 1, from any initial condition $i\in\cco 1,N\ccf$, $\tilde \sigma_n^z$ has a nonvanishing probability to hit $\mathcal E_z^{h+1}$ before time $\theta_n^h$. Using the Markov property and \eqref{Eq:DoeblinLocal2} (with $P^n$ replaced by $P^{n,z}$), conditioning with respect to this hitting time $u$, the chain has a nonvanishing probability (independent of $u$) to be at $i_z$ at time $u+\theta_n^h$. Then, from \eqref{eq:cvtildeinf} (with $P^n$ replaced by $  P^{n,z}$), it has a nonvanishing probability (still independent of $u$) to be at $i_z$ at time $4\theta_n^z$, which concludes.
\end{proof}
 
Before stating the main result of the section, let us prove an additional technical property. 

\begin{prop}\label{Prop:mix}
For all $T>0$, all $z\in \cco 1,\mathfrak{n}_{h+1}\ccf$, and all $j \in \mathcal E_z^{h+1}$,
\[\max_{i\in \cco 1,N\ccf}\mathbb E_i \po \left| \int_0^T \po \1_{\sigma_n(s) = j} - \pi_n^z(j)\1_{\sigma_n(s)\in\mathcal E^{h+1}_z} \pf \dd s \right|  \pf \ \underset{n\rightarrow +\infty} \longrightarrow \ 0\,.\]
\end{prop}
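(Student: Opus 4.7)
The plan starts by observing that the integrand
\[
h(s) := \1_{\sigma_n(s) = j} - \pi_n^z(j)\1_{\sigma_n(s) \in \mathcal E_z^{h+1}}
\]
vanishes whenever $\sigma_n(s) \notin \Ebz$, since $j \in \mathcal E_z^{h+1} \subset \Ebz$ forces both indicators to be zero in that case. Using the partition $\Ebz = \mathcal E_z^{h+1} \sqcup \Delta^{h+1}$, I rewrite
\[
h(s) = \bigl[\1_{\sigma_n(s) = j} - \pi_n^z(j)\1_{\sigma_n(s) \in \Ebz}\bigr] + \pi_n^z(j)\,\1_{\sigma_n(s) \in \Delta^{h+1}}.
\]
The $L^1$-contribution of the second summand is bounded by $\pi_n^z(j)\,\mathbb E_i \int_0^T \1_{\sigma_n(s) \in \Delta^{h+1}}\,\dd s$, which tends to zero uniformly in $i$ by \eqref{Eq:Negligeable} applied with $\alpha_n \equiv 1$.

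For the first summand, I decompose $[0, T]$ into the maximal excursions $I_k = [\tau_k^{\mathrm{in}}, \tau_k^{\mathrm{out}}]$ of $\sigma_n$ in $\Ebz$, with lengths $L_k = \tau_k^{\mathrm{out}} - \tau_k^{\mathrm{in}}$; let $K_n$ denote their (random) number in $[0,T]$. Outside these excursions, the first summand vanishes. Theorem~\ref{Thm:Landim}~\ref{lan6escape_expo}, applied in each of the three cases discussed just before Proposition~\ref{prop:semigroup-pi}, shows that $K_n$ has exponential tails uniformly in $n$ and in the initial condition, so that $\sum_{k \geq 1} \sqrt{\mathbb P_i(K_n \geq k)}$ is bounded uniformly. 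On each excursion $I_k$, the synchronous coupling used in the proof of Proposition~\ref{Prop:CV-mu-borne} provides a copy $\tilde \sigma_n^{z,(k)}$ of $\tilde \sigma_n^z$ started at $\sigma_n(\tau_k^{\mathrm{in}})$ which agrees with $\sigma_n$ throughout $I_k$ and which one extends as a full trajectory on a time horizon of length $T$. Setting $\psi(y) = \1_{y=j} - \pi_n^z(j)$, which satisfies $\pi_n^z(\psi) = 0$, the contribution of $I_k$ to the first summand equals $M_k(L_k) := \int_0^{L_k} \psi(\tilde \sigma_n^{z,(k)}(u))\,\dd u$.

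The mixing bound of Proposition~\ref{prop:semigroup-pi} gives $|(P_u^{n,z}\psi)(i')| \leq C e^{-\rho u/\theta_n^h}$, which by the Markov property yields the covariance estimate $|\mathbb E_{i'}[\psi(\tilde\sigma_n^z(u))\psi(\tilde\sigma_n^z(v))]| \leq C e^{-\rho|v-u|/\theta_n^h}$ and hence $\mathbb E_{i'} M_k(t)^2 \leq 2Ct\theta_n^h/\rho$, uniformly in $i' \in \Ebz$ and $t \in [0,T]$. The deterministic bound $M_k(L_k)^2 \leq 2M_k(T)^2 + 2(M_k(T) - M_k(L_k))^2$, combined with the strong Markov property applied at the stopping time $L_k$ to control the second term, yields $\mathbb E M_k(L_k)^2 = O(T\theta_n^h)$. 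Cauchy--Schwarz then produces
\[
\mathbb E_i\sum_{k=1}^{K_n} |M_k(L_k)| \;\leq\; \sum_{k \geq 1} \sqrt{\mathbb P_i(K_n \geq k)}\,\sqrt{\mathbb E|M_k(L_k)|^2} \;=\; O\bigl(\sqrt{T\theta_n^h}\bigr) \;\longrightarrow\; 0,
\]
which, combined with the first step, concludes. The main technical difficulty I anticipate is handling the random lengths $L_k$ through the coupling: extending each $\tilde\sigma_n^{z,(k)}$ beyond $L_k$ and invoking the strong Markov property at $L_k$ is what allows the mixing estimate of Proposition~\ref{prop:semigroup-pi} to be exploited without knowing $L_k$ in advance, and the uniform exponential tails of $K_n$ are crucial in order to sum the contributions over a random number of excursions.
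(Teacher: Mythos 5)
Your decomposition of the integrand, the covariance/mixing estimate derived from Proposition~\ref{prop:semigroup-pi}, and the use of the strong Markov property at $L_k$ are all sound, but there is a genuine gap at the step where you claim that the number $K_n$ of maximal excursions of $\sigma_n$ in $\Ebz$ has exponential tails uniformly in $n$ and in the initial condition. Theorem~\ref{Thm:Landim}~\ref{lan6escape_expo} controls the escape time from $\mathcal E_x^{h+1}\cup\Delta^{h+1}$ only for initial conditions in $\mathcal E_x^{h+1}$; it says nothing about how often the trajectory crosses between $\bigcup_{y\neq z}\mathcal E_y^{h+1}$ and $\Delta^{h+1}$. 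Since $\Delta^{h+1}\subset\Ebz$, every brief visit to a transient state made while the chain is effectively living in some class $\mathcal E_y^{h+1}$ with $y\neq z$ opens a new maximal excursion in $\Ebz$, and there can be arbitrarily many of these. Concretely, in the example of Figure~\ref{fig:chaine4} with $a_n=n^{1/3}$ one finds $h=1$, $\mathcal E_1^{2}=\{1\}$, $\mathcal E_2^{2}=\{4\}$, $\Delta^{2}=\{2,3\}$; taking $z$ with $\mathcal E_z^{h+1}=\{1\}$ and the initial condition $i=4$ (which is included in the $\max$ over $i$), the chain shuttles between $4$ and $3\in\Delta^{2}$ about $T\sqrt n$ times during $[0,T]$, each round trip producing a separate maximal excursion in $\Ebz=\{1,2,3\}$, so $K_n\to+\infty$ in probability and $\sum_{k\geqslant 1}\sqrt{\mathbb P_i(K_n\geqslant k)}$ is not bounded. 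Your final Cauchy--Schwarz summation therefore does not close as written.

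The gap is repairable within your scheme: separate the excursions entirely contained in $\Delta^{h+1}$, on which the first summand equals $-\pi_n^z(j)$ and whose total contribution is bounded by $\int_0^T\1_{\sigma_n(s)\in\Delta^{h+1}}\dd s$ and hence vanishes by \eqref{Eq:Negligeable}, from the excursions that hit $\mathcal E_z^{h+1}$. For the latter, from the first hitting time of $\mathcal E_z^{h+1}$ onward the excursion lasts at least the escape time of Theorem~\ref{Thm:Landim}~\ref{lan6escape_expo}, which in each of the three cases preceding Proposition~\ref{prop:semigroup-pi} is asymptotically exponential with finite rate, so that $\limsup_{n}\max_{i\in\mathcal E_z^{h+1}}\mathbb P_i\po\tau_z^{h+1}\leqslant T\pf<1$ and the strong Markov property yields uniformly geometric tails, for $n$ large, for the number of such excursions. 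With this correction your argument goes through and constitutes a genuinely different route from the paper's proof, which instead partitions $[0,T]$ into $R$ deterministic subintervals, couples $\sigma_n$ on each of them with a chain $\tilde\sigma_n^{Z_b}$ started at the first exit from $\Delta^{h+1}$ (as in Proposition~\ref{Prop:CV-mu-borne}), bounds by $K/R$ the probability that an inter-class transition occurs inside a given subinterval using the same escape-time asymptotics, applies the mixing estimate in $L^1$ on each subinterval for fixed $R$, and lets $R\to\infty$ only at the end.
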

\begin{proof}
Fix $T>0$,  $z\in \cco 1,\mathfrak{n}_{h+1}\ccf$, and   $j \in \mathcal E_z^{h+1}$. Let $R \in \mathbb N$, and notice that
\[
\left| \int_{0}^{T} \po \1_{\sigma_n(s)=j}  -   \pi_n^z(j)\1_{\sigma_n(s)\in\mathcal E^{h+1}_z}\pf\dd s\right| \leqslant \sum_{b=0}^{R-1 }  \left| \int_{bT/R}^{(b+1)T/R} \po \1_{\sigma_n(s)=j}  -   \pi_n^z(j)\1_{\sigma_n(s)\in\mathcal E^{h+1}_z}\pf\dd s\right|.
\]
For all $b\in\cco 0,R-1\ccf$, consider the stopping time $\tau_b = \inf\{t\geqslant bT/R \mid  \sigma_n(t) \notin \Delta^{h+1}\}$ and $Z_b = \Psi^{h+1}(\sigma_n(\tau_b))$. Let $\tau_b^+ = \inf\{t\geqslant \tau_b \mid  \sigma_n(t) \in\bigcup_{y\neq Z_b}\mathcal E^{h+1}_y\}$. According to  Theorem~\ref{Thm:Landim}~\ref{lan6escape_expo}, for all $b\in\cco 0,R-1\ccf$, $\tau_b^+ - \tau_b$ converges as $n\rightarrow +\infty$ towards an exponential distribution with some finite rate (possibly $0$). Thus there exist $K>0$ such that
\[\max_{i\in \cco 1,N\ccf}\mathbb P_i\po \tau_b^+ - \tau_b \leqslant T/R\pf \ \leqslant \ \frac{K}{R}\]
for all $n$ large enough and all  $R \in \mathbb N$. 

For each $b\in\cco 0,R-1\ccf$, consider a Markov chain $\tilde \sigma_n^{Z_b}$ and its associated semigroup $P^{n,Z_b}$, initialized at time $\tau_b$ by $\tilde \sigma_n^{Z_b}(\tau_b) = \sigma_n(\tau_b)$ and such that $\tilde \sigma_n^{Z_b}(s) = \sigma_n(s)$ for all $s\in [\tau_b,\tau_b^+)$. We have
\begin{multline*}
 \left| \int_{bT/R}^{(b+1)T/R} \po \1_{\sigma_n(s)=j}  -   \pi_n^z(j)\1_{\sigma_n(s)\in\mathcal E^{h+1}_z}\pf\dd s\right| \  \leqslant \ \frac TR \1_{\tau_b\geqslant (b+1)T/R} \\
 + \1_{\tau_b<(b+1)T/R} \po \tau_b - \frac{bT}{R}  + \left| \int_{\tau_b}^{(b+1)T/R} \po \1_{\sigma_n(s)=j}  -   \pi_n^z(j)\1_{\sigma_n(s)\in\mathcal E^{h+1}_z}\pf\dd s\right|\pf \displaybreak[0] \\ 
 \leqslant \ \frac TR \po  \1_{\tau_b\geqslant (b+1)T/R} + \1_{\tau_b^+ - \tau_b \leqslant T/R} \pf  \\
 + \1_{\tau_b<(b+1)T/R} \po \tau_b - \frac{bT}{R}  +\left| \int_{\tau_b}^{(b+1)T/R} \po \1_{\tilde \sigma_n^{Z_b}(s)=j}  -   \pi_n^{z}(j)\1_{\tilde \sigma_n^{Z_b}(s)\in\mathcal E^{h+1}_z}\pf\dd s\right|\pf \\
 \leqslant \ \frac TR \po  \1_{\tau_b\geqslant (b+1)T/R} + \1_{\tau_b^+ - \tau_b \leqslant T/R} \pf  \\
+ 2 \1_{\tau_b<(b+1)T/R}  \po \tau_b - \frac{bT}{R}\pf   + \left| \int_{\tau_b}^{\tau_b + T/R} \po \1_{\tilde \sigma_n^{Z_b}(s)=j}  -   \pi_n^{z}(j)\1_{\tilde \sigma_n^{Z_b}(s)\in\mathcal E^{h+1}_z}\pf\dd s\right| \,.
 \end{multline*}
 We bound separately the expectations of these terms. First, for all $i\in\cco 1,N\ccf$,
 \begin{eqnarray*}
\mathbb E_i \po \1_{\tau_b<(b+1)T/R}  \po \tau_b - \frac{bT}{R}\pf \pf & \leqslant & \mathbb E_i \po \int_{bT/R}^{(b+1)T/R} \1_{\sigma_n(s) \in \Delta^{h+1}}\dd s\pf\\
& \leqslant & \sup_{u\in\cco 1,N\ccf} \mathbb E_u \po \int_{0}^{T/R} \1_{\sigma_n(s) \in \Delta^{h+1}}\dd s\pf\,, 
 \end{eqnarray*}
 which vanishes as $n\rightarrow +\infty$ (this is \eqref{Eq:Negligeable}). Similarly,
\begin{eqnarray*}
 \mathbb P_i \po \tau_b \geqslant (b+1)T/R\pf &=&  \mathbb P_i\po \int_{bT/R}^{(b+1)T/R}    \1_{\sigma_n(s) \in \Delta^{h+1}}\dd s  = \frac TR \pf  \\
 & \leqslant & \frac{R}{T}\sup_{u\in\cco 1,N\ccf} \mathbb E_u \po \int_{0}^{T/R} \1_{\sigma_n(s) \in \Delta^{h+1}}\dd s\pf\,.
\end{eqnarray*} 
Also, remark that if $Z_b \neq z$ then for all $s\geqslant \tau_b$
\[ \1_{\tilde \sigma_n^{Z_b}(s)=j}  = 0 =   \pi_n^{z}(j)\1_{\tilde \sigma_n^{Z_b}(s)\in\mathcal E^{h+1}_z}\,.\]
Up to now we have obtained that, for all $i\in\cco 1,N\ccf$, for $n$ large enough,
\begin{multline*}
\mathbb E_i \po \left| \int_0^T \po \1_{\sigma_n(s) = j} - \pi_n^z(j)\1_{\sigma_n(s)\in\mathcal E^{h+1}_z} \pf \dd s \right|  \pf \ \\ \leqslant \ \frac{KT}{R} +  3R \sup_{u\in\cco 1,N\ccf} \mathbb E_u \po \int_{0}^{T/R} \1_{\sigma_n(s) \in \Delta^{h+1}}\dd s\pf\\
+ \ R \sup_{u\in \Ebz} \mathbb E_{u} \po \left| \int_{0}^{T/R} \po \1_{\tilde \sigma_n^{z}(s)=j}  -   \pi_n^z(j)\1_{\tilde \sigma_n^{z}(s)\in\mathcal E^{h+1}_z}\pf\dd s\right| \pf\,.
\end{multline*}
Since the left-hand side does not depend on $R$, it only remains to prove that, for any fixed $R$, the last term converges to zero  as $n\rightarrow +\infty$. Using Proposition~\ref{prop:semigroup-pi}, the proof 
is exactly the same as that given for Proposition~\ref{Prop:CVversAveraged}.
\end{proof}

We are now ready to prove the main result of this section, Theorem~\ref{Thm:CV-mu-pas-borne}.

\begin{proof}[Proof of Theorem~\ref{Thm:CV-mu-pas-borne}]
 Let us first show that it is sufficient to consider the case where $P_n$ is strongly connected for all $n\in\N$. Indeed,  instead of a chain $\sigma_n$ associated with some $(\nu_n,\mu_n,P_n)_{n\in\N}$, we can rather consider a chain $\tilde \sigma_n$ associated with $(\nu_n,\mu_n+e^{-n},(\mu_n P_n+e^{-n}Q)/(\mu_n+e^{-n}))_{n\in\N}$ where $Q$ is the matrix whose coefficients are all $1/N$. In other words, $\tilde \sigma_n$ behaves like $\sigma_n$ except that, at rate $e^{-n}$, it jumps to a position distributed uniformly over $\cco 1,N\ccf$. In particular, as in the proof of Proposition~\ref{Prop:CV-mu-borne}, considering the synchronous coupling of $\sigma_n$ and $\tilde \sigma_n$,  we get that, for all $T>0$,  
 \[\mathbb P\po \sigma_n(t) = \tilde \sigma_n(t)\ \forall t\in[0,T]\pf \geqslant  e^{-T e^{-n}}\,.\]
Hence, if we prove that, up to a subsequence, the Markov process associated with $\tilde\sigma_n$ converges in the Skorokod topology to some convexified process, then the same convergence holds for the Markov process associated with $\sigma_n$. For this reason, from now on we suppose that $P_n$ is strongly connected.

The case where  $(\mu_n)_{n\in\N}$ is   bounded has already been treated in Proposition~\ref{Prop:CV-mu-borne}, so we suppose that $\lambda_n(i,j)\rightarrow +\infty$ for some distinct $i,j\in\cco 1,N\ccf$ and we keep all the definitions and notations of the rest of the section. Up to extracting a subsequence, we assume that, for all $z\in\cco 1,\mathfrak{n}_{h+1}\ccf$, $\pi_n^z$ converges as $n\rightarrow +\infty$ to some law $\pi^z$. Let us prove that for all $T,\delta>0$, $z\in \cco 1,\mathfrak{n}_{h+1}\ccf$, and $j\in \mathcal E_z^{h+1}$,
\begin{equation}\label{eq:tobph}
\sup_{i\in \cco 1,N\ccf}\mathbb P_i \po \left| \int_0^T \po \1_{\sigma_n(s) = j} - \pi^z(j)\1_{\tilde X^{h+1}(s) = z}\pf   \dd s \right| > \delta  \pf \ \underset{n\rightarrow +\infty} \longrightarrow \ 0\,,
\end{equation}
from which the conclusion follows exactly as in the proof  of \cite[Lemma~2.14]{benaim2019} or of Proposition~\ref{Prop:CVversAveraged}. First, by \eqref{Eq:CVintegraleXtilde},
\[\sup_{i\in \cco 1,N\ccf}\mathbb P_i \po \pi^z(j) \left| \int_0^T \po \1_{\Psi^{h+1}(\sigma_n(s)) = z} - \1_{\tilde X^{h+1}(s) = z}\pf   \dd s \right| > \delta/2  \pf \ \underset{n\rightarrow +\infty} \longrightarrow \ 0\,.\]
Second,
\[\sup_{i\in \cco 1,N\ccf}\mathbb E_i \po |\pi^z(j)-\pi_n^z(j)|   \int_0^T   \1_{\Psi^{h+1}(\sigma_n(s)) = z}   \dd s    \pf \ \leqslant \ T \abs{\pi_n^z - \pi^z}_\infty   \ \underset{n\rightarrow +\infty} \longrightarrow \ 0\,.\]
Third,  Proposition~\ref{Prop:mix} reads 
\[\sup_{i\in \cco 1,N\ccf}\mathbb E_i \po \left| \int_0^T \po \1_{\sigma_n(s) = j} - \pi_n^z(j)\1_{\Psi^{h+1}(\sigma_n(s)) = z} \pf \dd s \right|  \pf \ \underset{n\rightarrow +\infty} \longrightarrow \ 0\,.\]
The combination of these three facts yields \eqref{eq:tobph}.
\end{proof}

We now extend Theorem~\ref{Thm:CV-mu-pas-borne} to the case where the processes $(x_n, \sigma_n)$ are themselves already convexified. The parameters of a convexified process are $(\nu,\mu,P,B)$ where $B=(B_1,\dots,B_k)$ are the modes and $(\nu,\mu,P)$ are the parameters of the Markov chain on $\cco 1,k\ccf$.

\begin{coro}
\label{Coro:CV-mu-pas-borne}
Consider a sequence $(x_n,\sigma_n)$ of convexified Markov processes for $A$ with parameters $(\nu_n,\mu_n,P_n, B_n)_{n\in\N}$. Up to extracting a subsequence, there exists a convexified Markov process $(x,\sigma)$ for $A$ such that, for all $T,\delta>0$,
\begin{eqnarray*}
\mathbb P \po \sup_{t\in [0,T]} |x(t)-x_n(t)| > \delta \pf & \underset{n\rightarrow+\infty}\longrightarrow & 0\,.
\end{eqnarray*}
\end{coro}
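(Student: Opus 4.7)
The plan is to reduce Corollary~\ref{Coro:CV-mu-pas-borne} to Theorem~\ref{Thm:CV-mu-pas-borne} by approximating each convexified process $(x_n,\sigma_n)$ by ordinary Markov processes for $A$ using Proposition~\ref{Prop:CVversAveraged}, and then performing a diagonal extraction followed by an application of Theorem~\ref{Thm:CV-mu-pas-borne}.

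More precisely, for each $n\in\N$, Proposition~\ref{Prop:CVversAveraged} applied to $(x_n,\sigma_n)$ yields a sequence $(y_{n,m})_{m\in\N}$ of Markov processes for $A$ satisfying, for every $T,\delta>0$,
\[\mathbb P\po \sup_{t\in[0,T]} |x_n(t) - y_{n,m}(t)| > \delta\pf \underset{m\to\infty}{\longrightarrow} 0.\]
A standard diagonal argument then produces indices $m_n\in\N$ such that, setting $\tilde x_n := y_{n,m_n}$, one has $\mathbb P(\sup_{t\in[0,n]} |x_n(t) - \tilde x_n(t)|>1/n) \leqslant 1/n$; in particular, $(\tilde x_n)_{n\in\N}$ is a sequence of Markov processes for $A$ along which $\sup_{t\in[0,T]}|x_n(t)-\tilde x_n(t)|$ tends to $0$ in probability as $n\to\infty$, for every $T>0$.

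I would then apply Theorem~\ref{Thm:CV-mu-pas-borne} to $(\tilde x_n)_{n\in\N}$: up to a further subsequence (still indexed by $n$), this provides a convexified Markov process $(x,\sigma)$ for $A$ with the property that, for every $T,\delta>0$, $\mathbb P(\sup_{t\in[0,T]} |x(t) - \tilde x_n(t)| > \delta) \to 0$. The conclusion would follow from the triangle inequality
\[\mathbb P\po \sup_{[0,T]} |x - x_n| > \delta\pf \ \leqslant\ \mathbb P\po \sup_{[0,T]}|x - \tilde x_n| > \delta/2\pf + \mathbb P\po \sup_{[0,T]}|\tilde x_n - x_n| > \delta/2\pf\]
by letting $n\to\infty$ on the right-hand side.

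The main technical obstacle is that the two couplings used above — the coupling between $x_n$ and $\tilde x_n$ coming from Proposition~\ref{Prop:CVversAveraged}, and the coupling between $\tilde x_n$ and $x$ coming from Theorem~\ref{Thm:CV-mu-pas-borne} — are a priori defined on different probability spaces, so that the triangle inequality above is not, as it stands, directly meaningful. This is handled by reformulating each convergence as a convergence in law on the Polish space $C([0,\infty),\R^d)$ equipped with the topology of uniform convergence on compact sets, and then invoking Skorokhod's representation theorem to realize $x$, $\tilde x_n$, and $x_n$ jointly on a common probability space, exactly as was done in the proofs of Propositions~\ref{Prop:CV-mu-borne} and \ref{Prop:CVversAveraged}.
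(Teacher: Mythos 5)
Your argument is correct, but it follows a genuinely different route from the paper's. The paper does not go through Proposition~\ref{Prop:CVversAveraged} at all: it first extracts a subsequence along which the index partition underlying $B_n$ is constant and the convex coefficients converge, so that $B_n\to B=(B_1,\dots,B_{N'})$; it then introduces the auxiliary process $y_n$ driven by the \emph{same} chain $\sigma_n$ but with the limiting modes $B$, applies Theorem~\ref{Thm:CV-mu-pas-borne} to $(y_n,\sigma_n)$ viewed as ordinary Markov processes for the fixed tuple $B$, and controls $\sup_{[0,T]}|x_n-y_n|$ by a pathwise Gr\"onwall estimate that tends to $0$ deterministically (since $x_n$ and $y_n$ share $\sigma_n$, no coupling issue arises there). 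Your route instead approximates each convexified process by genuine Markov processes for $A$ via Proposition~\ref{Prop:CVversAveraged}, diagonalizes, and applies the theorem directly; the price is exactly the technical point you identify, namely that the two approximations live on different probability spaces, so the "triangle inequality" must be run at the level of laws on $C([0,\infty),\R^d)$ (locally uniform topology) and closed with Skorokhod's representation theorem, together with the standard remark that one can transfer a discrete component $\sigma$ to the new space so that the limit is still a bona fide convexified Markov process. Since the paper itself already invokes convergence in law plus Skorokhod in the proof of Proposition~\ref{Prop:CV-mu-borne}, this is consistent with its conventions and the gap is only one of bookkeeping, not of substance. Your approach is more modular (it derives the closure of the class of convexified processes from its density plus the compactness theorem), whereas the paper's is more elementary and avoids all probability-space gluing.
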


\begin{proof}
For each $n\in\N$, the set of matrices in $B_n$ is related to a partition of the matrices in $A$, and thus, up to extracting a subsequence, we  assume that this partition is the same for all $n\in\N$. In other words, $B_n=(B_{n,1},\dots,B_{n,N'})$ where $N'\in\cco 1,N\ccf$ is independent of $n$, and for all $j\in \cco 1,N'\ccf$,  $B_{n,j}=\sum_{i\in I_j} \pi_{n,i} A_i$, where the  sets $I_1,\dots,I_{N'}$ are disjoint, nonempty, and independent of $n$, and 
 $(\pi_{n,i})_{i\in I_j}$ is a probability vector. 
In particular, for all $n\in\N$, $\nu_n$ and $P_n$ are respectively of dimension $N'$ and $N'\times N'$. By compactness, up to extracting a subsequence, we assume that the coefficients $\pi_{n,i}$ converge as $n\to \infty$, in other words $B_n\to B=(B_1,\dots,B_{N'})$.

For $n\in\N$, let  $y_n$ be the solution of $\dot y_n(t) = B_{\sigma_n(t)}y_n(t)$ for $t\geqslant 0$. In particular, $(y_n,\sigma_n)$ is a Markov process for $B$ with parameters $(\nu_n,\mu_n,P_n,B)$. Applying Theorem~\ref{Thm:CV-mu-pas-borne}, we get that there exists a  convexified Markov process  $(x,\sigma)$ for $B$ (which is in particular a convexified process for $A$) such that, for all $T,\delta>0$,
\begin{eqnarray*}
\mathbb P \po \sup_{t\in [0,T]} |x(t)-y_n(t)| > \delta \pf & \underset{n\rightarrow+\infty}\longrightarrow & 0\,.
\end{eqnarray*}
On the other hand, since $y_n$ and $x_n$ share the same Markov chain $\sigma_n$, we immediately get that, almost surely, for all $t\geqslant 0$,
\begin{align*}
|x_n(t)-y_n(t)| & \leqslant \int_{0}^t \po \|B_{\sigma_n(s)}\| |x_n(s)-y_n(s)|+ \|B_{n,\sigma_n(s)}-B_{\sigma_n(s)}\| |x_n(s)|\pf \dd s  \\
&\leqslant K\int_{0}^t|x_n(s)-y_n(s)|\dd s + t e^{Kt}|x_0| \max_{j\in\cco 1,N'\ccf} \|B_{j}-B_{n,j}\| 
\end{align*}
where $K=\max_{i\in\cco 1,N\ccf}\|A_i\|$ and thus, almost surely,
\[\sup_{t\in [0,T]} |x_n(t)-y_n(t)| \underset{n\to\infty}\longrightarrow 0\,,\]
yielding the conclusion.
\end{proof}

Up to now, in this section, the initial condition $x_0 \in \mathbb R^d$ has been fixed and is common to all processes $x$ and $x_n$. Since we are interested in the study of Lyapunov exponents, it is useful to reformulate Corollary~\ref{Coro:CV-mu-pas-borne} in terms of the flow $\Phi_\sigma$. This is done by applying it to the Markov process $(\Phi_{\sigma_n}, \sigma_n)$, which is a PDMP corresponding to the flow in the space $\mathcal M_d(\R)$ of $\dot \Phi_{\sigma_n}(t) = A_{\sigma_n(t)} \Phi_{\sigma_n}(t)$
with initial condition $\Phi_{\sigma_n}(0)=\id$.
As a straightforward consequence, using the fact that the flow is bounded on compact time intervals uniformly with respect to $\sigma$, we obtain the following. 

\begin{coro}
\label{coro:continuity}
Let $(\nu_n, \mu_n, P_n, B_n)_{n \in \mathbb N}$ be a sequence of convexified Markov processes for $A$. Then there exists a convexified Markov process $(\nu_\ast, \mu_\ast, P_\ast, B)$ for $A$ such that, up to extracting a subsequence, we have, for every $T > 0$,
\begin{equation}
\label{eq:cv-esperance-log}
\mathbb E\left[\log \norm{\Phi_{\sigma_n}(T)}\right] \underset{n\rightarrow+\infty}\longrightarrow \mathbb E\left[\log \norm{\Phi_{\sigma}(T)}\right],
\end{equation}
where $\sigma_n$ and $\sigma$ are obtained from the convexified Markov processes $(\nu_n, \mu_n, P_n, B_n)$ and $(\nu_\ast, \mu_\ast, P_\ast, B)$, respectively.
\end{coro}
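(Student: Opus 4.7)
The plan is to apply Corollary~\ref{Coro:CV-mu-pas-borne} to the \emph{matrix-valued} PDMP $(\Phi_{\sigma_n}, \sigma_n)$, viewed as a convexified Markov process in $\mathcal M_d(\R) \cong \R^{d^2}$ with common initial condition $\Phi_{\sigma_n}(0) = \id$. Since the proof of Corollary~\ref{Coro:CV-mu-pas-borne} hinges only on the jump-chain component $\sigma_n$ and carries over to a continuous component evolving linearly in any Euclidean space (the passage from the chain to the continuous part being purely a Gronwall-type estimate on the same probability space), one may equivalently apply the corollary columnwise to the $\R^d$-valued processes $t \mapsto \Phi_{\sigma_n}(t) e_k$ for $k = 1, \dotsc, d$ and observe that the extracted subsequence depends only on the law of $\sigma_n$. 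Either way, one obtains a convexified Markov process $(\nu_\ast, \mu_\ast, P_\ast, B)$ and, along a subsequence,
\[\mathbb P \po \sup_{t \in [0,T]} \norm{\Phi_{\sigma_n}(t) - \Phi_\sigma(t)} > \delta \pf \underset{n \to +\infty}\longrightarrow 0\]
for every $T, \delta > 0$, where $\sigma_n$ and $\sigma$ are the signals associated with $(\nu_n, \mu_n, P_n, B_n)$ and $(\nu_\ast, \mu_\ast, P_\ast, B)$, respectively.

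Next I would establish a uniform deterministic bound on $\log \norm{\Phi_{\sigma_n}(T)}$ and $\log \norm{\Phi_\sigma(T)}$. Set $K = \max_{i \in \cco 1, N \ccf} \norm{A_i}$. Since every mode of every convexified process for $A$ lies in $\co(A_1, \dotsc, A_N)$, each such mode has norm at most $K$, and therefore, for every signal $\tau$ and every $t \geqslant 0$,
\[e^{-K t} \leqslant \norm{\Phi_\tau(t)} \leqslant e^{K t},\]
the upper bound being immediate from Gronwall's lemma and the lower bound following from the same bound applied to the inverse flow, combined with $\norm{\Phi_\tau(t)} \geqslant \norm{\Phi_\tau(t)^{-1}}^{-1}$. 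In particular, $\abs{\log \norm{\Phi_{\sigma_n}(T)}} \leqslant KT$ and $\abs{\log \norm{\Phi_\sigma(T)}} \leqslant KT$, uniformly in $n$ and in the realization of the signal.

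Finally I would conclude by dominated convergence. Fix $T > 0$. The map $\log \norm{\cdot}$ is continuous on $\{M \in \mathcal M_d(\R) \mid \norm{M} \geqslant e^{-KT}\}$, hence the convergence in probability of $\Phi_{\sigma_n}(T)$ to $\Phi_\sigma(T)$ obtained in the first step implies the convergence in probability of $\log \norm{\Phi_{\sigma_n}(T)}$ to $\log \norm{\Phi_\sigma(T)}$. Combined with the uniform bound $\abs{\log \norm{\Phi_{\sigma_n}(T)}} \leqslant KT$, the bounded convergence theorem yields \eqref{eq:cv-esperance-log}.

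The only genuinely non-routine point is the first step, namely making precise that Corollary~\ref{Coro:CV-mu-pas-borne} can be upgraded from convergence of a single trajectory $\Phi_{\sigma_n}(\cdot) x_0$ to uniform convergence of the whole matrix flow; this is exactly what is meant by the phrase ``a straightforward consequence'' in the excerpt, and every other step of the proof is a simple dominated-convergence argument on top of the uniform two-sided exponential control of $\norm{\Phi_\tau(t)}$.
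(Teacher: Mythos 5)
Your proposal is correct and follows essentially the same route as the paper: apply Corollary~\ref{Coro:CV-mu-pas-borne} to the matrix-valued PDMP $(\Phi_{\sigma_n},\sigma_n)$, use the uniform two-sided bound $e^{-Kt}\leqslant\norm{\Phi_\tau(t)}\leqslant e^{Kt}$, and conclude by bounded convergence. The details you supply (continuity of $\log\norm{\cdot}$ away from the origin, uniform integrability) are exactly what the paper leaves implicit in its phrase about the flow being bounded on compact time intervals uniformly in $\sigma$.
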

 
\section{On the equality between \texorpdfstring{$\lambda_{\mathrm d}(A)$}{lambda d(A)} and \texorpdfstring{$\lambda_{\mathrm p}^{\sup}(A)$}{lambda p sup(A)}}
\label{SecCharactSupMarkov}

One of the difficulties in analysis of $\lambda_{\mathrm p}^{\sup}(A)$ is that it may fail to be reached by some $\lambda_{\mathrm p}(\nu, \mu, P, A)$, as detailed next.

\begin{expl}
Consider the case $N = d = 2$ with
\[A_1=\begin{pmatrix}0&-1\\1&-1\end{pmatrix},\quad A_2=\begin{pmatrix}0&1\\-1&-1\end{pmatrix}, \quad M = \frac12A_1+\frac12A_2=\begin{pmatrix}0&0\\0&-1\end{pmatrix}.\] 
Then the spectral abscissa of $M$ is equal to 0. From Proposition~\ref{prop:converge} stated below (since  it 
is easy to check that $M$ satisfies condition (C) in Definition~\ref{defi:cond-C} below),
for $\nu = (\frac{1}{2}, \frac{1}{2})$ and $P = \begin{pmatrix}\frac{1}{2} & \frac{1}{2} \\ \frac{1}{2} & \frac{1}{2}\end{pmatrix}$, we have $\lim_{\mu\to+\infty}\lambda_{\mathrm{p}}(\nu,\mu,P,A)=0$. Hence, on the one hand, $\lambda_{\mathrm{p}}^{\sup}(A) \geqslant 0$. 

On the other hand, it is easily seen that the standard Euclidean norm is strictly decreasing along both flows $\dot x= A_ix$, $i=1,2$, so that $\log\norm{\Phi_\sigma(t)} <0$ for any piecewise constant $\sigma$. From Proposition \ref{prop:condition-initiale} and \eqref{eq:LyapProba}, $\lambda_{\mathrm{p}}(\nu,\mu,P,A)<0$ for every choice of the Markov process $(\nu,\mu,P)$.

Thus, $\lambda_{\mathrm{p}}^{\sup}(A)$ is not reached. 
\end{expl}

The equality cases between $\lambda_{\mathrm d}(A)$ and $\lambda_{\mathrm p}^{\sup}(A)$ will be addressed by studying the equality cases in \eqref{eq:triplet}. 
A helpful property is that, contrarily to what happens for $\lambda_{\mathrm p}^{\sup}(A)$, the supremum in the definition of $\lambda_{\mathrm p}^{\mathrm{conv}}(A)$ is always reached, as shown next.

\begin{prop}\label{prop:maximizing}
There exists a convexified Markov process  $(\nu, \mu, P, B)$ for $A$ such that  $\lambda_{\mathrm p}^{\mathrm{conv}}(A)=\lambda_{\mathrm p}(\nu, \mu, P, B)$. 
\end{prop}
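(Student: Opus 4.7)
The plan is to take a maximizing sequence of convexified Markov processes, reduce each to the strongly connected/invariant case so that the subadditivity identity \eqref{eq:LyapProba} applies, pass to the limit via the compactification result (Corollary~\ref{coro:continuity}), and exploit the inequality $\lambda_{\mathrm p} \leqslant \frac{1}{T}\mathbb E[\log\|\Phi_\sigma(T)\|]$ for each fixed $T$.

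First, by definition of $\lambda_{\mathrm p}^{\mathrm{conv}}(A)$, pick a sequence $(\nu_n, \mu_n, P_n, B_n)_{n\in\N}$ of convexified Markov processes for $A$ with $\lambda_{\mathrm p}(\nu_n, \mu_n, P_n, B_n) \to \lambda_{\mathrm p}^{\mathrm{conv}}(A)$. Next, regarding each such process as a genuine continuous-time Markov process with modes $B_n$, Proposition~\ref{prop:condition-initiale} applies and yields that $\lambda_{\mathrm p}(\nu_n, \mu_n, P_n, B_n)$ is a convex combination of the Lyapunov exponents associated with the invariant measures $\nu_n^{[i]}$ on the recurrence classes of $P_n$. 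Hence some index $i_n$ satisfies $\lambda_{\mathrm p}(\nu_n^{[i_n]}, \mu_n, P_n, B_n) \geqslant \lambda_{\mathrm p}(\nu_n, \mu_n, P_n, B_n)$; restricting $P_n$ to the recurrence class $\mathcal I(i_n)$ produces a strongly connected chain with invariant measure $\nu_n^{[i_n]}$, keeping the modes among those of $B_n$, and the disjointness of the sets $I_j$ defining the convexified structure is preserved. After this reduction (and relabelling), we may assume that each $(\nu_n, \mu_n, P_n, B_n)$ still maximizes asymptotically and, by \eqref{eq:LyapProba}, satisfies
\begin{equation*}
\lambda_{\mathrm p}(\nu_n, \mu_n, P_n, B_n) \ = \ \inf_{t>0} \frac{1}{t}\,\mathbb E\bigl[\log\norm{\Phi_{\sigma_n}(t)}\bigr] \ \leqslant \ \frac{1}{T}\,\mathbb E\bigl[\log\norm{\Phi_{\sigma_n}(T)}\bigr] \quad \text{for every } T>0.
\end{equation*}

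Now apply Corollary~\ref{coro:continuity} to extract a subsequence (still indexed by $n$) along which $(\nu_n, \mu_n, P_n, B_n)$ converges in the sense of that corollary to a convexified Markov process $(\nu_\ast, \mu_\ast, P_\ast, B)$ for $A$, and for every $T>0$ one has $\mathbb E[\log\|\Phi_{\sigma_n}(T)\|]\to \mathbb E[\log\|\Phi_\sigma(T)\|]$. Passing to the limit $n\to\infty$ in the displayed inequality yields, for every $T>0$,
\begin{equation*}
\lambda_{\mathrm p}^{\mathrm{conv}}(A) \ \leqslant \ \frac{1}{T}\,\mathbb E\bigl[\log\norm{\Phi_\sigma(T)}\bigr].
\end{equation*}
Taking $\limsup$ as $T\to+\infty$ of the right-hand side gives $\lambda_{\mathrm p}^{\mathrm{conv}}(A) \leqslant \lambda_{\mathrm p}(\nu_\ast, \mu_\ast, P_\ast, B)$. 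The reverse inequality holds by definition of $\lambda_{\mathrm p}^{\mathrm{conv}}(A)$, since $(\nu_\ast, \mu_\ast, P_\ast, B)$ is a convexified Markov process for $A$, and equality follows.

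The only step that requires a little care is the reduction to the strongly connected/invariant case: one must check that the restricted process is genuinely still a convexified Markov process for $A$ (which is immediate because the subsets $I_j$ partitioning $\llbracket 1, N\rrbracket$ stay pairwise disjoint and nonempty once we drop some of them), and that Proposition~\ref{prop:condition-initiale}, stated for Markov processes with modes in $A$, applies verbatim to Markov processes whose modes are the $B_j$'s (its proof uses only a uniform bound on $\|B_j\|$, which is guaranteed by $\|B_j\|\leqslant \max_i\|A_i\|$). Once this is done, the rest is a direct combination of subadditivity with the compactification theorem; the crucial ingredient that makes everything work — and that is the real obstacle addressed elsewhere in the paper — is precisely the continuity statement \eqref{eq:cv-esperance-log} provided by Corollary~\ref{coro:continuity}, without which the limit process would only be known to arise as a law-limit on compact intervals, insufficient to transfer bounds on expected logarithms of the flow.
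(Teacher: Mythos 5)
Your proposal is correct and follows essentially the same route as the paper's proof: maximizing sequence, reduction via Proposition~\ref{prop:condition-initiale} to an invariant initial law so that \eqref{eq:LyapProba} applies, passage to the limit through Corollary~\ref{coro:continuity}, and the trivial reverse inequality. You merely spell out in more detail the reduction to a single recurrence class and the applicability of Proposition~\ref{prop:condition-initiale} to modes $B_j$, which the paper handles implicitly with ``without loss of generality, $\nu_n$ is invariant under $P_n$.''
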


\begin{proof}
Let $(\nu_n, \mu_n, P_n, B_n)_{n \in \mathbb N}$ be a maximizing sequence for $\lambda_{\mathrm p}^{\mathrm{conv}}(A)$, i.e., $\lambda_{\mathrm p}(\nu_n, \mu_n,\allowbreak P_n,\allowbreak B_n) \to \lambda_{\mathrm p}^{\mathrm{conv}}(A)$ as $n \to \infty$. By Proposition~\ref{prop:condition-initiale}, with no loss of generality, we may assume that $\nu_n$ is invariant under $P_n$ for every $n \in \mathbb N$.

By Corollary~\ref{coro:continuity}, there exists a convexified Markov process $(\nu, \mu, P, B)$ for $A$ such that, for every $T > 0$, \eqref{eq:cv-esperance-log} holds. Then, using \eqref{eq:defLambdaP} and \eqref{eq:LyapProba},
\begin{align*}
\lambda_{\mathrm p}(\nu, \mu, P, B) & = \limsup_{t \to +\infty} \frac{1}{t} \mathbb E[\log\norm{\Phi_{\sigma}(t)}] = \limsup_{t \to +\infty} \lim_{n \to +\infty} \frac{1}{t} \mathbb E[\log\norm{\Phi_{\sigma_n}(t)}] \\
& \geqslant \lim_{n \to +\infty} \lambda_{\mathrm p}(\nu_n, \mu_n, P_n, A) = \lambda_{\mathrm p}^{\mathrm{conv}}(A),
\end{align*}
hence the conclusion.
\end{proof}

We are now ready to prove Theorem~\ref{thm:lambda-p-conv}.

\begin{proof}[Proof of Theorem~\ref{thm:lambda-p-conv}]
Let us prove \ref{thm:main-iff-conv}. Assume first that $\lambda_{\mathrm d}(A) = \lambda_{\mathrm p}^{\mathrm{conv}}(A)$ and consider a convexified Markov process $(\nu, \mu, P, B)$ at which $\lambda_{\mathrm p}^{\mathrm{conv}}(A)$ is attained, whose existence is established in Proposition~\ref{prop:maximizing}. Hence
\[
\lambda_{\mathrm d}(B) \leqslant \lambda_{\mathrm d}(A) = \lambda_{\mathrm p}^{\mathrm{conv}}(A) = \lambda_{\mathrm p}(\nu, \mu, P, B) \leqslant \lambda_{\mathrm d}(B),
\]
implying that $\lambda_{\mathrm p}(\nu, \mu, P, B) = \lambda_{\mathrm d}(B)$. Let $i$ be an index belonging to a recurrent class of $P$ and accessible from $\nu$. Then, by Theorem~\ref{TheoPNotSC-ContinuousTime}, we deduce that $\spr(e^{B_i t}) = e^{\lambda_{\mathrm d}(B) t}$ for every $t \geqslant 0$, which yields $\lambda(B_i) = \lambda_{\mathrm d}(B)$, and the conclusion follows since $B_i \in \co(A)$ and $\lambda_{\mathrm d}(B) = \lambda_{\mathrm d}(A)$.

To prove the converse implication, let $M \in \co(A)$ be such that $\lambda(M) = \lambda_{\mathrm d}(A)$. Then the process $\dot x = M x$ is a convexified Markov process for $A$, with parameters $(\nu, \mu, P, B)$ given by $B = (M)$ and $(\nu, \mu, P)$ determining the (constant) Markov chain on a single state. Hence
\[\lambda_{\mathrm d}(A) = \lambda(M) = \lambda_{\mathrm p}(\nu, \mu, P, B) \leqslant \lambda_{\mathrm p}^{\mathrm{conv}}(A) \leqslant \lambda_{\mathrm d}(A),\]
yielding the conclusion.

The proof of \ref{thm:main-a} follows immediately from \ref{thm:main-iff-conv} and \eqref{eq:triplet}. 

To prove \ref{thm:main-b}, notice that, by the continuity of the spectral abscissa function, for every $\eta>0$, there exists $\varepsilon>0$ such that 
$\lambda(M^\varepsilon)>\lambda(M)-\eta= {\lambda_{\mathrm d}(A)}-\eta$. 
Let $(\nu_n, \mu_n, P_n)_{n\in\N}$ be a sequence of Markov processes for $A$ such that \eqref{eq:condC} holds true. 
Then, for $n$ large enough
\[\lambda_{\mathrm p}(\nu_n, \mu_n, P_n,A)>{\lambda_{\mathrm d}(A)}-\eta.\]
Hence $\lambda_{\mathrm d}(A) < \lambda_{\mathrm p}^{\sup}(A) + \eta$ and, since $\eta > 0$ is arbitrary, we deduce that $\lambda_{\mathrm d}(A) \leqslant \lambda_{\mathrm p}^{\sup}(A)$, yielding the conclusion thanks to \eqref{LambdaPLeqLambdaD-Continuous}.
\end{proof}

Theorem~\ref{thm:lambda-p-conv}~\ref{thm:main-b} raises the question of verifying the condition stated in \eqref{eq:condC}. If $M = A_i$ for some $i \in \llbracket 1, N\rrbracket$, \eqref{eq:condC} trivially holds by taking $M^\varepsilon = A_i$ and the Markov chain constantly equal to $i$. Otherwise, one can expect $M$ to be attained through arbitrarily fast switching. This leads to the question of the convergence of the Lyapunov exponent in this regime, which is the subject of the sequel of this section.

Let us introduce an explicit algebraic condition under which we are able to prove the convergence of Lyapunov exponents stated in \eqref{eq:condC}.

\begin{defi}[Condition (C)]
\label{defi:cond-C}
Let $M$ be in $\mathcal M_d(\R)$ and denote by $\xi_1>\xi_2>\dots>\xi_k$ the distinct real parts of the eigenvalues of $M$ (with $k\leqslant d$). 
For $j\in\llbracket 1,k\rrbracket$, denote by $E_j$ the space spanned by the generalized eigenvectors of $M$ corresponding to eigenvalues with real part $\xi_j$ and by $n_j$ the dimension of $E_j$. Let $F_M$ be the vector field on the $(d-1)$-dimensional unit sphere $S^{d-1}$ obtained by projecting $x\mapsto Mx$, i.e., $F_M(x)=Mx-(x\cdot Mx)x$. 

Given $A = (A_1, \dotsc, A_N) \in \mathcal M_d(\mathbb R)^N$, we say that $M$ satisfies \emph{condition (C) for $A$} if,  
for every $j\in\llbracket   2,k\rrbracket$,  there exists $i \in   \llbracket 1,N\rrbracket$  such that, for every $\theta \in  E_j\cap S^{d-1}$, one has  
$F_{A_i}(\theta)\not \in \bigoplus_{r\geqslant j} E_r$.
\end{defi}

Condition (C) can be used to obtain the following result on the convergence of Lyapunov exponents.

\begin{prop}\label{prop:converge}
Let $A = (A_1, \dotsc, A_N) \in \mathcal M_d(\mathbb R)^N$, $\pi = (\pi_1, \dotsc, \pi_N) \in (0, 1]^N$ be a probability vector, and define $M = \sum_{j=1}^N \pi_j A_j$. Let $P \in \mathcal M_N(\mathbb R)$ be the matrix with all rows equal to $\pi$ and $(\mu_n)_{n \in \mathbb N}$ be a sequence tending to $+\infty$. If $M$ satisfies condition (C) for $A$, then the sequence $(\pi, \mu_n, P)_{n\in\N}$ of Markov processes for $A$ satisfies
\[
\lim_{n \to \infty} \lambda_{\mathrm p}(\pi, \mu_n, P, A) = \lambda(M).
\]
\end{prop}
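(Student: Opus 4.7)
The plan is to establish matching upper and lower bounds on $\lim_{n}\lambda_{\mathrm p}(\pi,\mu_n,P,A)$.

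\textbf{Upper bound.} Viewed as a (convexified) Markov process for $A$, the sequence $(\pi,\mu_n,P)$ has all jump rates $\mu_n\pi_j$ divergent (since $\pi_j>0$ and $\mu_n\to+\infty$), so $\llbracket 1,N\rrbracket$ forms a single recurrence class at the fastest timescale in the decomposition of Theorem~\ref{Thm:Landim}, with stationary distribution $\pi$. Corollary~\ref{coro:continuity} then identifies the subsequential limit convexified process as the trivial Markov chain with single mode $\sum_j\pi_jA_j=M$, for which $\Phi_\sigma(T)=e^{MT}$ deterministically; that is, up to a subsequence, $\mathbb E[\log\|\Phi_{\sigma_n}(T)\|]\to\log\|e^{MT}\|$ for every $T>0$, and the uniqueness of the limit promotes this to convergence of the full sequence. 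Since $\pi$ is $P$-invariant, \eqref{eq:LyapProba} gives $\lambda_{\mathrm p}(\pi,\mu_n,P,A)\leqslant T^{-1}\mathbb E[\log\|\Phi_{\sigma_n}(T)\|]$; letting $n\to+\infty$ yields $\limsup_n\lambda_{\mathrm p}\leqslant T^{-1}\log\|e^{MT}\|$ for every $T>0$, and then $T\to+\infty$ yields $\limsup_n\lambda_{\mathrm p}\leqslant\lambda(M)$.

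\textbf{Lower bound setup.} For this I would pass to the polar PDMP $(\theta_n,\sigma_n)$ on $S^{d-1}\times\llbracket 1,N\rrbracket$ with $\theta_n=x_n/|x_n|$ and dynamics $\dot\theta=F_{A_\sigma}(\theta)$ between jumps, satisfying
\[
\log|x_n(T)|-\log|x_n(0)|\ =\ \int_0^T \theta_n(s)\cdot A_{\sigma_n(s)}\theta_n(s)\,ds.
\]
Under condition~(C), the transversality furnished by the various $F_{A_i}$'s yields unique ergodicity of $(\theta_n,\sigma_n)$ for $n$ large enough, with invariant measure $\nu_n$, so that $\lambda_{\mathrm p}(\pi,\mu_n,P,A)=\int\theta\cdot A_i\theta\,d\nu_n(\theta,i)$ by Birkhoff's ergodic theorem. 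Extracting a weak-$\ast$ subsequential limit $\nu_\infty$, a classical averaging argument (the $\sigma$-marginal mixes instantaneously to $\pi$ as $\mu_n\to+\infty$) shows that its $\sigma$-marginal is $\pi$ and that its $\theta$-marginal $\bar\nu_\infty$ is invariant for the averaged flow $\dot\theta=F_M(\theta)$, giving
\[
\lim_n\lambda_{\mathrm p}(\pi,\mu_n,P,A)\ =\ \int_{S^{d-1}}\theta\cdot M\theta\,d\bar\nu_\infty(\theta).
\]

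\textbf{Role of condition~(C) and main obstacle.} Every $F_M$-invariant probability measure on $S^{d-1}$ decomposes as $\sum_{j=1}^k c_j\nu^{(j)}$, with $\nu^{(j)}$ supported on the forward-invariant set $E_j\cap S^{d-1}$ and $\int\theta\cdot M\theta\,d\nu^{(j)}=\xi_j$; hence $\lim_n\lambda_{\mathrm p}=\sum_j c_j\xi_j\leqslant\xi_1=\lambda(M)$, with equality if and only if $c_j=0$ for every $j\geqslant 2$. The crux of the proof, and the main technical obstacle, is to exclude $c_{j_0}>0$ for some $j_0\geqslant 2$: if $\bar\nu_\infty$ charged $V_{j_0}=\bigoplus_{r\geqslant j_0}E_r$, then condition~(C) supplies an index $i=i(j_0)$ such that $F_{A_i}(\theta)$ has a uniform nonzero component transverse to $V_{j_0}$ over every $\theta\in E_{j_0}\cap S^{d-1}$. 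A Foster--Lyapunov argument applied to the $\nu_n$-invariance identity with a smooth test function measuring displacement out of $V_{j_0}$ in the faster directions, combined with the $\pi_i$-fraction of time during which $\sigma_n=i$, should yield a uniform-in-$n$ lower bound on the $\nu_n$-mass lying away from $V_{j_0}$, contradicting the assumed concentration. Designing such a test function and quantitatively transferring the pointwise condition~(C) into a bound on the invariant measures (a step which also subsumes the unique ergodicity invoked earlier) is where the bulk of the technical work lies.
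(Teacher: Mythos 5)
Your upper bound is correct and self-contained: the compactification result (Corollary~\ref{coro:continuity}) does identify the limit process as the deterministic flow $\dot x=Mx$ (all jump rates $\mu_n\pi_j$ diverge at the same rate and $\pi$ is the invariant law), and combining $\lambda_{\mathrm p}(\pi,\mu_n,P,A)\leqslant T^{-1}\mathbb E[\log\norm{\Phi_{\sigma_n}(T)}]$ with $T\to+\infty$ gives $\limsup_n\lambda_{\mathrm p}\leqslant\lambda(M)$ without condition (C). For the lower bound you also correctly identify the paper's strategy: pass to the projective process $(\theta,\sigma)$ on $S^{d-1}\times\llbracket 1,N\rrbracket$, express the exponent through its invariant measures (as in \cite{Benaim2014Stability}; note that no unique ergodicity is needed — the paper's Proposition~\ref{prop:PierreP1} controls \emph{every} invariant measure, which handles both bounds at once), and use condition (C) in a Foster--Lyapunov argument to rule out mass on $\bigoplus_{r\geqslant j}E_r$ for $j\geqslant 2$.

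However, the step you flag as "where the bulk of the technical work lies" is in fact the entire substance of the proof, and leaving it at "should yield" is a genuine gap. The difficulty is that near $E_j\cap S^{d-1}$ ($j\geqslant 2$) the averaged drift $F_M\cdot\nabla h$ of any candidate Lyapunov function $h$ vanishes, so the naive generator computation gives no sign. The paper resolves this by first constructing (Lemma~\ref{lem:foncth}) a function $h$ on $S^{d-1}$ with $\nabla h\cdot F_M<0$ off $\bigcup_i E_i$, critical on each $E_i\cap S^{d-1}$, and with a quantitatively negative Hessian there, and then perturbing it by $\mu$-dependent correctors,
\[
f(\theta,\sigma)=h(\theta)+\frac1\mu\bigl(F_\sigma(\theta)-F_M(\theta)\bigr)\cdot\nabla h(\theta)-\frac{1}{4\mu^2}\sum_{j=2}^k\psi_j(\theta)\1_{s(j)}(\sigma),
\]
where $s(j)$ is the index supplied by condition (C). The first corrector converts the Hessian bound into a strictly negative term of order $1/\mu$ near $E_j$ \emph{only for the mode} $\sigma=s(j)$; the second corrector, at order $1/\mu^2$, transfers this gain to the other modes via the jump part of the generator. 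One then checks $Lf\leqslant c\mu^{-1}(\varepsilon\1_{K_1}-\1_{K_1^c})$ and integrates against an invariant measure. Your proposal contains neither the construction of $h$ (whose Hessian estimate on $E_j\cap S^{d-1}$ is what makes condition (C) usable) nor the two-level corrector mechanism, and without them the claimed "uniform-in-$n$ lower bound on the mass away from $V_{j_0}$" is not established.
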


From the results in \cite[Section~2.5]{Benaim2014Stability}, the probabilistic Lyapunov exponent can be expressed in terms of the invariant measures of the Markov process $(\theta(t), \sigma(t))_{t \geqslant 0} = (x(t) / \norm{x(t)}, \sigma(t))_{t \geqslant 0}$ on $S^{d-1}\times\cco 1,N\ccf$. As a consequence, Proposition~\ref{prop:converge} follows from a simple combination of the results of \cite{Benaim2014Stability} and of the following proposition, whose proof is given in Appendix~\ref{app:B}.

\begin{prop}\label{prop:PierreP1}
Let $A = (A_1, \dotsc, A_N) \in \mathcal M_d(\mathbb R)^N$, $\pi = (\pi_1, \dotsc, \pi_N) \in (0, 1]^N$ be a probability vector, and define $M = \sum_{j=1}^N \pi_j A_j$. Let $P \in \mathcal M_N(\mathbb R)$ be the matrix with all rows equal to $\pi$, $\mu > 0$, and consider the Markov process $(\theta(t),\sigma(t))_{t\geqslant 0}$ on $S^{d-1} \times \llbracket 1, N\rrbracket$ obtained from $(\pi, \mu, P)$.

If $M$ satisfies condition (C) for $A$, then, for every $\varepsilon \in(0,1]$ and every neighborhood $K$ of $E_1$ in $S^{d-1}$, there exists $\mu_0>0$ such that, for every $\mu\geqslant \mu_0$ and every invariant measure $\rho$ of the process $(\theta(t),\sigma(t))_{t\geqslant 0}$, one has $\rho(K\times\cco 1,N\ccf) \geqslant 1 -\varepsilon$.
\end{prop}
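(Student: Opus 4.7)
The plan is to argue by contradiction. Suppose the conclusion fails: there exist $\varepsilon\in(0,1]$, a neighborhood $K$ of $E_1$ in $S^{d-1}$, a sequence $\mu_n\to+\infty$, and invariant probability measures $\rho_n$ of the PDMP $(\theta,\sigma)$ on $S^{d-1}\times\llbracket 1,N\rrbracket$ with jump rate $\mu_n$ such that $\rho_n(K^c\times\llbracket 1,N\rrbracket)>\varepsilon$. By weak-$\ast$ compactness on the compact space $S^{d-1}\times\llbracket 1,N\rrbracket$, extract a subsequential limit $\rho_\infty$. Since all rows of $P$ coincide with $\pi$, each jump resamples $\sigma$ according to $\pi$ independently of the current state, and as $\mu_n\to+\infty$ the conditional law of $\sigma$ given $\theta$ under $\rho_\infty$ is forced to equal $\pi$. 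Rigorously this is obtained by testing $\int \mathcal L_n f\,d\rho_n=0$ against $f(\theta,\sigma)=f_0(\theta)h(\sigma)$, dividing by $\mu_n$, and passing to the limit. Thus $\rho_\infty=\hat\rho_\infty\otimes\pi$ with $\hat\rho_\infty(K^c)\geqslant\varepsilon$.

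Testing the invariance identity against functions $g$ depending only on $\theta$ yields $\int F_{A_\sigma}(\theta)\cdot\nabla g(\theta)\,d\rho_n=0$; passing to the limit and using the product structure of $\rho_\infty$ gives $\int F_M(\theta)\cdot\nabla g(\theta)\,d\hat\rho_\infty=0$, so $\hat\rho_\infty$ is invariant under the flow of $F_M$ on $S^{d-1}$. Each $E_j$ is $M$-invariant, hence $E_j\cap S^{d-1}$ is $F_M$-invariant. For $\theta_0\in S^{d-1}$, let $j^\ast(\theta_0)=\min\{j\mid\text{the component of }\theta_0\text{ in }E_j\text{ is nonzero}\}$; the strict ordering $\xi_1>\cdots>\xi_k$ implies that, as $t\to+\infty$, $e^{tM}\theta_0/\|e^{tM}\theta_0\|$ converges to $E_{j^\ast}\cap S^{d-1}$, so the $F_M$-recurrent set in $S^{d-1}$ is contained in $\bigcup_{j=1}^k(E_j\cap S^{d-1})$. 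By Poincar\'e recurrence applied to $F_M$ and $\hat\rho_\infty$, we get a decomposition $\hat\rho_\infty=\sum_{j=1}^k\alpha_j\hat\rho_\infty^{(j)}$ with $\alpha_j\geqslant 0$ and $\hat\rho_\infty^{(j)}$ a probability measure supported on $E_j\cap S^{d-1}$. Since $E_1\cap S^{d-1}\subseteq K$, the lower bound $\hat\rho_\infty(K^c)\geqslant\varepsilon$ forces $\sum_{j\geqslant 2}\alpha_j\geqslant\varepsilon>0$.

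The main step is to derive a contradiction from $\alpha_j>0$ for some $j\geqslant 2$, which is precisely where condition (C) is used. The plan is a descending induction on $j\in\{k,k-1,\dots,2\}$: assume $\alpha_r=0$ for every $r>j$. Choose $i_j$ provided by condition (C); for every $\theta\in E_j\cap S^{d-1}$ the component of $A_{i_j}\theta$ in $\bigoplus_{r<j}E_r$ is nonzero, hence uniformly bounded below in norm by compactness of $E_j\cap S^{d-1}$. Since at the leading order the limit $\rho_\infty$ is a product and all modes average to $F_M$, the effect of condition (C) surfaces only through the sub-leading correction. The plan is to expand $\rho_n=\hat\rho_n\otimes\pi+\mu_n^{-1}\varrho_n^{(1)}+o(\mu_n^{-1})$ (stochastic averaging) and to write the refined invariance identity for a Lyapunov-type function $V_j$ on $S^{d-1}$ that vanishes on $\bigoplus_{r\geqslant j}E_r\cap S^{d-1}$ and whose lowest-order Taylor expansion along the transverse direction supplied by $A_{i_j}$ is positive on $E_j\cap S^{d-1}$. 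The spectral gap $\xi_{j-1}-\xi_j>0$ combined with the inductive hypothesis then turns the extra covariance drift from the $1/\mu_n$-correction into a strictly positive rate of escape from $E_j\cap S^{d-1}$, contradicting $\alpha_j>0$.

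The main obstacle is precisely this sub-leading analysis: $V_j$ is naturally singular on the forbidden set, so it must be regularized while preserving the positivity of its averaged drift; the expansion of $\rho_n$ up to order $1/\mu_n$ has to be controlled uniformly in the weak-$\ast$ limit; and the descending induction is crucial, because without $\alpha_r=0$ for $r>j$ the transverse push provided by condition (C) might redirect mass into the very eigenspaces $E_r$ with $r>j$ one is still trying to exclude. An equivalent viewpoint rephrases the step as a Freidlin--Wentzell-type instability statement for $E_j\cap S^{d-1}$ under a PDMP whose small-noise amplitude $O(\mu_n^{-1/2})$ around $F_M$ has transverse covariance made nondegenerate precisely by condition (C). This pointwise transversality, rather than a weaker generic non-invariance condition on the $A_i$'s, is what allows the argument to produce a uniformly positive escape rate from each $E_j\cap S^{d-1}$ with $j\geqslant 2$.
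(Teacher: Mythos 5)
The soft part of your argument (weak-$\ast$ limit $\rho_\infty=\hat\rho_\infty\otimes\pi$, invariance of $\hat\rho_\infty$ under the flow of $F_M$, and concentration of $\hat\rho_\infty$ on $\bigcup_j (E_j\cap S^{d-1})$) is sound, but the decisive step — ruling out mass on $E_j\cap S^{d-1}$ for $j\geqslant 2$ — is left as a plan rather than a proof, and the plan as stated has a genuine gap. You propose to expand $\rho_n=\hat\rho_n\otimes\pi+\mu_n^{-1}\varrho_n^{(1)}+o(\mu_n^{-1})$ and to extract a positive escape rate from a singular Lyapunov-type function $V_j$; neither the existence and uniform control of the corrector measure $\varrho_n^{(1)}$ in the weak-$\ast$ topology, nor the construction of a regularized $V_j$ whose averaged drift stays strictly positive, is carried out, and you acknowledge both as open obstacles. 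This is exactly where condition (C) must enter quantitatively, so without it the proof is incomplete.

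The paper circumvents the measure expansion entirely by working on the test-function side. Lemma~\ref{lem:foncth} produces a single global function $h$ on $S^{d-1}$ that is minimal exactly on $E_1\cap S^{d-1}$, strictly decreasing along $F_M$ off $\bigcup_i E_i$, and — crucially — satisfies the Hessian estimate $v\cdot\nabla^2h(\theta)v\leqslant -\abs{\mathrm{pr}_{i-1}(v)}^2$ on $E_i\cap S^{d-1}$. One then takes the corrected Lyapunov function
\[
f(\theta,\sigma)=h(\theta)+\frac1\mu\bigl(F_\sigma(\theta)-F_M(\theta)\bigr)\cdot\nabla h(\theta)-\frac{1}{4\mu^2}\sum_{j=2}^k\psi_j(\theta)\1_{s(j)}(\sigma),
\]
where the $O(1/\mu)$ term is the standard Poisson corrector (its jump part cancels the fluctuation $F_\sigma\cdot\nabla h-F_M\cdot\nabla h$, and what survives at order $1/\mu$ is $F_\sigma\cdot\nabla^2h\,F_\sigma$, which is strictly negative on $E_j\cap S^{d-1}$ for $\sigma=s(j)$ precisely by condition (C) and the Hessian bound), and the $O(1/\mu^2)$ term handles the states $\sigma\neq s(j)$. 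The identity $0=\int Lf\,\dd\rho$ then gives the bound $\rho(K_1\times\cco 1,N\ccf)\geqslant 1-\varepsilon$ directly, for all large $\mu$ and all invariant $\rho$ at once, with no contradiction argument, no expansion of $\rho_n$, and no induction on $j$ (a single $h$ handles all levels simultaneously because condition (C) pushes mass toward lower indices, where $h$ is smaller). If you want to complete your route, the missing idea is to replace the expansion of the invariant measure by the explicit construction of such a corrected test function.
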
 

Notice that a similar result has recently been established in \cite{DU2021313}. More precisely, our result is similar in spirit to \cite[Theorem~1.1]{DU2021313} in the case where the diffusion coefficient $\delta$ is equal to $0$ and the limit cycle considered in \cite{DU2021313} is reduced to a single point, and our condition (C) corresponds to the second point of \cite[Assumption~1.2]{DU2021313}. The proof of \cite{DU2021313} is based on estimates on the exit times of the trajectory from neighborhoods of unstable equilibria, while our argument is based on the construction of a suitable Lyapunov function (similarly to \cite{Benaim2014Stability}, where the result is established in a particular case in dimension~$2$).

By combining Proposition~\ref{prop:converge} with Theorem~\ref{thm:lambda-p-conv}, one deduces the following statement.

\begin{coro}
\label{Coro:conditionC}
Let $A \in \mathcal M_d(\mathbb R)^N$. 
Assume that condition (C) holds for a dense subset of $\co(A)$.
 Then $\lambda_{\mathrm d}(A) = \lambda_{\mathrm p}^{\sup}(A)$ if and only if there exists 
 $M \in \co(A)$ such that $\lambda(M) = {\lambda_{\mathrm d}(A)}$.
\end{coro}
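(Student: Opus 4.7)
The direct implication is exactly Theorem~\ref{thm:lambda-p-conv}\ref{thm:main-a} and requires no additional argument, so the plan concerns only the converse. The strategy is to leverage the density assumption to construct, for every $\varepsilon>0$, an approximant $M^\varepsilon$ to which Proposition~\ref{prop:converge} applies, and then to invoke Theorem~\ref{thm:lambda-p-conv}\ref{thm:main-b}.

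Fixing $M\in\co(A)$ with $\lambda(M)=\lambda_{\mathrm d}(A)$ and $\varepsilon>0$, I aim to exhibit $M^\varepsilon \in \co(A)$ with $\norm{M-M^\varepsilon}<\varepsilon$, admitting a representation $M^\varepsilon = \sum_{j=1}^N \pi_j A_j$ with $\pi=(\pi_1,\dots,\pi_N)\in(0,1]^N$, and satisfying condition (C) for $A$. Given such an $M^\varepsilon$, Proposition~\ref{prop:converge} applied with the stochastic matrix $P$ having all rows equal to $\pi$ and with any sequence $\mu_n\to+\infty$ will yield
\[\lim_{n\to+\infty}\lambda_{\mathrm p}(\pi,\mu_n,P,A)=\lambda(M^\varepsilon),\]
so that the hypothesis of Theorem~\ref{thm:lambda-p-conv}\ref{thm:main-b} is met and the equality $\lambda_{\mathrm d}(A)=\lambda_{\mathrm p}^{\sup}(A)$ follows.

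The key step is therefore the construction of $M^\varepsilon$, which I plan to carry out via a density argument inside $\co(A)$. Let $D\subset\co(A)$ denote the set of matrices satisfying condition (C) for $A$, which is dense by assumption, and let $C_+\subset\co(A)$ denote the set of matrices of the form $\sum_j \pi_j A_j$ with $\pi_j>0$ for every $j$. A direct computation identifies $C_+$ with the relative interior of the polytope $\co(A)$ in its affine hull: in particular $C_+$ is relatively open, and it is dense in $\co(A)$, since for every $M_0\in\co(A)$ and $\eta\in(0,1]$ the combination $(1-\eta)M_0+\frac{\eta}{N}\sum_j A_j$ lies in $C_+$. Intersecting the dense set $D$ with the relatively open dense set $C_+$ produces a dense subset of $\co(A)$, from which $M^\varepsilon$ can be selected with $\norm{M-M^\varepsilon}<\varepsilon$.

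I do not anticipate any serious obstacle: once the density of $D\cap C_+$ is granted, the proof becomes a direct assembly of Theorem~\ref{thm:lambda-p-conv}\ref{thm:main-b} and Proposition~\ref{prop:converge}. The only mildly delicate point is the topological observation that the density of condition (C) in $\co(A)$ survives the additional positivity constraint on the convex coefficients needed to apply Proposition~\ref{prop:converge}.
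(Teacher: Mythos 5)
Your proposal is correct and follows exactly the route the paper intends: the paper gives no written proof beyond the remark that the corollary is obtained ``by combining Proposition~\ref{prop:converge} with Theorem~\ref{thm:lambda-p-conv}'', and your argument is precisely that combination. The only detail you add beyond the paper's implicit argument is the (correct) observation that the dense set where condition~(C) holds can be intersected with the relative interior of $\co(A)$ so that $M^\varepsilon$ admits a representation with strictly positive coefficients, as required by Proposition~\ref{prop:converge}.
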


From what precedes one would prove a full converse of Theorem~\ref{thm:lambda-p-conv}~\ref{thm:main-a} if for every $A  \in \mathcal M_d(\mathbb R)^N$
condition (C) held for a dense subset of $\co(A)$. Our next result, whose proof is provided in Appendix~\ref{app:B}, states that this is the case for dimensions up to $3$.

\begin{lemm}\label{lemma:dim23}
Let $d\leqslant 3$ and $A \in \mathcal M_d(\mathbb R)^N$ be irreducible. Then condition (C) holds for a dense subset of $\co(A)$.
\end{lemm}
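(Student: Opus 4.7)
The plan is to proceed by a dimensional case analysis, using the constraint $d\leqslant 3$ to reduce condition (C) to a handful of explicit flag-theoretic identities.

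For $d=1$ the range $\llbracket 2,k\rrbracket$ is empty so condition (C) is vacuous. For $d=2$ I would in fact prove the stronger statement that \emph{every} $M\in\co(A)$ satisfies (C): either $k(M)=1$ (vacuous), or $k(M)=2$, in which case $V_2(M):=\bigoplus_{r\geqslant 2} E_r(M) = E_2(M)$ is a proper line, and irreducibility of $A$ produces some $A_i$ that fails to preserve it, yielding (C) at $j=2$ immediately.

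For $d=3$ I would classify $M\in\co(A)$ by its spectral shape $\bigl(k(M),\dim E_1(M),\dim E_2(M)\bigr)$. Condition (C) is immediate in the following cases: $k(M)=1$ (vacuous); $k(M)=2$ with $\dim E_2(M)=1$ (the same line argument as in $d=2$); and at the level $j=3$ when $k(M)=3$ (since $E_3(M)$ is a proper line). Two configurations remain. First, $k(M)=2$ with $\dim E_2(M)=2$ \emph{automatically violates} (C), because the restriction $\pi_{E_1(M)}\circ A_i|_{E_2(M)} : E_2(M)\to E_1(M)$ from a two-dimensional space to a one-dimensional space must have nontrivial kernel by a dimension count. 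Second, when $k(M)=3$, condition (C) at $j=2$ is equivalent to the non-vanishing of at least one of the scalars
\[
f_i(M)\ :=\ \pi_{E_1(M)}A_iv_2(M)\qquad(i\in\llbracket 1,N\rrbracket),
\]
where $v_2(M)$ spans $E_2(M)$ and $\pi_{E_1(M)}$ denotes the projection onto $E_1(M)$ along $V_2(M)$.

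The lemma thus reduces to showing that neither bad configuration occupies an open subset of $\co(A)$. In the $k=3$ case I would exploit the convex relation
\[
\sum_{i=1}^N \alpha_i\,f_i(M)\ =\ \pi_{E_1(M)}\Bigl(\sum_{i=1}^N\alpha_iA_i\Bigr)v_2(M)\ =\ \xi_2(M)\,\pi_{E_1(M)}v_2(M)\ =\ 0,
\]
valid whenever $M=\sum_i\alpha_iA_i$ with $\alpha_i\geqslant 0$ summing to $1$. The functions $f_i$ are real-analytic on the open stratum $\{k=3\}$ of the affine hull of $A$; if they vanished simultaneously on a nonempty open subset, they would vanish on the whole connected component, so that the relation $A_iv_2(M)\in V_2(M)$ for all $i$ would persist as $M$ varies analytically. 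Differentiating this persistent identity along perturbations $M+t(A_j-A_l)$ inside the affine hull and iterating, the plan is to produce a subspace contained in $V_2(M)$ that is invariant under every $A_i$, contradicting the irreducibility of $A$. A similar analyticity plus differentiation argument applied to the map $M\mapsto E_1(M)$ on the open stratum where $k=2$, $\dim E_2=2$ occurs would handle the other bad configuration. The main obstacle is executing these irreducibility-based contradictions rigorously — turning the persistent inclusion $A_iv_2(M)\in V_2(M)$ (or the analogous identity in the other bad case) across a one-parameter family of perturbations into a genuine common proper invariant subspace for $A_1,\dots,A_N$. The restriction $d\leqslant 3$ intervenes crucially here, since it keeps the flag $E_j(M)\subset V_j(M)$ simple and the codimensions small enough for the iterated derivatives to close up.
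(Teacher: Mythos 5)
Your handling of the easy strata is correct and in places cleaner than the paper's: in $d=2$, and in $d=3$ for the cases $\dim E_2=1$ (at $j=2$) and $j=k=3$, the set $\bigoplus_{r\geqslant j}E_r$ is the line $E_j$ itself, $F_{A_i}(\theta)\in E_j$ forces $A_i\theta\in\mathrm{span}(\theta)$, and irreducibility directly yields a mode $i$ that works, so condition (C) holds at \emph{every} such $M$ (the paper only claims density and, for these cases, either runs its differentiation machinery or dismisses them as "even simpler"). For the stratum $k=3$ your reduction to the non-vanishing of $f_i(M)=\pi_{E_1(M)}A_iv_2(M)$ is exactly the paper's identity $w_1^T(s)A_iv_2(s)=0$. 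However, the step you defer as "the main obstacle" is the entire technical content of the paper's proof: writing $M(s)V(s)=V(s)D(s)$ with $W=(V^T)^{-1}$ and $R=V^{-1}V'$, one differentiation gives $W^TM'V=[R,D]+D'$, whose $(1,2)$ entry yields $r_{12}\equiv0$; a second differentiation (using that $s\mapsto M(s)$ is affine, so $M''=0$) gives $[R,[D,R]]=[R',D]+2[R,D']+D''$, whose $(1,2)$ entry yields $r_{13}r_{32}\equiv0$; the resulting dichotomy gives either $v_2$ constant (so $\mathrm{span}(v_2)$ is a common invariant line) or $w_1$ of constant direction (so $V_2=w_1^\perp$ is a common invariant plane for $A$). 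Without this computation your argument for that stratum is a plan rather than a proof.

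The more serious problem is the stratum $k=2$, $\dim E_1=1$, $\dim E_2=2$. Your dimension count showing that condition (C) \emph{identically fails} there is correct, but the proposed remedy --- showing by analyticity that this stratum has empty interior in $\co(A)$ --- cannot work. When $E_2(M)$ carries a complex-conjugate pair $\xi_2\pm i\beta$, $\beta\neq 0$, below a simple real eigenvalue $\xi_1>\xi_2$, this spectral configuration is an \emph{open} condition on $M$; hence the stratum is relatively open in $\co(A)$ as soon as it is nonempty, and no analytic-continuation or differentiation argument can show that a nonempty open set has empty interior. (One can even take an irreducible pair $A_1=M_0+\epsilon B$, $A_2=M_0-\epsilon B$ with $M_0$ of spectrum $\{1,\pm i\}$, so that the entire segment $\co(A)$ lies in this stratum.) So this step of your proposal would fail outright. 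It is worth adding that your correct observation here also exposes the weak point of the paper's own proof, which only carries out the computation for $k=d$ and asserts that it "carries over" via projectors to non-simple strata: in the configuration above the contradiction hypothesis is automatically satisfied by the very dimension count you give, and produces no identity analogous to $w_1^TA_iv_2=0$ that could be differentiated.
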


Thanks to Lemma~\ref{lemma:dim23}, we are now in position to prove Proposition~\ref{prop:dim23}.

\begin{proof}[Proof of Proposition~\ref{prop:dim23}]
By Theorem~\ref{thm:lambda-p-conv} \ref{thm:main-a}, it remains to prove that $\lambda_{\mathrm d}(A) = \lambda_{\mathrm p}^{\sup}(A)$ if there exists $M \in \co(A)$ such that $\lambda(M) = \lambda_{\mathrm d}(A)$. Hence, we assume that such a $M$ exists. We next use the block decomposition of matrices of $A$ used in the proof of Proposition~\ref{CoroFixedP-ContinuousTime} (and, in particular, the notations of \eqref{DecomposeAj}, extended to to the matrices in $\co(A)$) to deduce that there exists 
$j\in\llbracket 1,S\rrbracket$ such that $\lambda(M^{(j)})=\lambda(M)$. Note that $\lambda_{\mathrm d}(A^{(j)}) \leqslant \lambda_{\mathrm d}(A) = \lambda(M) = \lambda(M^{(j)}) \leqslant \lambda_{\mathrm d}(A^{(j)})$, implying that all these inequalities are in fact equalities. Thus, by applying Lemma~\ref{lemma:dim23} and Corollary~\ref{Coro:conditionC} to $M^{(j)}$ and $A^{(j)}$, we deduce that $\lambda_{\mathrm d}(A^{(j)}) = \lambda_{\mathrm p}^{\sup}(A^{(j)})$. Hence,
\[
\lambda_{\mathrm d}(A) = \lambda_{\mathrm d}(A^{(j)}) = \lambda_{\mathrm p}^{\sup}(A^{(j)}) \leqslant \lambda_{\mathrm p}^{\sup}(A) \leqslant \lambda_{\mathrm d}(A),
\]
yielding $\lambda_{\mathrm d}(A) = \lambda_{\mathrm p}^{\sup}(A)$.
\end{proof}

\section*{Acknowledgements}

The authors thank Edouard Strickler for fruitful discussions. G.~Mazanti was partially supported by ANR PIA funding number ANR-20-IDEES-0002. P.~Monmarch{\'e} acknowledges financial support by the French ANR grant SWIDIMS (ANR-20-CE40-0022).

\appendix

\section{Decomposition of a Markov chain}\label{Sec:MarkovDecomposition}

This section is devoted to the proof of Theorem~\ref{Thm:Landim}. It is based on the works of Landim and collaborators \cite{Landim2016,Landim2010,LandimSoftFD,LandimSoft}, to which we refer for more details and discussions. Even though the contents of  Theorem~\ref{Thm:Landim} are similar to those of Theorems~2.1, 2.7, and 2.12 of \cite{Landim2016}, we decided to state them in a modified way, more convenient in our context. Let us briefly highlight the main differences and explain how they are handled in the sequel.

\begin{itemize}
\item  In \cite{Landim2016}, the convergence after rescaling in time is given in term of the so-called soft topology, introduced in \cite{LandimSoft}, while in Theorem~\ref{Thm:Landim}~\ref{lan2_CVMarkov} and \ref{lan3_Pastransition} we state a convergence for the time marginals. The way to get this time marginals convergence instead of the soft one is established in \cite[Proposition~2.1]{LandimSoftFD}.

\item The case of a timescale which is strictly between $\theta^j$ and $\theta^{j+1}$ is not considered in \cite{Landim2016}. In particular, in \cite[Condition H1]{Landim2016}, the total rate of the limit Markov chain is supposed to be nonzero. Nevertheless, this assumption is not used to prove the convergence of the rescaled chain, it is just used, after the convergence is established, to say that the limit chain is not constant, so that in particular the next partition will be stricly coarser than the previous one. Replacing $\theta^{j+1}$ by some $\alpha$ with $\theta^j \ll \alpha \ll \theta^{j+1}$, the conditions \cite[H1--H3]{Landim2016} are still satisfied, except that in H1 the total rate is zero.

\item The fact that \eqref{eq:LandimCVX} and \eqref{eq:LandimCVX_integral} still hold with $\tilde \theta_n^j\simeq \theta_n^j$ is immediately obtained from the fact $\tilde \theta^j$ satisfies the same conditions \cite[H1--H3]{Landim2016} as $\theta^j$.
\end{itemize}

Throughout this section, we consider for each $n\in \N$ a  strongly connected  Markov chain $(\sigma_n(t))_{t\geqslant 0}$ on $\cco 1,N\ccf$ for some fixed $N\in\N$ with jump rates $(\lambda_n(i,j))_{i,j\in\cco 1,N\ccf}$. We suppose  that the conditions \eqref{cond:lambda_n} and \eqref{Eq:CondOrdered} are fulfilled. We denote by $\pi_n$ the unique invariant probability measure of $\sigma_n$.

For a nonempty $J\subset \cco 1,N\ccf$,  the \emph{trace process on $J$ associated with $\sigma_n$} is the process $(\sigma_n^J(t))_{t\geqslant 0}$ obtained from $\sigma_n$  by removing all the time spent outside of $J$. More precisely, denoting
\[T_n^J(t) \ = \ \int_0^t \1_{\sigma_n(s)\in J} \dd s\qquad\text{and}\qquad S_n^J(t) \ = \ \sup\{s\geqslant 0\mid T_n^J(s)\leqslant t\} \,,\]
we define $\sigma_n^J(t)  = \sigma_n(S_n^J(t))$. Remark that, the chain being strongly connected, $T_n^J(t) \rightarrow +\infty$ almost surely as $t \to +\infty$ and $S_n^J(t)$ is finite for all $t\geqslant 0$.

For $J\subset \cco 1,N\ccf$,  denote by $H_J$ and $H_J^+$ the hitting time of $J$ and the time of the first return to $J$, i.e.,
\[H_J \ = \ \inf\{t\geqslant 0\mid \sigma_n(t)\in J\},\qquad H_J^+ \ = \ \inf\{t\geqslant \tau_1\mid \sigma_n(t)\in J\}\,,\]
where $\tau_1 =\inf\{t\geqslant 0\mid \sigma_n(t)\neq \sigma_n(0)\}$ is the first jump time of the chain. For $i,j\in \cco 1,N\ccf$, denote $\lambda_n(i) = \sum_{k\neq i} \lambda_n(i,k)$ the holding rate of the chain at $i$, and $p_n(i,j) = \lambda_n(i,j)/\lambda_n(i)$ the transition probabilities from $i$. For two disjoint subsets $J_1$ and $J_2$ of $\cco 1,N\ccf$, the capacity between $J_1$ and $J_2$ is defined by
\[\mathrm{cap}_n\po J_1, J_2\pf \ = \ \sum_{i\in J_1} \pi_n(i) \lambda_n(i) \mathbb P_i\po H_{J_2} < H_{J_1}^+\pf\,.\]
Let $\mathcal E_1,\dotsc,\mathcal E_{\mathfrak{n}},\Delta$ be a partition of $\cco 1,N\ccf$ for some $\mathfrak{n}\geqslant 1$ with $\mathcal E_x\neq\emptyset$ for all $x\in\cco 1,\mathfrak{n}\ccf$ and let $\mathcal E = \bigcup_{x=1}^{\mathfrak{n}} \mathcal E_x$. It is proven in \cite[Proposition~6.1]{Landim2010} that the trace process $\sigma_n^{\mathcal E}$ is a Markov chain on $\mathcal E$ with rates
\[R_n(i,j) \ = \ \lambda_n(i) \mathbb P_i \po H_{\{j\}} = H^+_{\mathcal E}\pf\]
for $i\neq j$. For $x,y\in \cco 1,\mathfrak{n}\ccf$, $x\neq y$, denote by
\[r_n^{\mathcal E}(x,y) \ = \ \frac1{\pi_n(\mathcal E_x)}\sum_{i\in\mathcal E_x}\pi_n(i) \sum_{j\in\mathcal E_y} R_n(i,j)\] 
the mean (at equilibrium) rate at which the trace process jumps from $\mathcal E_x$ to $\mathcal E_y$. Consider the coarse-grained variable
\[\Psi^{\mathcal E}(i) \ = \ \sum_{x=1}^{\mathfrak{n}} x \1_{i\in\mathcal E_x}\,.\]
Finally, let $(\alpha_n)_{n\in\N}$, $(\beta_n)_{n\in\N}$ be two positive sequences. We consider the following conditions:
\begin{description}
\item[\textbf{H1.}]  For all $x,y\in\cco 1,\mathfrak n\ccf$ with $x\neq y$, there exists ${r^{\mathcal E}}(x,y)\geqslant 0$ so that
\[\beta_n r_n^{\mathcal E}(x,y) \ \underset{n\rightarrow +\infty}\longrightarrow \ r^{\mathcal E}(x,y)\,.\]
\item[\textbf{H2.}] For all $x\in \cco 1,\mathfrak{n}\ccf$ such that $\mathcal E_x$ is not a singleton and all $i,j\in \mathcal E_x$ with $i\neq j$,
\[\liminf_{n\rightarrow +\infty}\alpha_n \frac{\mathrm{cap}\po\{i\},\{j\}\pf}{\pi_n(\mathcal E_x)} \ > \ 0\,.\]
\end{description} 
Intuitively, \textbf{H2} means that mixing within a class $\mathcal E_x$ occurs in a time smaller than $\alpha_n$, and \textbf{H1} that transitions between different classes $\mathcal E_x$ and $\mathcal E_y$ occur at a time of order at least $\beta_n$.

\begin{prop}\label{prop:LandimTraceProcess}
Consider a partition $\mathcal E_1,\dots,\mathcal E_{\mathfrak{n}},\Delta$ of $\cco 1,N\ccf$ and two positive sequences $(\alpha_n)_{n\in\N}$, $(\beta_n)_{n\in\N}$ such that $\alpha_n \ll \beta_n$ and $\mathbf{H1}$--$\mathbf{H2}$ hold. Then, for $x\in \cco 1,\mathfrak{n}\ccf$ and for an initial condition $i\in\mathcal E_x$, $\po \Psi^{\mathcal E}(\sigma_n^{\mathcal E}(t\beta_n))\pf_{t\geqslant 0}$ converges in law in the Skorokhod topology toward the Markov chain $(X(t))_{t\geqslant 0}$ on $\cco 1,\mathfrak n\ccf$ with initial condition $x$ and jump rates $\po r^{\mathcal E}(y,z)\pf_{y,z\in\cco 1,\mathfrak{n}\ccf}$ as $n\rightarrow +\infty$.
\end{prop}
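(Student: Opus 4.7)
The plan is to apply the standard martingale characterization of Markov processes, with the main technical content being a one-block replacement estimate that uses the time-scale separation $\alpha_n\ll\beta_n$. Write $Z_n(t)=\sigma_n^{\mathcal E}(t\beta_n)$, which is a Markov chain on $\mathcal E$ with generator $\mathcal L_n g(i)=\beta_n\sum_{j\ne i}R_n(i,j)[g(j)-g(i)]$. For every bounded $F:\cco 1,\mathfrak n\ccf\to\R$, Dynkin's formula yields the martingale
$$M_n^F(t)=F(\Psi^{\mathcal E}(Z_n(t)))-F(\Psi^{\mathcal E}(Z_n(0)))-\int_0^t V_n^F(Z_n(s))\,ds,$$
where $V_n^F(i)=\sum_{y\ne\Psi^{\mathcal E}(i)}[F(y)-F(\Psi^{\mathcal E}(i))]\,\beta_n\sum_{j\in\mathcal E_y}R_n(i,j)$. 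The goal is to show that any limit point of $\Psi^{\mathcal E}(Z_n)$ solves the martingale problem for the generator $\mathcal LF(x)=\sum_{y\ne x}r^{\mathcal E}(x,y)[F(y)-F(x)]$ of $X$.

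First I would prove tightness of $\po\Psi^{\mathcal E}(Z_n)\pf_{n\in\N}$ in the Skorokhod space $D([0,\infty),\cco 1,\mathfrak n\ccf)$ via Aldous's criterion. The key input is H1 combined with a short escape-time estimate derived from H2: starting from any $i\in\mathcal E_x$, the trace chain equilibrates inside $\mathcal E_x$ on the time scale $\mathcal O(\alpha_n)$, which is negligible in the rescaled time $t\beta_n$, and then exits the class at a rate uniformly bounded by H1. This gives a uniform bound on the expected number of macroscopic transitions on any compact interval, which is the only thing Aldous's criterion requires on a finite state space.

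Second, and this is the main obstacle, I would establish the replacement lemma: for every such $F$ and every $T>0$,
$$\mathbb E_i\!\left[\,\left|\int_0^T\po V_n^F(Z_n(s))-\mathcal LF(\Psi^{\mathcal E}(Z_n(s)))\pf ds\right|\,\right]\underset{n\to\infty}\longrightarrow 0.$$
The intuition is that H2 forces the trace chain to mix inside each class $\mathcal E_x$ on the time scale $\alpha_n$, while macroscopic jumps occur on the scale $\beta_n\gg\alpha_n$; hence over any macroscopic window of time spent in $\mathcal E_x$, the empirical occupation of $Z_n$ on $\mathcal E_x$ is essentially the restriction of $\pi_n$ to $\mathcal E_x$. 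Averaging $V_n^F$ against this restricted measure gives precisely $\sum_{y\ne x}\beta_n r_n^{\mathcal E}(x,y)[F(y)-F(x)]$, which by H1 converges to $\mathcal LF(x)$. Quantitatively, one controls the $L^2(\pi_n)$-variance of $V_n^F$ restricted to each class by the corresponding Dirichlet form, whose size is governed by the capacity appearing in H2; the condition $\alpha_n\ll\beta_n$ is precisely what makes the resulting error vanish. This is the estimate carried out in \cite[Sections~3 and 6]{Landim2016}.

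Once tightness and the replacement lemma are in hand, the decomposition of $M_n^F$ shows that any subsequential limit of $\Psi^{\mathcal E}(Z_n)$ solves the martingale problem for $\mathcal L$ with initial condition $\delta_x$. Since $\mathcal L$ is a bounded generator on a finite state space, this martingale problem is well-posed and its unique solution is the law of the Markov chain $X$. Tightness together with uniqueness of subsequential limits then yields convergence of the full sequence in the Skorokhod topology, which concludes the proof.
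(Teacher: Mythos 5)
The paper does not prove this proposition from scratch: its entire proof is the citation ``This is \cite[Proposition~6.1]{Landim2016}'', together with the observation that the non-degeneracy hypothesis $\sum_{x,y} r^{\mathcal E}(x,y)\neq 0$ imposed there is not actually used (the proof resting on \cite{Landim2010}, where that hypothesis is absent). What you wrote is therefore not an alternative argument but an outline of the proof of the cited result, in the Beltr\'an--Landim martingale-problem style: Dynkin decomposition for $F\circ\Psi^{\mathcal E}$ applied to the trace chain, tightness, a replacement (local ergodicity) lemma turning $V_n^F$ into $\sum_{y\neq x}\beta_n r_n^{\mathcal E}(x,y)[F(y)-F(x)]$, and well-posedness of the limiting martingale problem on a finite state space. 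The structure is right, and your identification of \textbf{H1} as supplying the limit rates and of \textbf{H2} as supplying equilibration inside each class is correct. But the proposal is not self-contained: the one genuinely hard step, the replacement estimate, is itself deferred to \cite{Landim2016}, so in substance you rely on the same reference the paper cites, with more scaffolding around it.

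One step deserves a concrete caveat. You assert that a bound on the expected number of macroscopic transitions on compacts is ``the only thing Aldous's criterion requires on a finite state space''. That is too quick: Aldous's criterion requires that the probability of a macroscopic jump in $[\tau,\tau+\theta]$ be small uniformly over stopping times $\tau\leqslant T$ and $\theta\leqslant\delta$, and \textbf{H1} controls only the $\pi_n$-\emph{averaged} exit rates $r_n^{\mathcal E}(x,y)$, not the pointwise rates $\beta_n\sum_{j\in\mathcal E_y}R_n(i,j)$, which can be arbitrarily large from atypical states $i\in\mathcal E_x$ carrying little invariant mass. Ruling out an immediate exit after a stopping time therefore genuinely requires \textbf{H2} (re-equilibration within the class on the negligible scale $\alpha_n$ before exit becomes likely); this is exactly where the cited proof does real work, and as stated your tightness step is filled only by appealing again to the reference.
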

\begin{proof}
This is \cite[Proposition~6.1]{Landim2016}. Remark that in \cite{Landim2016} it is also assumed that the limit chain is not constant, i.e., $\sum_{x,y\in\cco 1,\mathfrak{n}\ccf} r^{\mathcal E}(x,y) \neq 0$. Nevertheless this is not used in the proof of \cite[Proposition~6.1]{Landim2016}, which relies on results from \cite{Landim2010} in which this additional assumption is not made.
\end{proof}

Remark that in the particular case  $\mathfrak{n}=1$ the result is trivial as condition \textbf{H1} is empty and $ \Psi^{\mathcal E}(\sigma_n^{\mathcal E}(t))=1$ for all $n\in\N$ and $t\geqslant 0$.

Now, consider the following additional conditions on the partition $\mathcal E_1,\dots,\mathcal E_{\mathfrak{n}},\Delta$ and the timescale $\beta$:
\begin{description}
\item[\textbf{H3.}] For all $i\in\cco 1,N\ccf$ and $t>0$,
\[\mathbb E_i \po \int_0^t \1_{\sigma_n(s\beta_n) \in \Delta} \dd s \pf \ \underset{n\rightarrow +\infty}\longrightarrow \ 0\,.\]
\item[\textbf{H4.}] For all $x\in\cco 1,\mathfrak{n}\ccf$ and $i\in\mathcal E_x$, 
\[\lim_{\delta\rightarrow 0} \limsup_{n\rightarrow +\infty} \sup_{s\in [2\delta,3\delta]} \mathbb P_i \po \sigma_n(s\beta_n) \in \Delta\pf = 0\,.\]
\end{description}

\begin{prop}\label{prop:Landim2}
Consider a partition $\mathcal E_1,\dots,\mathcal E_{\mathfrak{n}},\Delta$ of $\cco 1,N\ccf$ and two positive sequences $(\alpha_n)_{n\in\N}$, $(\beta_n)_{n\in\N}$ such that $\alpha_n \ll \beta_n$ and $\mathbf{H1}$--$\mathbf{H3}$ hold. Fix $x\in \cco 1,\mathfrak{n}\ccf$ and an initial condition $i\in\mathcal E_x$. Let $X$ be as in  Proposition~\ref{prop:LandimTraceProcess}. Then
\begin{enumerate}
\item   For all $t\geqslant 0$ and $y\in \cco 1,\mathfrak{n}\ccf$, 
\[\int_0^t \1_{\Psi^{\mathcal E}(\sigma_n(s\beta_n))=y} \dd s \ 
\underset{n\rightarrow +\infty}\longrightarrow\ \int_0^t \1_{X(s)=y}\dd s\qquad\mbox{in law}.\]
\item Let $\tau_n = \inf\{t\geqslant 0\mid \sigma_n(t) \notin \mathcal E_x \cup \Delta \}$. Then $\tau_n/\beta_n$ converges in law as $n\rightarrow +\infty$ to an exponential variable with parameter $r^{\mathcal E}(x):=\sum_{y\in\cco 1,\mathfrak n\ccf\setminus\{x\}}r^{\mathcal E}(x,y)$ (to be understood as $\tau_n/\beta_n\rightarrow +\infty$ in probability in the case where $r^{\mathcal E}(x) = 0$).
\item If, moreover, $\mathbf{H4}$ holds, then for all $t_1,\dots,t_k\geqslant 0$,  $\po \Psi^{\mathcal E}(\sigma_n(t_j\beta_n) )\pf_{j\in \cco 1,k\ccf }$ converges in law toward $\po X(t_j)\pf_{j\in \cco 1,k\ccf }$  as $n\rightarrow +\infty$.
\end{enumerate}
\end{prop}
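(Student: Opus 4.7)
The plan is to reduce everything to the Skorokhod convergence of the trace process $(\Psi^{\mathcal E}(\sigma_n^{\mathcal E}(t\beta_n)))_{t\geqslant 0}$ to the limit chain $X$ supplied by Proposition~\ref{prop:LandimTraceProcess}, exploiting the time-change identity $\sigma_n(t)=\sigma_n^{\mathcal E}(T_n^{\mathcal E}(t))$ valid whenever $\sigma_n(t)\in\mathcal E$. The key quantity controlling the discrepancy between real and trace time is the sojourn $D_n(t):=\int_0^t\1_{\sigma_n(s)\in\Delta}\dd s=t-T_n^{\mathcal E}(t)$, which by assumption \textbf{H3} satisfies $D_n(t\beta_n)/\beta_n\to 0$ in $L^1$ for every $t>0$.

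For item~(i), I would perform successively the changes of variable $u=s\beta_n$ and $v=T_n^{\mathcal E}(u)$ to rewrite
\[\int_0^t \1_{\Psi^{\mathcal E}(\sigma_n(s\beta_n))=y}\dd s \ = \ \int_0^{T_n^{\mathcal E}(t\beta_n)/\beta_n}\1_{\Psi^{\mathcal E}(\sigma_n^{\mathcal E}(w\beta_n))=y}\dd w.\]
Since $T_n^{\mathcal E}(t\beta_n)/\beta_n\to t$ in probability by \textbf{H3}, and since the Skorokhod convergence of $\Psi^{\mathcal E}(\sigma_n^{\mathcal E}(\cdot\beta_n))$ to $X$ implies that $w\mapsto\int_0^w\1_{\Psi^{\mathcal E}(\sigma_n^{\mathcal E}(u\beta_n))=y}\dd u$ converges in law, uniformly on compacts, to the continuous process $w\mapsto\int_0^w\1_{X(u)=y}\dd u$ (by continuous mapping for the Skorokhod topology applied to the integral functional, whose limit has no fixed discontinuities), a Slutsky-type argument closes item~(i).

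For item~(ii), set $\tilde\tau_n=T_n^{\mathcal E}(\tau_n)$, the escape time in trace time of $\sigma_n^{\mathcal E}$ from $\mathcal E_x$, which satisfies $\tau_n=\tilde\tau_n+D_n(\tau_n)$. The continuous mapping theorem applied to the first-hitting-time functional transforms the Skorokhod convergence of Proposition~\ref{prop:LandimTraceProcess} into the convergence in law of $\tilde\tau_n/\beta_n$ to the escape time of $X$ from $x$, which is $\mathrm{Exp}(r^{\mathcal E}(x))$ (understood as $+\infty$ when $r^{\mathcal E}(x)=0$). The inclusion $\{\tau_n>M\beta_n\}\subset\{D_n(M\beta_n)>\beta_n\}\cup\{\tilde\tau_n\geqslant(M-1)\beta_n\}$ together with \textbf{H3} then yields tightness of $\tau_n/\beta_n$, and splitting $\{D_n(\tau_n)>\varepsilon\beta_n\}$ along $\{\tau_n\leqslant M\beta_n\}$ while invoking \textbf{H3} once more gives $D_n(\tau_n)/\beta_n\to 0$ in probability, so that $\tau_n/\beta_n$ shares the same limit as $\tilde\tau_n/\beta_n$.

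For item~(iii), the main obstacle is that item~(i) only yields $L^1$-in-time convergence of the marginal probabilities, whereas here pointwise convergence is needed at each fixed $t_j>0$; the role of assumption \textbf{H4} is precisely to provide the extra equicontinuity allowing to upgrade the soft-topology convergence to finite-dimensional distributional convergence, in line with \cite[Proposition~2.1]{LandimSoftFD}. Concretely, I would compare $\mathbb P_i(\Psi^{\mathcal E}(\sigma_n(t\beta_n))=y)$ with its time-average $\frac{1}{\delta}\int_t^{t+\delta}\mathbb P_i(\Psi^{\mathcal E}(\sigma_n(s\beta_n))=y)\dd s$; the average converges by item~(i) and bounded convergence, while the error is controlled, through the Markov property at time $t\beta_n$, by quantities of the form $\sup_{j\in\mathcal E}\sup_{s\in[0,\delta]}\mathbb P_j(\sigma_n(s\beta_n)\in\Delta)$. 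Obtaining such a uniform bound demands an iterative Markov-property argument starting from arbitrary states in $\mathcal E$, together with a uniform-in-$\mathcal E$ version of \textbf{H4} that propagates through the induction without cumulative loss; this is the delicate step borrowed from \cite{LandimSoftFD}. Joint convergence at $t_1<t_2<\dotsb<t_k$ would then follow from iterated conditioning via the Markov property at each $t_j\beta_n$, reducing it to products of one-step transitions for $X$.
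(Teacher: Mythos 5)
Your proposal is correct and follows essentially the same route as the paper: reduce to the Skorokhod convergence of the trace process from Proposition~\ref{prop:LandimTraceProcess}, control the time spent in $\Delta$ via \textbf{H3} to pass from trace time to real time for items~(i) and~(ii), and invoke the \textbf{H4}-based upgrade of \cite[Proposition~2.1]{LandimSoftFD} for the finite-dimensional marginals in item~(iii). The only cosmetic differences are that the paper compares the indicators of $\sigma_n$ and $\sigma_n^{\mathcal E}$ directly rather than via your change of variables, sandwiches the distribution function of $\tau_n/\beta_n$ between that of $\tau_n^{\mathcal E}/\beta_n$ at shifted levels instead of writing $\tau_n=\tilde\tau_n+D_n(\tau_n)$, and simply cites the reference for item~(iii) where you sketch its proof.
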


\begin{proof}
For the first point, we know by Proposition~\ref{prop:LandimTraceProcess} that, for all $t\geqslant 0$, $\int_0^t \1_{\Psi^{\mathcal E}(\sigma_n^{\mathcal E}(s\beta_n))=y} \allowbreak \dd s$ converges in law toward $\int_0^t \1_{X(s)=y}\dd s$ as $n\rightarrow +\infty$. Moreover,
\[\mathbb E \po  \int_0^t \left| \1_{\Psi^{\mathcal E}(\sigma_n(s\beta_n))=y}- \1_{\Psi^{\mathcal E}(\sigma_n^{\mathcal E}(s\beta_n))=y}\right| \dd s \pf \ \leqslant \ 2 \mathbb E \po  \int_0^t  \1_{ \sigma_n(s\beta_n)\in\Delta } \dd s \pf \,,\]
which vanishes as $n\rightarrow +\infty$ according to \textbf{H3}.

The proof of the second point is similar. Indeed, denoting $\tau_n^{\mathcal E} = \inf\{t\geqslant 0\mid \sigma_n^{\mathcal E}(t) \notin \mathcal E_x  \}$, the convergence in law of $\Psi^{\mathcal E}(\sigma_n^{\mathcal E}(\cdot \beta_n))$ toward $X$ in the Skorokhod topology implies the convergence in law of $\tau_n^{\mathcal E} /\beta_n$ toward the first jump time of $X$. Moreover, necessarily, $\tau_n^{\mathcal E} \leqslant \tau_n$ (since some time is removed in the trace process), and for all $M,\varepsilon>0$, 
\begin{eqnarray*}
\mathbb P_x \po \tau_n^{\mathcal E} /\beta_n \geqslant M \pf & \leqslant & \mathbb P_x \po \tau_n /\beta_n \geqslant M \pf \\
& \leqslant  & \mathbb P_x \po \tau_n^{\mathcal E} /\beta_n \geqslant M  - \varepsilon \pf + \mathbb P_x\po \sup_{s \in [0,M\beta_n]}|s-T_n^{\mathcal E}(s)| \geqslant \varepsilon  \beta_n\pf\\
& \leqslant  & \mathbb P_x \po \tau_n^{\mathcal E} /\beta_n \geqslant M  - \varepsilon \pf + \mathbb P_x\po \int_0^{M\beta_n} \1_{\sigma_n(s) \in \Delta}\dd s  \geqslant \varepsilon \beta_n \pf\,.
\end{eqnarray*} 
The last term vanishes as $n\rightarrow +\infty$ thanks to \textbf{H3}. Noticing that $\varepsilon$ is arbitrary, we conclude by using the convergence of $\tau_n^{\mathcal E} /\beta_n $.

Finally, the convergence of the time marginals of $\Psi^{\mathcal E}(\sigma_n^{\mathcal E}(\cdot \theta_n) )$ is established in \cite[Proposition~2.1]{LandimSoftFD} under \textbf{H3}, \textbf{H4}, and the convergence in the Skorokhod topology for the trace process  proven in Proposition~\ref{prop:LandimTraceProcess}.
\end{proof}

\begin{proof}[Proof of Theorem~\ref{Thm:Landim}]
Let us recall the construction of \cite{Landim2016} (with our slightly different notations). The first timescale $\theta^1$ is defined by $\theta^1_n = \left(\sum_{i,j=1}^N \lambda_n(i,j)\right)^{-1}$, and the first partition is simply $\mathcal E_x^1 = \{x\}$ for $x\in \cco 1,N\ccf$, hence $\mathfrak{n}_1=N$, and $\Delta^1 = \emptyset$. As seen in \cite[Section~3]{Landim2016}, condition \eqref{Eq:CondOrdered} ensures that $\lambda_n(i,j)\theta_n^1$   admits a 
nonnegative limit $r_1(i,j)$ for all $i,j\in\cco 1,N\ccf$, with at least one pair $(i,j)$ such that $r_1(i,j)\neq 0$. Notice that there is no transition at a timescale smaller than $\theta^1$ since, for all $t\geqslant 0$, the probability that there has been a jump before time $t$ is less than $e^{-t/\theta^1}$. Let $X^1$ be the Markov chain in $\cco 1,N\ccf$ with transition rates $r_1(i,j)$. Since the jump rates of $\sigma_n(\cdot \theta_n^1)$ converge toward those of $X^1$, as in the proof of Proposition~\ref{Prop:CV-mu-borne} we can consider a synchronous coupling for which 
\begin{equation}\label{eq:VCX1unif}
\mathbb P \po X^1(t) = \sigma_n(t\theta_n^1)\ \forall t\in[0,T]\pf \underset{n\rightarrow +\infty}\longrightarrow \ 1
\end{equation}
for all $T>0$. This implies the convergence in law in the Skorokhod space, hence point \ref{lan2_CVMarkov} for $j=1$.

Denote by $\mathcal C_1^1,\dots,\mathcal C_{\mathfrak{n}_{2}}^1$ the recurrence classes of $X^1$ and, for all $x\in \cco 1,\mathfrak{n}_{2}\ccf$, set $\mathcal E_x^{2} =\mathcal C_x^1$. Denote by $\mathcal T_1$ the set of transient points of $X^1$ and set $\Delta^{2} =\mathcal T_1$. Since $X^1$ is not a constant chain, necessarily $\mathfrak{n}_2<\mathfrak{n}_1$.

Then, the timescales and the partitions are defined by induction. Suppose that $\theta^{j-1}$, $\mathfrak{n}_{j}$, $\mathcal E_1^{j},\dots,\mathcal E_{\mathfrak{n}_j}^j$  (with $\mathcal E_x^j \neq \emptyset$ for all $x\in\cco 1,\mathfrak{n}_j\ccf$), and $\Delta^j$ have been defined for some $j\geqslant 2$. If $\mathfrak n_j = 1$, we stop the construction (and set $\mathfrak{p}=j-1$). Otherwise, denoting $\mathcal E_{\neq x} = \bigcup_{y\neq x} \mathcal E_y$ for $x\in\cco 1,\mathfrak{n}_j\ccf $, we set
\[\theta^j_n \ = \ \po \sum_{x=1}^{\mathfrak n_j} \frac{\mathrm{cap}(\mathcal E_x,\mathcal E_{\neq x})}{\mu_n(\mathcal E_x)} \pf^{-1}\,,\]
which is well defined since the chain is strongly connected, so that the capacities are nonzero. By \cite[Theorems~2.7 and 2.12]{Landim2016}, for all $j\in\cco 2,\mathfrak{p}\ccf$, the partition $\mathcal E_1^j,\dots,\mathcal{ E}_{\mathfrak{n}_{j}}^j,\Delta^j$ and the timescales $\alpha_n = \theta_n^{j-1}$ and $\beta_n=\theta_n^j$ satisfy \textbf{H1}--\textbf{H3} (with, in \textbf{H1}, $r^{\mathcal E}(x,y)\neq 0$ for at least one pair $(x,y)$) and $\theta^j \ll \theta^{j+1}$. Let $X^j$ be the Markov chain on $\cco 1,\mathfrak{n}_j\ccf$ given by  Proposition~\ref{prop:LandimTraceProcess} for this partition and these timescales. Denote by $\mathcal C_1^j,\dots,\mathcal C_{\mathfrak{n}_{j+1}}^j$  its recurrence classes and $\mathcal T_j$ its transient points. For all $x\in \cco 1,\mathfrak{n}_{j+1}\ccf$, set $\mathcal E_x^{j+1} = \bigcup_{y\in \mathcal C_x^j} \mathcal E_y^j$ and $\Delta^{j+1} = \Delta^j \cup (\bigcup_{y\in\mathcal T_j} \mathcal E_y^j)$. The chain $X^j$ being nonconstant, $\mathfrak{n}_{j+1}<\mathfrak{n}_j$. This shows that this inductive definition of timescales and partitions ends in a finite number of steps.

Points \ref{lan1_timescale} and \ref{lan5coarser} of Theorem~\ref{Thm:Landim} are satisfied by construction. Point  \ref{lan6escape_expo} is  a consequence of Proposition~\ref{prop:Landim2}.

Let us prove that \textbf{H4} holds for this choice of partitions and timescales. More precisely, let us prove by induction on $j\in\cco 1,\mathfrak{p}\ccf$ that  
\begin{equation}\label{eq:induction_delta}
\forall \delta>0,\ \forall i\in\cco 1,N\ccf,\qquad  \limsup_{n\rightarrow +\infty} \sup_{s\geqslant \delta } \mathbb P_i \po \sigma_n(s\theta_n^j) \in \Delta^{j}\pf = 0\,,
\end{equation}
which is stronger than \textbf{H4}. For $j=1$, $\Delta^1 = \emptyset$, so there is nothing to prove. For $j=2$, let $\varepsilon>0$ and  $M>0$ be such that
\[\sup_{t\geqslant M} \max_{i\in\cco 1,N\ccf}\mathbb P_i\po X^1(t) \in \mathcal T_1\pf \ \leqslant \ \varepsilon.\]
Thanks to \eqref{eq:VCX1unif},  we can consider $n_0\in\N$  such that for all $n\geqslant n_0$
\[  \max_{i\in\cco 1,N\ccf}\mathbb P_i\po \sigma_n(M\theta^1_n) \in \Delta^2\pf \ \leqslant \ 2\varepsilon\,.\]
For $\delta>0$, for all $n\geqslant n_0$ large enough so that $\delta\theta^2_n > M\theta^1_n$, by the Markov property, for all $s\geqslant \delta$ and $i\in\cco 1,N\ccf$,
\[ \mathbb P_i \po \sigma_n(s\theta_n^j) \in \Delta^{2}\pf \ = \  \sum_{k=1}^N \mathbb P_i \po \sigma_n(s\theta_n^j-M\theta^1_n) =k\pf  \mathbb P_k \po \sigma_n(M\theta^1_n) \in \Delta^{2}\pf \ \leqslant \ 2\varepsilon\,.\]
Hence
\[\limsup_{n\rightarrow +\infty} \sup_{s\geqslant \delta } \mathbb P_i \po \sigma_n(s\theta_n^2) \in\Delta^2\pf \ \leqslant \ 2\varepsilon\]
with an arbitrary $\varepsilon>0$, which concludes the proof of \eqref{eq:induction_delta} for $j=2$. 

Now, suppose by induction that \eqref{eq:induction_delta} holds for all $k\in\cco 1,j\ccf$ for some $j\in \cco 1,\mathfrak{p}-1\ccf$. This implies $\textbf{H4}$ with $\beta_n=\theta^j_n$ and the partition $\mathcal E^j_1,\dots,\mathcal E^j_{\mathfrak{n}_j},\Delta^j$. By Proposition~\ref{prop:Landim2}, we get the convergence of the time marginals of $\Psi^j(\sigma_n(\cdot \theta_n^j))$ toward those of $X^j$. In particular, as in the previous case, for any $\varepsilon>0$ we can choose $M,n_0>0$ large enough so that for all $n\geqslant n_0$,
\[  \max_{i\in\cco 1,N\ccf}\mathbb P_i\po \sigma_n(M\theta^j_n) \in \Delta^{j+1}\setminus \Delta^j\pf \ \leqslant \ 2\varepsilon\,.\]
Since, by induction, we also know that $\mathbb P_i( \sigma_n(M\theta^j_n) \in   \Delta^j)$ vanishes as $n\rightarrow +\infty$, we get that
\[  \max_{i\in\cco 1,N\ccf}\mathbb P_i\po \sigma_n(M\theta^j_n) \in \Delta^{j+1}\pf \ \leqslant \ 3\varepsilon\]
for $n$ large enough. The conclusion follows again from the Markov property, as in the case  $j=2$.

We have thus established that \textbf{H4} holds with $\beta_n=\theta^j_n$ and the partition $\mathcal E_1^j,\dots,\mathcal E_{\mathfrak{n}_j}^j$ for all $j\in\cco 1,\mathfrak{p}\ccf$. All remaining points of Theorem~\ref{Thm:Landim} then follow from Proposition~\ref{prop:Landim2} and the fact that, when applying Proposition~\ref{prop:Landim2},  a sequence $\alpha$ with $\theta^{j-1}_n \ll \alpha_n \ll \theta_n^j$,  can replace   $\theta^{j-1} $ in  condition \textbf{H2} or, alternatively, can replace $\theta^j$ in the conditions \textbf{H1}, \textbf{H3}, and \textbf{H4} (in which case, in \textbf{H1}, the limit rates $r^{\mathcal E}$ are all zero, so that the limit chain is constant). Indeed, the proof that \textbf{H3} holds in this case is given in \cite[Lemma~7.2]{Landim2016}, which only requires that \textbf{H2} holds for $\theta^{j-1}$ and that $\alpha\gg \theta^{j-1}$. Similarly, our proof of \textbf{H4} is unchanged if, at some step of the inductive construction of the timescales and the partitions, we  replace  $\theta^j$ by some $\alpha$ with $\theta^{j-1}\ll \alpha \ll \theta^{j}$ (the only difference would be that the limit chain is constant, and thus we could not conclude that $\mathfrak{n}_{j}<\mathfrak{n}_{j-1}$ to ensure that the construction ends in a finite number of steps).
\end{proof}

\section{Technical results} \label{app:B}

\subsection{Proof of Proposition~\ref{prop:PierreP1}}

The proof of Proposition~\ref{prop:PierreP1} relies on the following linear algebra result.

\begin{lemm}\label{lem:foncth}
Let $M$ be in $\mathcal M_d(\R)$ and denote by $\xi_1>\xi_2>\dots>\xi_k$ the distinct real parts of the eigenvalues of $M$. For $j\in\llbracket 1,k\rrbracket$, denote by $E_j$ the space spanned by the generalized eigenvectors of $M$ corresponding to eigenvalues with real part $\xi_j$. Then, up to a linear change of coordinates, there exists a function $h\in C^\infty(S^{d-1},(0,+\infty))$ such that 
\begin{enumerate}
\item\label{510b} $\nabla h(\theta)=0$ for $\theta\in S^{d-1}\cap (\bigcup_{i=1}^k E_i)$;
\item\label{510c} $h$ attains its minimum  at $\theta$ if and only if $\theta\in E_1\cap S^{d-1}$ and its maximum  at $\theta$ if and only if $\theta\in E_k\cap S^{d-1}$;
\item\label{510d} $\nabla h(\theta) \cdot F_M(\theta) < 0$ for every  $\theta\in S^{d-1}\setminus \bigcup_{i=1}^k E_i$, where we recall that $F_M(\theta) = M \theta - (\theta \cdot M\theta)\theta$ for $\theta\in S^{d-1}$;
\item \label{510e}
$v \cdot \nabla^2 h(\theta)v\leqslant -|\mathrm{pr}_{i-1}(v)|^2$ for $i=1,\dots,k$, $\theta\in S^{d-1}\cap E_i$, and $v\in T_\theta S^{d-1}$, where $\nabla^2 h$ denotes the Hessian of $h$ and, for $i = 2, \dotsc, k$, $\mathrm{pr}_{i-1}$ denotes the projection onto $E_1\oplus \dotsb \oplus E_{i-1}$ along $E_i \oplus \dotsb \oplus E_k$ and $\mathrm{pr}_{0}=0$.
\end{enumerate}
\end{lemm}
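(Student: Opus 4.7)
Here is my plan.

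The proof builds $h$ as a suitably shaped function of the projections $\mathrm{pr}_i(\theta)$ onto the generalized real-part eigenspaces. First, I would simplify the algebra: by a linear change of coordinates I can assume that the decomposition $\mathbb R^d = E_1 \oplus \cdots \oplus E_k$ is orthogonal and that within each block $M|_{E_i} = \xi_i \id + S_i + N_i$, where $S_i$ is skew-symmetric (coming from the imaginary parts of the eigenvalues) and the nilpotent part $N_i$ has arbitrarily small norm, thanks to the standard rescaling $\mathrm{diag}(1, \varepsilon, \varepsilon^2, \dots)$ of generalized-eigenvector bases. With this normalization, writing $u_i(\theta) = \abs{\mathrm{pr}_i(\theta)}^2$, one has $\theta\cdot M\theta = \sum_i \xi_i u_i + O(\varepsilon)$ and, more generally, an exact decoupling of the quadratic forms up to $O(\varepsilon)$ corrections that will be harmless for strict inequalities.

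A first natural ansatz is the linear-in-$u$ candidate $h_0(\theta) = \sum_{i=1}^k c_i u_i(\theta)$ with $c_1 < c_2 < \dots < c_k$. A direct computation on the sphere shows that $\nabla h_0 \cdot F_M$ equals (up to a harmless positive factor and the $O(\varepsilon)$ skew/nilpotent corrections) the covariance of the sequences $(c_i)$ and $(\xi_i)$ with respect to the probability vector $(u_i)$ on $\llbracket 1,k\rrbracket$; by Chebyshev's sum inequality this covariance is $\leqslant 0$ and vanishes iff $(u_i)$ is a Dirac mass, i.e., $\theta \in \bigcup_i E_i$. This ansatz would immediately yield \ref{510b}, \ref{510c} and \ref{510d}. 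However \ref{510e} fails for $h_0$: the spherical Hessian at $\theta \in E_i\cap S^{d-1}$ computes to $\mathrm{Hess}^{S^{d-1}} h_0(\theta)(v,v) = 2\sum_{j}(c_j - c_i)\abs{\mathrm{pr}_j(v)}^2$, which produces \emph{positive} contributions in the directions toward $E_j$ with $j > i$, violating the required non-positive bound (most blatantly at $i=1$, which forces the Hessian to be $\leqslant 0$ at the minimum, whereas $h_0$ has a nondegenerate minimum there).

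To cure \ref{510e} I would replace the linear combination by a nested superlinear one, namely
\[
h(\theta) = \sum_{j=2}^{k} a_j \Bigl(\sum_{i \geqslant j} u_i(\theta)\Bigr)^{p_j},
\]
with weights $a_j > 0$ and exponents $p_j > 1$ to be tuned. The point of taking $p_j > 1$ is that, when expanding $h(\gamma(t))$ along $\gamma(t) = (\theta+tv)/\abs{\theta+tv}$ with $\theta \in E_i \cap S^{d-1}$, the terms indexed by $j > i$ contribute only at order $t^{2p_j} = o(t^2)$ and thus drop out of the spherical Hessian, while each term with $j \leqslant i$ contributes a negative semidefinite quadratic form that is strictly negative exactly along the $E_1 \oplus \cdots \oplus E_{i-1}$ directions. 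Choosing the $a_j$ large enough relative to the $p_j$ then yields the quantitative bound $-\abs{\mathrm{pr}_{i-1}(v)}^2$. The same construction gives \ref{510b}--\ref{510c} without effort (nested positive exponents $p_j > 1$ make $h$ vanish to higher order on $E_1\cap S^{d-1}$, and $h(\theta) = \sum_{j \leqslant i} a_j$ is strictly increasing in $i$ at the vertices $E_i\cap S^{d-1}$), and \ref{510d} is preserved because the nonlinear correction is a monotone function of the same level variables and its sign along $F_M$ is again governed by a Chebyshev-type covariance inequality, strict outside $\bigcup_i E_i$.

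The main obstacle is therefore \ref{510e} combined with \ref{510c}: \ref{510e} at $i=1$ forces the minimum on $E_1\cap S^{d-1}$ to be degenerate at second order, yet the minimum must still be strict in the set-theoretic sense of \ref{510c}. Both requirements are reconciled precisely by letting $h$ vanish like a higher power of the distance to $E_1\cap S^{d-1}$, which is what the exponents $p_j > 1$ achieve. The remaining bookkeeping is verifying quantitatively that the $O(\varepsilon)$ skew/nilpotent corrections from the initial normalization do not spoil the strict inequality \ref{510d} nor the Hessian bound \ref{510e}; this is straightforward since both conditions are strict away from the critical submanifolds, so one can absorb $O(\varepsilon)$ terms by taking $\varepsilon$ small enough after the coefficients $a_j, p_j$ have been fixed.
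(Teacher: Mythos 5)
Your proposal is correct and essentially coincides with the paper's proof: after the same near-conformal normalization of each block of $M$, the paper takes $h=\sum_{j=1}^{k-1}\frac12\bigl(1+t_j^2\bigr)$ with $t_j=\sum_{i>j}|\theta_i|^2$, which is exactly your ansatz with $a_j=\frac12$ and $p_j=2$ up to an additive constant, and the verifications of \ref{510b}--\ref{510e} proceed by the same computations (your Chebyshev covariance being the paper's double-sum identity). The only points to tidy are to take the $p_j$ to be integers so that $h$ is genuinely $C^\infty$, and to note that the $O(\varepsilon)$ corrections are harmless everywhere (not merely away from the critical sets) because they enter as perturbations of the strictly positive gaps $\xi_\ell-\xi_i$.
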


\begin{proof}
Denote by $n_j$ the dimension of $E_j$ for $j\in \llbracket 1,k\rrbracket$.
Let us write a vector $x\in \mathbb{R}^d$ as $x=(x_1,\dots,x_k)$ with $x_j\in \mathbb{R}^{n_j}$ identified up to a linear system of coordinates in $E_j$ to be fixed later.

By construction, $E_j=\{x\mid x_i=0\mbox{ for all }i\ne j\}$ and 
\[ M=\mathrm{diag}(M_1,\dots,M_k),\]
where each $M_{j}$ is a $n_{j}\times n_{j}$ square matrix  whose eigenvalues have all real part equal to $\xi_j$. Notice that, up to applying a further change of coordinates in each space $E_j$, we can assume that $M_j^T+M_j$ is arbitrarily close to the $n_j\times n_j$ identity matrix multiplied by $2 \xi_j$. Hence, given $\epsilon>0$, for every $x_j\in \mathbb{R}^{n_j}$ we can assume that
\begin{equation}\label{eq:epsxj}
x_j^T M_j x_j=(\xi_j+\epsilon_{j,x_j})|x_j|^2,\qquad \epsilon_{j,x_j}\in (-\epsilon,\epsilon).
\end{equation}

Set $\phi(t)=t+t^2$ for $t\in [0,1]$ and let
\[h=h_1+\dots+h_{k-1}\]
with
\[h_j(\theta)=\frac12(|\theta_1|^2+\dots+|\theta_j|^2+\phi(|\theta_{j+1}|^2+\dots+|\theta_k|^2)).\] 

Notice that $[0,1]\ni t\mapsto 1-t+\phi(t)=1+t^2$ takes its minimal value at $t=0$ and its maximal value at $t=1$. Hence, $h_j$ attains its minimum (respectively, maximum) at $\theta\in S^{d-1}$ if and only if $|\theta_{j+1}|=\dots=|\theta_k|=0$ (respectively, $|\theta_1|=\dots=|\theta_{j}|=0$). In particular, \ref{510b} and \ref{510c} are satisfied.

Let us now turn to the proof of \ref{510d}. For $\theta\in S^{d-1}$ and $j\in \llbracket1,k-1\rrbracket$, denote $t_j=|\theta_{j+1}|^2+\dots+|\theta_k|^2$.
The derivative of $h_j$ at $\theta$ in the direction  $F_M(\theta)$
is equal to 
\begin{align*}
\nabla h_j(\theta) \cdot (M\theta- (\theta \cdot M\theta) \theta)={}&\sum_{\ell=1}^j (\theta_\ell^T M_\ell \theta_\ell- |\theta_\ell|^2 \sum_{i=1}^k  \theta_i^T M_i \theta_i)\\
&+\phi'(t_j)\sum_{\ell=j+1}^k (\theta_\ell^T M_\ell \theta_\ell- |\theta_\ell|^2 \sum_{i=1}^k  \theta_i^T M_i \theta_i).
\end{align*}
For  every $\ell\in \llbracket 1,N\rrbracket$, let us rewrite the term $\theta_\ell^T M_\ell \theta_\ell- |\theta_\ell|^2 \sum_{i=1}^k  \theta_i^T M_i \theta_i$ using \eqref{eq:epsxj} as follows:
\begin{align*}
\theta_\ell^T M_\ell \theta_\ell- |\theta_\ell|^2 \sum_{i=1}^k  \theta_i^T M_i \theta_i&=
(\xi_\ell + \epsilon_{\ell,\theta_\ell})|\theta_\ell|^2- |\theta_\ell|^2 \sum_{i=1}^k (\xi_i+\epsilon_{i,\theta_i})|\theta_i|^2\\
&=(\xi_\ell + \epsilon_{\ell,\theta_\ell})|\theta_\ell|^2- |\theta_\ell|^2\bigl(\xi_\ell + \epsilon_{\ell,\theta_\ell}\\
& \hphantom{ = } +\sum_{i\ne \ell}(\xi_i - \xi_\ell + \epsilon_{i,\theta_i}-\epsilon_{\ell,\theta_\ell})|\theta_i|^2\bigr)\\
&=- |\theta_\ell|^2\sum_{i\ne \ell}(\xi_i - \xi_\ell + \epsilon_{i,\theta_i}-\epsilon_{\ell,\theta_\ell})|\theta_i|^2,
\end{align*}
where the middle equality uses $\sum_{i=1}^k|\theta_i|^2=1$. Hence,
\begin{align*}
\nabla h_j(\theta) \cdot (M\theta - (\theta \cdot M\theta) \theta)={}&
-\sum_{\ell=1}^j \sum_{i\ne \ell}(\xi_i-\xi_\ell + \epsilon_{i,\theta_i}-\epsilon_{\ell,\theta_\ell})|\theta_i|^2|\theta_\ell|^2\\
&-\phi'(t_j)\sum_{\ell=j+1}^k\sum_{i\ne \ell}(\xi_i-\xi_\ell+\epsilon_{i,\theta_i}-\epsilon_{\ell,\theta_\ell})|\theta_i|^2|\theta_\ell|^2\\
={}&\sum_{\ell=1}^j\sum_{i=j+1}^k (1-\phi'(t_j)) (\xi_\ell-\xi_i + \epsilon_{\ell,\theta_\ell}-\epsilon_{i,\theta_i})|\theta_i|^2 |\theta_\ell|^2.
\end{align*}
We have $1-\phi'(t_j)=-2t_j\leqslant 0$ for every $t_j\in [0,1]$ 
with equality holding only for $t_j=0$.
Moreover,  $\xi_\ell - \xi_i + \epsilon_{\ell,\theta_\ell}-\epsilon_{i,\theta_i}>0$ for $\epsilon$ small enough, since $\ell<i$. 
Hence $\nabla h_j(\theta) \cdot (M\theta-(\theta \cdot M\theta)\theta)<0$ if $t_j>0$ and if there exist $\ell\in \llbracket 1,j\rrbracket$ and $i\in \llbracket j+1,k\rrbracket$ such that $|\theta_i| |\theta_\ell|\ne 0$, that is, if 
$\theta\not\in V_j\cup W_j$, where
\[V_j=E_1\oplus \dots\oplus  E_j, \qquad W_j=E_{j+1}\oplus \dots\oplus  E_k.\]
This proves \ref{510d}, since if $\theta\in S^{d-1}\setminus\bigcup_{i=1}^k E_i$, then there exists $j$ such that  $\theta\not\in V_j\cup W_j$.

We are left to prove \ref{510e}. If $\theta$ is in $S^{d-1}\cap E_i$, then it is in $V_j$ for $j\geqslant i$ and in $W_j$ for $j<i$. The proof works by computing $\nabla^2h_j$ both on $S^{d-1}\cap V_j$ and $S^{d-1}\cap W_j$ for $j\in\llbracket 1,k\rrbracket$. 

In order to compute $\nabla^2h_j$, we first extend $h_j$ to a function $H_j:x\mapsto \frac12(|x_1|^2+\dots+|x_j|^2+\phi(|x_{j+1}|^2+\dots+|x_k|^2))$ on $\R^d$. Then
\[\nabla h_j(\theta)=\nabla H_j(\theta)- (\theta \cdot \nabla H_j(\theta)) \theta\]
and
\[\nabla^2 h_j(\theta)v=
\nabla^2 H_j(\theta)v- (\theta \cdot \nabla^2 H_j(\theta)v) \theta - (v \cdot \nabla H_j(\theta)) \theta - (\theta \cdot \nabla H_j(\theta)) v,\]
for $v$ in  $T_\theta S^{d-1}$,
where the latter is identified with a linear subspace of $\R^d$.

A direct computation (using that $\phi'(0)=1$ and $\phi'(1)=3$) shows that if $\theta$ is in $S^{d-1}\cap V_j$ then $\nabla^2 h_j(\theta)=0$, while if $\theta$ is in $S^{d-1}\cap W_j$ then $\nabla^2 h_j(\theta)v=-2 \mathrm{pr}_j(v)$. Hence, for $\theta\in S^{d-1}\cap E_i$ and $v\in T_\theta S^{d-1}$,
\[
v \cdot \nabla^2 h(\theta)v=-2\sum_{j=1}^{i-1}
\abs*{\mathrm{pr}_j(v)}^2 \leqslant -2 \abs*{\mathrm{pr}_{i-1}(v)}^2. \qedhere
\]
\end{proof}

We can now proceed to the proof of Proposition~\ref{prop:PierreP1}.

\begin{proof}[Proof of Proposition~\ref{prop:PierreP1}]
Fix $\varepsilon, K$ as in the statement of the proposition and let $h$ be given by Lemma~\ref{lem:foncth}. To simplify the notations in this proof, for $i \in \llbracket 1, N\rrbracket$, we denote $F_{A_i}$ simply by $F_i$.

Notice that, for $j\in\cco 1,k\ccf$ and $\theta\in S^{d-1}\cap E_j$,  we have $\na^2 h(\theta) F_M(\theta)=0$. Indeed, if $\theta$ is a  generalized eigenvector of $M$ in $E_j$ then $F_M(\theta)\in E_j$ and the conclusion follows from Item \ref{510b} in Lemma~\ref{lem:foncth}. 
As a consequence, for $j\in\cco 1,k\ccf$, $\theta\in S^{d-1}\cap E_j$, and $i\in\cco1,N\ccf$,
\[F_i(\theta)\cdot\na \po (F_i-F_M)\cdot \na h \pf(\theta) = F_i(\theta)\cdot \na^2 h(\theta) F_i(\theta)\,, \]
where we again used Item~\ref{510b} in Lemma~\ref{lem:foncth}.

Let $\delta = \varepsilon \min_{i\in \llbracket 1,N\rrbracket} \pi_i/8$. Thanks to Lemma~\ref{lem:foncth} and Condition (C), up to multiplying $h$ by a positive constant, there exist
a map $s:\cco 2,k\ccf\to \cco 1,N\ccf$ and $K_1, \dotsc, K_k$ disjoint neighborhoods in $ S^{d-1}$  of, respectively,  $S^{d-1}\cap E_1,\dots,S^{d-1}\cap E_k$ such that $K_1 \subset K$ and 
\begin{eqnarray*}
\max_{j\in\cco2,k\ccf}\max_{\theta\in K_j} F_{s(j)}(\theta)\cdot\na \po (F_{s(j)}-F_M)\cdot \na h \pf(\theta)  &\leqslant & -1,\\
\max_{j\in\cco1,k\ccf}\max_{\theta\in K_j}\max_{i\in\cco1,N\ccf} F_{i}(\theta)\cdot\na \po (F_i-F_{M})\cdot \na h \pf(\theta)  &\leqslant & \delta.
\end{eqnarray*}

For all $j\in\cco2,k\ccf$ we consider $\psi_j\in\mathcal{C}^2({S}^{d-1})$ such that $\psi_j(\theta)=1$ for all $\theta\in S^{d-1}\cap K_j$ and $0$ for all $\theta\in S^{d-1}\cap K_{\ell}$ with $\ell\neq j$.

The generator of $(\theta(t),\sigma(t))_{t\geqslant0}$ being given by
\[L g(\theta,\sigma)\  =\ F_\sigma(\theta)\cdot \na_{\theta} g(\theta,\sigma) + \mu  \po \sum_{i=1}^N \pi_i g(\theta,i)  -g(\theta,\sigma)\pf \,,\]
we consider the Lyapunov function
\[f(\theta,\sigma) = h(\theta)+\frac1\mu \po  F_{\sigma}(\theta)-F_M(\theta)\pf \cdot\na h(\theta) - \frac{1}{4\mu^2}\sum_{j=2}^k \psi_j(\theta)\1_{s(j)}(\sigma)\]
to get
\begin{align*}
L f(\theta, \sigma) = {} & F_M(\theta) \cdot \na h(\theta) +\frac1\mu F_\sigma(\theta)\cdot\na \po (F_\sigma-F_M)\cdot \na h \pf(\theta) \\
& {} - \frac{1}{4\mu^2}\sum_{j=2}^k \left[F_\sigma(\theta)\cdot \na\psi_j(\theta) \1_{s(j)}(\sigma) + \mu \psi_j(\theta) \po \pi_{s(j)}- \1_{s(j)}(\sigma)\pf \right].
\end{align*}
We distinguish four cases. First, for all $\theta\in S^{d-1}\setminus(\bigcup_{j=1}^k K_j)$ and $\sigma\in\cco1,N\ccf$, we simply bound
\[L f(\theta,\sigma) \leqslant - \alpha + C\po\frac1\mu+\frac{1}{\mu^2}\pf\,,\]
where
\[\alpha = - \sup_{\theta\in S^{d-1}\setminus(\bigcup_{j=1}^k K_j)}F_M(\theta)\cdot \na h(\theta) >0\]
and $C>0$ is some constant independent of $\mu$. Second, for all $\theta\in K_1$ and $\sigma\in\cco1,N\ccf$, 
\[L f(\theta,\sigma) \leqslant \frac1\mu F_{\sigma}(\theta) \cdot\na \po (F_{\sigma}-F_M)\cdot \na h \pf(\theta)  \leqslant \frac{ \delta}{\mu}\,.\]
Third, for all $j\in \cco 2,k\ccf$ and $\theta\in K_j$, 
\[
L f(\theta,s(j)) \leqslant \frac1\mu F_{s(j)}(\theta) \cdot\na \po (F_{s(j)}-F_M)\cdot \na h \pf(\theta) - \frac{1}{4\mu } \po \pi_{i}-1\pf 
\leqslant  -\frac{1}{4\mu}\,.\]
Fourth, for all $j\in \cco 2,k\ccf$, $\theta\in K_j$, and $\sigma\neq s(j)$,
\[
L f(\theta,\sigma) \leqslant \frac1\mu F_{\sigma}(\theta) \cdot\na \po (F_{\sigma}-F_M)\cdot \na h \pf(\theta) - \frac{1}{4\mu }  \pi_{s(j)} 
\leqslant   \frac{4\delta-\pi_{s(j)}}{4\mu} \leqslant -\frac{\min_{i\in \llbracket 1,N\rrbracket}\pi_i}{8\mu}\,.\]
Gathering these four cases, for $\mu$ large enough we get
\[Lf(\theta,\sigma) \leqslant \frac{\min_{i\in \llbracket 1,N\rrbracket}\pi_i}{8\mu} \po \varepsilon \1_{\theta\in K_1}- \1_{\theta \notin K_1} \pf \,.\]
Any invariant measure $\rho$ for $L$ thus satisfies
\[0  = \rho Lf \leqslant \frac{\min_{i\in \llbracket 1,N\rrbracket}\pi_i}{8\mu} \po \varepsilon\rho( K_1\times\cco 1,N\ccf) - 1+\rho(K_1\times\cco 1,N\ccf)\pf \,,\]
hence $\rho(K_1\times\cco 1,N\ccf) \geqslant 1/(1+\varepsilon) \geqslant 1 - \varepsilon$. This concludes since $K_1\subset K$.
\end{proof}

\subsection{Proof of Lemma~\ref{lemma:dim23}}

Assume by contradiction that there exist an open nonempty subset $B$ of $\co(A)$ and two positive integers $j\leqslant k$ in 
$ \{2,3\}$ so that for every matrix $M\in B$, using the notation of Definition~\ref{defi:cond-C}, for every $i \in   \llbracket 1,N\rrbracket$ there exists $\theta \in  E_j\cap S^{d-1}$ with $F_{A_i}(\theta) \in \bigoplus_{r\geqslant j} E_r$. Denote by $n_l$ the dimension of the generalized eigenspace $E_l$ for $l\in\llbracket1,k\rrbracket$. By eventually shrinking $B$, the integers $n_l$  do not depend on $M$ in $B$. To further simplify the discussion, we will assume that $k=d$, i.e., the $n_l$ are all equal to one, since, otherwise, up to replacing the eigenvectors by projectors on the spaces $E_l$ along the direct sum of the other $E_{l^\prime}$, the subsequent computations carry over.

Let us parameterize $B$ by an open neighborhood $S$ of $0$ in $\mathbb R^m$, where $m$ denotes the dimension of $\co(A)$. Note that one can choose the assignment $s\mapsto M(s)$ affine in $s$. It is standard that the assignments given by $s\mapsto (\lambda_l(M(s)))_{1\leqslant l\leqslant d}$ and $s\mapsto (v_l(M(s)))_{1\leqslant l\leqslant d}$ for the spectrum of $M(s)$ and a basis of its unit length eigenvectors define smooth functions on $S$. In the sequel we simply write $\lambda_l(s)$ and $v_l(s)$ and, to highlight the fact that the spaces $E_l$ depend on $M(s)$, we write them as $E_l(s)$. (Note that if $n_l > 1$ for some $l\leqslant k$, then the map assigning to every $s\in S$ the projector on $E_l(s)$ is again smooth.)

We use $D(s)$ and $V(s)$ to denote, respectively, the diagonal matrix made of the eigenvalues of $M(s)$ and the matrix in $\mathcal M_d(\mathbb R)$ with columns $v_l(s)$ and set $W(s)=\big(V(s)^T\big)^{-1}$, whose columns we denote by $w_l(s)$, $l \in \llbracket 1, d\rrbracket$. We summarize these notations with the relations
\begin{equation}\label{eq:DV-M}
M(s)V(s)=V(s)D(s),\quad W^T(s)V(s)=\id,
\quad 
\hbox{ for all }s\in S.
\end{equation}
Notice, moreover, that
\begin{equation}
\label{eq:vT-vprime}
v_{l}^T(s)v'_{l}(s)=0,\qquad \hbox{ for all }s\in S\hbox{ and } l \in   \llbracket 1,d\rrbracket,
\end{equation}
where by $'$ we denote the differentiation with respect to $s\in S$. (One can either interpret such a differentiation in tensorial sense in the computations below, or simply consider a directional derivative in the space $\R^m$ along an arbitrary direction.) Note also that $M^T(s)W(s)=W(s)D(s)$ for $s\in S$, i.e., the vectors $w_l(s)$ are eigenvectors of $M^T$. 

We only treat the case where $j=2$ since the remaining case for $j=k=d=3$ is even simpler. Then one can choose $\theta=v_2(s)$ in $E_2(s)\cap S^{d-1}$ and the above assumption on $F_{A_i}$ reads
\[F_{A_i}(v_2(s))\in \bigoplus_{r\geqslant 2}E_r(s),
\quad \hbox{for all }s\in S \text{ and }i\in\llbracket1,N\rrbracket.
\]
In turn the above equation reduces to 
\begin{equation}\label{eq:w1v2}
w_1^T(s)A_iv_2(s)= 0,\quad \hbox{for all }s\in S\hbox{ and }i \in   \llbracket 1,N\rrbracket.
\end{equation}
In the sequel, for simplicity, we drop the variable $s$ from the notations. Set $R = V^{-1} V^\prime$ and note that
\begin{equation}\label{eq:Rr}
v_l^\prime = \sum_{q=1}^d r_{ql} v_q \qquad \text{ for } l \in \llbracket 1, d\rrbracket,
\end{equation}
where $(r_{lq})_{lq} = R$. Differentiating the first equation in \eqref{eq:DV-M} with respect to $s$, replacing $V'$ by $VR$, left multiplying by $W^T$, and using that $W^T MV=D$ yields 
\begin{equation}\label{eq:WMV}
W^TM'V=[R,D]+D'.
\end{equation}
Focusing on the coefficient $(1,2)$ in the above equation and taking into account \eqref{eq:w1v2} and the fact that the eigenvalues are distinct, one deduces that 
$r_{12}\equiv 0$.

When $d=2$, using \eqref{eq:vT-vprime} and \eqref{eq:Rr} for $l = 2$ and the above, we deduce that $v_2$ is a constant vector, and hence the line supported by $v_2$ is invariant by every matrix of $A$, contradicting the irreducibility assumption.

If $d=3$, we further differentiate \eqref{eq:WMV} to deduce that
\[ [W^TM' V,R]=[R',D]+[R,D']+D'',\]
where we have used the fact that $M''\equiv 0$ and the relation $W'=-WR^T$. Plugging again \eqref{eq:WMV}, we obtain that 
\[[R,[D,R]]=[R',D]+2[R,D']+D''.\]
Again, considering only the coefficient $(1,2)$ in the above equation, we deduce that $r_{13}r_{32}\equiv 0$. If $r_{13}$ is not identically equal to zero, then there exists an open subset of $S$ where $r_{32} \equiv 0$. Using \eqref{eq:Rr} for $l = 2$ and the above, we deduce that $v_2$ is a constant vector and we have a contradiction as previously. Assume now that $r_{13} \equiv 0$. Using again that $W'=-WR^T$, we have that $w_1^\prime = - r_{11} w_1$. We deduce that the line spanned by $w_1(0)$ is invariant by the matrices $A_1^T,\dotsc,A_N^T$, contradicting again the irreducibility of $A$. That concludes the proof of the lemma.\hfill$\Box$

\bibliographystyle{abbrv}
\bibliography{Bib}

\end{document}